\documentclass[11pt]{amsart}

\usepackage{amsmath,amssymb,amsthm,amsfonts}
\usepackage{mathrsfs}
\usepackage{enumitem}
\usepackage[colorlinks=true,citecolor=blue,linkcolor=blue,urlcolor=blue]{hyperref}
\usepackage{longtable}
\usepackage{booktabs}   
\newtheorem{theorem}{Theorem}[section]

\newtheorem{proposition}[theorem]{Proposition}
\newtheorem{lemma}[theorem]{Lemma}
\newtheorem{corollary}[theorem]{Corollary}

\theoremstyle{definition}
\newtheorem{definition}[theorem]{Definition}
\newtheorem{remark}[theorem]{Remark}

\numberwithin{equation}{section}

\title{Necessary Conditions for Single-Critical-Point\\ Higher-Order Szeg\H{o} Sum Rules in OPUC}
\author{Daxiong Piao}

\address{School of Mathematical Sciences, Ocean University of China, Qingdao 266100, China}
\email{dxpiao@ouc.edu.cn}
\keywords{Orthogonal polynomials on the unit circle; Verblunsky coefficients; Szeg\H{o} theorem;  Higher-order sum rules;  Simon-Lukic conjecture;  Algebraic models.
}
\thanks{2020 \emph{Mathematics Subject Classification}: Primary 42C05; Secondary 47B36, 30C15.}
\date{\today}
\begin{document}

\begin{abstract}
We prove the necessity part of the higher-order Szeg\H{o} theorem on the
unit circle for the single-critical-point weights
$H_m(e^{i\theta})=(1-\cos\theta)^m$, $m\ge1$.  If
$\{\alpha_n\}_{n\ge0}$ are the Verblunsky coefficients of a nontrivial
probability measure
$d\mu=w(\theta)d\theta/(2\pi)+d\mu_{\mathrm s}$, then the weighted
Szeg\H{o} condition
$\int_0^{2\pi}
        (1-\cos\theta)^m\log w(\theta)\frac{d\theta}{2\pi}>-\infty$
implies
$\Delta^m\alpha\in\ell^2,
        \,\,
        \alpha\in\ell^{2m+2}.$

The proof uses a finite-volume version of Yan's higher-order sum rule.  The
quadratic part yields the $m$-th difference energy, and the logarithmic
tail yields the $\ell^{2m+2}$-control.  The non-sign-definite critical
terms are treated in two steps.  First, the quartic principal critical block
is isolated using the Yan quotient-algebra normal representative and shown
to have a positive semidefinite Gram representation.  Second, the remaining
non-principal critical terms are controlled by the diagonal-vanishing
property
$\mathcal Y_{k,\mathrm{crit}}^{(m)}
        \in
        \mathfrak I_k^{\,m+1-k},
        \,\, 2\le k\le m,$
together with the Breuer--Simon--Zeitouni normal form, discrete
interpolation, and Young's inequality.  These estimates yield a uniform
finite-volume coercive bound, from which the necessity theorem follows for
all $m\ge1$.
\end{abstract}

\maketitle

\tableofcontents

\section{Introduction}
\label{sec:introduction}

Let $\mu$ be a nontrivial probability measure on the unit circle
$\partial\mathbb D$, and write its Lebesgue decomposition as
$d\mu = w(\theta)\frac{d\theta}{2\pi}  +  d\mu_{\mathrm s}.$
Let
$\{\alpha_n\}_{n=0}^{\infty}$
be the corresponding Verblunsky coefficients. The classical Szeg\H{o}
theorem relates the integrability of $\log w$ to square summability of the
Verblunsky coefficients. More precisely,
$\int_0^{2\pi}
        \log w(\theta)\frac{d\theta}{2\pi}
        >
        -\infty
        \,\,\Longleftrightarrow\,\,
        \sum_{n=0}^{\infty}|\alpha_n|^2<\infty.$
This  is one of the central theorems in the theory of
orthogonal polynomials on the unit circle(OPUC); see Szeg\H{o}'s classical book
and Simon's monographs
\cite{Szego1939,SimonOPUC1,SimonOPUC2,SimonSzego}.

Higher-order Szeg\H{o} theorems refine the classical theorem by inserting
weights that vanish at prescribed points of the unit circle. Such results
are naturally formulated as sum rules: a weighted logarithmic integral on
the spectral side is related to summability and difference conditions on the
Verblunsky coefficients. In this paper we focus on the single-critical-point
weight
$H_m(e^{i\theta})
        =
        (1-\cos\theta)^m,
        \,\, m\ge1.$
Equivalently, with $z=e^{i\theta}$,
$H_m(z)
        =
        2^{-m}(1-z)^m(1-z^{-1})^m,$
so that $H_m$ has a zero of order $2m$ at the point $z=1$. Throughout the paper we use the nonnegative sign convention
$$\mathcal K_m(\mu)
        :=
        \int_0^{2\pi}
        (1-\cos\theta)^m
        \log\frac1{w(\theta)}
        \frac{d\theta}{2\pi}.$$
Thus $\mathcal K_m(\mu)<\infty$ is the weighted Szeg\H{o} condition
$$\int_0^{2\pi}
        (1-\cos\theta)^m
        \log w(\theta)\frac{d\theta}{2\pi}
        >
        -\infty.$$
The coefficient-side conditions predicted by the higher-order theory are
$\Delta^m\alpha\in\ell^2,
        \,\,
        \alpha\in\ell^{2m+2},$
where $\Delta\alpha_n=\alpha_{n+1}-\alpha_n$.

There is a substantial literature on higher-order Szeg\H{o} sum rules and
on the coefficient-side conditions associated with them. Simon proposed
natural coefficient-side conditions for polynomial weights with finitely many
critical points; see also the discussion in Simon's books
\cite{SimonOPUC2,SimonSzego}. Simon and Zlato\u{s} verified these
conditions in several low-order cases, including the case of two singular
points \cite{SimonZlatos2005}. Golinskii and Zlato\u
{s} proved further
important cases of higher-order Szeg\H{o} theorems, including results under
additional a priori summability assumptions such as $\alpha\in\ell^4$
\cite{GZ2007}. Related asymptotic questions for orthogonal polynomials in
Szeg\H{o} classes with polynomial weights were also studied by Denisov and
Kupin \cite{DenisovKupin2006}.

Lukic later showed that Simon's original coefficient-side conditions are not
correct in full generality and formulated a replacement set of conditions,
now known as Lukic's conditions \cite{Lukic2014}. In a convenient form, if
the critical points are
$e^{i\theta_1},\ldots,e^{i\theta_K}$
with multiplicities
$m_1,\ldots,m_K,$
then Lukic's conditions involve the global difference condition
$\prod_{j=1}^K (S-e^{-i\theta_j})^{m_j}\alpha\in\ell^2$
together with local higher-integrability conditions near each critical
point. For a single critical point, these conditions reduce precisely to
$(S-e^{-i\theta_1})^{m_1}\alpha\in\ell^2,
        \,\,
        \alpha\in\ell^{2m_1+2}.$
Lukic also proved supporting results in the one-critical-point case; in
particular, under the assumption
$(S-e^{-i\theta_1})\alpha\in\ell^2,$
he showed that the corresponding higher-order Szeg\H{o} condition is
equivalent to
$\alpha\in\ell^{2m_1+2}$
\cite{Lukic2016}. Thus, for the one-point weight considered in the present
paper, Lukic's conditions are exactly
$\Delta^m\alpha\in\ell^2,
        \,\,
        \alpha\in\ell^{2m+2}.$

Another important line of work connects sum rules with large deviation
principles. Large-deviation methods for spectral measures and sum rules were
developed in several settings by Gamboa, Nagel, and Rouault
\cite{GamboaNagelRouault2016,GamboaNagelRouault2017,GamboaNagelRouaultMatrix}.
This approach is part of a broader circle of ideas relating random matrix
theory, spectral measures, and large deviations
\cite{AndersonGuionnetZeitouni2010,DemboZeitouni1998,DeuschelStroock1989}.
Breuer, Simon, and Zeitouni(BSZ) developed this perspective further for
higher-order OPUC sum rules and the Lukic conjecture
\cite{BSZ-Duke2018,BSZ-pedagogical2018}. They derived general finite-volume
sum-rule identities for polynomial weights and showed that the main
difficulty is not merely the existence of a sum rule, but rather the
analysis of the complicated coefficient-side expressions in terms of
Verblunsky coefficients. They also verified Lukic's conjecture in several
important cases, including cases in which Simon's original conditions and
Lukic's replacement conditions differ.

Analogous questions for Jacobi matrices have also played an important role
in the development of sum-rule methods. The Killip--Simon theorem is the
central example \cite{KillipSimon2003}, and generalized sum rules and
related coefficient conditions were studied in
\cite{NazarovPeherstorferVolbergYuditskii2005,Kupin2004,LaptevNabokoSafronov2003}.
Although the present paper concerns OPUC, this literature provides an
important parallel perspective on the relation between spectral integrals
and coefficient-side coercivity.

We also mention related probabilistic developments.  Gamboa, Nagel
and Rouault derived sum rules and spectral gems for polynomial potentials
on the unit circle by comparing spectral and coefficient large deviation
principles, including ungapped, gapped, and multi-cut regimes
\cite{GamboaNagelRouault2025}.  The present paper is independent of this
large-deviation approach and instead uses the finite-volume algebraic
formalism of Yan together with the BSZ normal-form reduction.

Building on the general sum-rule framework of BSZ and
motivated by the difficulty of analyzing the coefficient-side expressions
directly, Yan introduced an algebraic model for higher-order sum rules on
the unit circle \cite{Yan2018}. Yan's model relates the finite-volume
sum-rule expressions to explicit algebraic polynomials, including
Hall--Littlewood type polynomials; for background on Hall--Littlewood
polynomials, see Macdonald's book \cite{Macdonald1998}. This algebraic
model provides a systematic way to organize the homogeneous terms on the
Verblunsky side. The algebraic finite-volume sum-rule formalism used in the
present paper is based on this approach.

A key precursor to the present work is Yan's sufficiency theorem. In
\cite{Yan2018}, Yan used the algebraic model to prove the sufficiency part
of Lukic's conjecture in the case of a single critical point with arbitrary
multiplicity. After translating Yan's normalization to the convention used here, his
Theorem~1.14 implies that, for
$H_m(e^{i\theta})=(1-\cos\theta)^m$,
$\Delta^m\alpha\in\ell^2,
        \,\,
        \alpha\in\ell^{2m+2}
        \,\,\Longrightarrow\,\,
        \mathcal K_m(\mu)<\infty.$
Thus Yan's theorem gives the
sufficiency direction in the single-critical-point, arbitrary-order case.

The purpose of the present paper is to prove the converse implication. Our
main theorem shows that finiteness of the order-$m$ spectral side forces
$\Delta^m\alpha\in\ell^2,
        \,\,
        \alpha\in\ell^{2m+2}.$
Equivalently,
$\sum_{n=0}^{\infty}|\Delta^m\alpha_n|^2<\infty,
        \,\,
        \sum_{n=0}^{\infty}|\alpha_n|^{2m+2}<\infty.$
Combining this necessity theorem with Yan's sufficiency theorem
\cite{Yan2018} yields the full necessary-and-sufficient characterization
$\mathcal K_m(\mu)<\infty
        \,\,\Longleftrightarrow\,\,
        \Delta^m\alpha\in\ell^2
        \ \text{ and }\
        \alpha\in\ell^{2m+2},$
for every $m\ge1$ in the single-critical-point case.

This result also realizes a possibility explicitly suggested by Yan. At the
end of the introduction of \cite{Yan2018}, Yan observed that, although his
paper focused on the sufficiency of Lukic's conditions, the algebra model
also had the potential to prove necessity. Roughly speaking, Yan pointed out
that if one could decompose the algebraic degree $2k$ components into terms
whose images under the coefficient map are summable, and if that summability
were equivalent to Lukic's conditions, then the necessity of Lukic's
conditions would follow. The present paper confirms this perspective in the
single-critical-point case. The additional ingredient needed here is that
Yan's algebraic structure must be combined with the BSZ normal form method \cite{BSZ-Duke2018,BSZ-pedagogical2018} and a discrete
Gagliardo--Nirenberg interpolation argument, in the spirit of the classical
interpolation inequalities of Gagliardo and Nirenberg
\cite{Gagliardo1958,Nirenberg1959}.

We now describe the mechanism of the proof. Yan's Hall--Littlewood type
representation identifies the homogeneous degree $2k$ pieces of the
finite-volume sum rule with explicit algebraic polynomials. For the
single-critical-point weights $H_m$, we prove that the critical degree $2k$ component
vanishes to order at least $m+1-k$ on the diagonal:
$\mathcal Y_{k,\mathrm{crit}}^{(m)}
        \in
        \mathfrak I_k^{\,m+1-k},
        \,\, 2\le k\le m.$
 Here $\mathfrak I_k$ denotes the ideal of polynomials vanishing at the
distinguished diagonal point where all shift variables are equal to $1$.
This diagonal vanishing provides the algebraic source of the required
difference count. After applying the coefficient map and passing to the
BSZ normal form, the corresponding local expressions
contain the required discrete differences.  There is, however, one quartic
principal critical block which must be treated separately.  The raw quartic
representative does not in general have a positive semidefinite Gram
representative.  We therefore first replace it by the Yan quotient-algebra
normal representative; its lowest critical homogeneous part gives an
explicit positive semidefinite quartic block.  After this block is separated,
the remaining non-principal critical terms are absorbable by
$\sum_n |\Delta^m\alpha_n|^2
        \,\,\text{and}\,\,
        \sum_n |\alpha_n|^{2m+2}.$
This positive-semidefinite-block plus absorption mechanism is the key step
in deriving the coercive finite-volume estimate.

Let us explain the two positive quantities. The quadratic part of the coefficient-side expansion produces the
difference energy. Since
$1-\cos\theta
        =
        \frac12|1-e^{i\theta}|^2,$
we have
$H_m(e^{i\theta})
        =
        2^{-m}|1-e^{i\theta}|^{2m}.$
Thus, at the level of Fourier symbols, the quadratic contribution is
associated with
$2^{-m}(1-P)^m(1-P^{-1})^m,$
where $P$ denotes the shift operator. As a quadratic form this gives
$2^{-m}\sum_n|\Delta^m\alpha_n|^2,$
up to finite-section boundary terms. This is the coercive source of the
condition $\Delta^m\alpha\in\ell^2$.

The second positive term comes from the logarithmic part of the finite-volume
sum rule. With the normalization used below, the degree $2k$ contribution in Yan's
homogeneous expansion contains a universal constant part equal to $-1/k$.
 After applying the
coefficient map, this contributes
$-\frac1k|\alpha_n|^{2k}.$
Therefore the logarithmic term combines with these constants as
$\log\frac1{1-|\alpha_n|^2}
        -
        \sum_{k=1}^{m}\frac{|\alpha_n|^{2k}}{k}.$
Since $0\le |\alpha_n|<1$, the Taylor expansion of
$\log(1/(1-x))$ gives
$$\log\frac1{1-|\alpha_n|^2}
        -
        \sum_{k=1}^{m}\frac{|\alpha_n|^{2k}}{k}
        =
        \sum_{j=m+1}^{\infty}
        \frac{|\alpha_n|^{2j}}{j}
        \ge
        \frac1{m+1}|\alpha_n|^{2m+2}.$$
This is the coercive source of the condition
$\alpha\in\ell^{2m+2}$.

The main difficulty is therefore the control of the remaining critical
terms.  These terms are not manifestly sign-definite.  In low orders they
can be handled by explicit calculation, but such computations quickly become
impractical as $m$ increases.  The contribution of this paper is to
replace the low-order cancellations by a structural argument.  The quartic
principal critical obstruction is handled by first passing to the Yan
quotient-algebra normal representative; the resulting lowest critical
homogeneous part admits an explicit positive semidefinite Gram
representation.  After this positive semidefinite block is separated, the
remaining non-principal critical terms are treated by the
BSZ normal form argument. More precisely, the diagonal-vanishing statement above implies, through the
BSZ normal form, that every remaining non-principal
critical monomial of degree $2k$, $2\le k\le m$, can be written, modulo
bounded telescoping and finite-section boundary terms, as a finite linear
combination of expressions of the form
$\prod_{\nu=1}^{2k}
        \bigl(\Delta^{r_\nu}\gamma_\nu\bigr)_{n+\ell_\nu},
        \,\,
        \gamma_\nu\in\{\alpha,\overline{\alpha}\},$
with bounded shifts $\ell_\nu$ and with total difference count satisfying
$\sum_{\nu=1}^{2k} r_\nu
        \ge
        m+1-k.$
This normal form replaces the explicit cancellations visible in the low
orders $m=1,2,3$.

The analytic input is an infinitesimal interpolation estimate for these
normal-form monomials. For every $\varepsilon>0$, each admissible
higher-order monomial $T_n$ satisfies
$\sum_{n=0}^{N}|T_n|
        \le
        \varepsilon
        \sum_{n=0}^{N+L_m}|\Delta^m\alpha_n|^2
        +
        \varepsilon
        \sum_{n=0}^{N+L_m}|\alpha_n|^{2m+2}
        +
        C_{\varepsilon,m}.$
Thus all remaining non-principal critical terms are infinitesimally
form-bounded with respect to the two coercive quantities.  The quartic
principal block is not absorbed; instead it is kept as a nonnegative
contribution after passing to the Yan quotient-algebra representative.  The
infinitesimal nature of the absorption estimate is essential.  Since the
logarithmic tail carries a fixed positive coefficient, the non-principal
critical terms must be absorbed with an arbitrarily small relative constant.

Combining the quadratic coercivity, the logarithmic tail, the positive
semidefinite quartic principal block, and the absorption of the remaining
non-principal critical terms yields the finite-volume coercive estimate
$$\mathcal K_m^{(N)}(\mu)
        \ge
        c_m
        \sum_{n=0}^{N}|\Delta^m\alpha_n|^2
        +
        c_m
        \sum_{n=0}^{N}|\alpha_n|^{2m+2}
        -
        C_m,$$
where $c_m>0$ and $C_m<\infty$ are independent of $N$.  Here
$\mathcal K_m^{(N)}(\mu)$ denotes the finite-volume spectral functional in
Yan's truncated sum rule, with the sign convention used in this paper. In Yan's finite-volume formulation, finiteness of
$\mathcal K_m(\mu)$ gives a uniform upper bound on the truncated spectral
quantities $\mathcal K_m^{(N)}(\mu)$. The coercive estimate
therefore gives
$$\sup_N
        \sum_{n=0}^{N}|\Delta^m\alpha_n|^2<\infty,
        \,\,
        \sup_N
        \sum_{n=0}^{N}|\alpha_n|^{2m+2}<\infty.$$
Passing to the limit $N\to\infty$ yields the desired coefficient-side
conditions.

We now state the main theorem.

\begin{theorem}[Arbitrary-\texorpdfstring{$m$}{m} necessity theorem]
\label{thm:intro-main-necessity}
Let $m\ge1$, and let $\mu$ be a nontrivial probability measure on
$\partial\mathbb D$ with Lebesgue decomposition
$d\mu
        =
        w(\theta)\frac{d\theta}{2\pi}
        +
        d\mu_{\mathrm s}.$
Let $\{\alpha_n\}_{n\ge0}$ be its Verblunsky coefficients. If
\begin{equation}
\label{eq:intro-weighted-szego-condition}
        \int_0^{2\pi}
        (1-\cos\theta)^m
        \log w(\theta)\frac{d\theta}{2\pi}
        >
        -\infty \,\, i.e. \,\, \mathcal K_m(\mu)< \infty,
\end{equation}
then
$\Delta^m\alpha\in\ell^2,
        \,\,
        \alpha\in\ell^{2m+2}.$
\end{theorem}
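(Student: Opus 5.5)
The plan follows the mechanism described in the introduction. Everything is reduced to a uniform finite-volume coercive estimate, which is then passed to the limit $N\to\infty$.

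\emph{Step 1: the finite-volume sum rule.} I would start from Yan's truncated higher-order sum rule for $H_m$. For each $N$ it writes the finite-volume spectral functional $\mathcal K_m^{(N)}(\mu)$ as
\[
\sum_{n=0}^{N}\Big(\log\frac1{1-|\alpha_n|^2}-\sum_{k=1}^m\frac{|\alpha_n|^{2k}}{k}\Big)
+Q_N(\alpha)+\sum_{k=2}^m R_{k,N}(\alpha)+B_N.
\]
Here $Q_N$ is the quadratic form with symbol $2^{-m}(1-P)^m(1-P^{-1})^m$. The $R_{k,N}$ are the images under the coefficient map of the critical degree-$2k$ components $\mathcal Y^{(m)}_{k,\mathrm{crit}}$. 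The term $B_N$ collects boundary and telescoping terms; it is bounded uniformly in $N$ because $|\alpha_n|<1$ and the shifts are bounded.

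I would use two standard facts about this rule: lower semicontinuity and monotonicity of the relative-entropy-type spectral side. Together they give $\sup_N\mathcal K_m^{(N)}(\mu)\le C(\mathcal K_m(\mu)+1)<\infty$ whenever $\mathcal K_m(\mu)<\infty$.

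The first two pieces are then straightforward.
\begin{itemize}
\item By the Taylor expansion, the logarithmic piece is at least $\frac1{m+1}\sum_{n\le N}|\alpha_n|^{2m+2}$.
\item Up to boundary terms, $Q_N$ equals $2^{-m}\sum_{n\le N}|\Delta^m\alpha_n|^2$.
\end{itemize}

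\emph{Step 2: the critical terms.} I would split off the quartic principal block ($k=2$, lowest critical homogeneous part). I would replace its raw representative by the Yan quotient-algebra normal representative. This changes it only by elements of the relevant ideal, and those map to telescoping sums. I would then exhibit a positive semidefinite Gram matrix for it, so the block is $\ge 0$ and can be dropped from below.

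For every remaining non-principal term, I would invoke the diagonal-vanishing property $\mathcal Y^{(m)}_{k,\mathrm{crit}}\in\mathfrak I_k^{\,m+1-k}$. Through the BSZ normal form, each monomial becomes a product $\prod_\nu(\Delta^{r_\nu}\gamma_\nu)_{n+\ell_\nu}$ with $\sum r_\nu\ge m+1-k$, modulo telescoping.

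\emph{Step 3: infinitesimal absorption (main obstacle).} I need $\sum_n|T_n|\le\varepsilon(\|\Delta^m\alpha\|^2+\|\alpha\|_{2m+2}^{2m+2})+C_\varepsilon$ on finite sections. The plan is as follows.
\begin{itemize}
\item Apply H\"older's inequality with exponents $p_\nu$ adapted to each factor.
\item Bound $\|\Delta^{r}\alpha\|_{p}$ by the discrete Gagliardo--Nirenberg inequality $\|\Delta^r\alpha\|_p\lesssim\|\Delta^m\alpha\|_2^{\theta}\|\alpha\|_{2m+2}^{1-\theta}$, using $\|\Delta^r\alpha\|_\infty\le 2^r$ to handle endpoints.
\item Check the scaling. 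Assign weight $1$ to $\|\Delta^m\alpha\|_2^2$ and to $\|\alpha\|_{2m+2}^{2m+2}$. The monomial then has total weight strictly less than $1$ precisely when the difference count exceeds the critical threshold, which is where $\sum r_\nu\ge m+1-k$ and $k\le m$ enter.
\item With subcritical total weight, Young's inequality gives the $\varepsilon$-form with a constant $C_\varepsilon$ independent of $N$.
\end{itemize}
I expect the exponent bookkeeping to be the delicate part, in particular the borderline cases with equality in the difference count. If a borderline case is genuinely critical rather than subcritical, I would check whether it is in fact part of the principal quartic block already handled in Step 2.

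\emph{Step 4: conclusion.} Choose $\varepsilon$ small relative to $\frac1{m+1}$ and $2^{-m}$. This gives $\mathcal K_m^{(N)}\ge c_m\sum_{n\le N}(|\Delta^m\alpha_n|^2+|\alpha_n|^{2m+2})-C_m$. Combined with the uniform upper bound from Step 1, the partial sums are bounded uniformly in $N$, so monotone convergence yields $\Delta^m\alpha\in\ell^2$ and $\alpha\in\ell^{2m+2}$.
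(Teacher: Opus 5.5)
Your outline follows the paper's argument step for step, and it fails exactly where you suspected. The infinitesimal absorption of Step 3 (the paper's absorption proposition for non-principal critical BSZ monomials) is false, not merely delicate.

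Test it on $\alpha_n=\delta e^{in\theta}$ with $0<\delta<1$. Every balanced local monomial is then constant in $n$, equal to $\delta^{2k}\mathcal P(e^{i\theta}\mathbf 1;e^{-i\theta}\mathbf 1)$, where $\mathcal P$ is its shift polynomial. The two coercive densities are $\delta^2|e^{i\theta}-1|^{2m}$ and $\delta^{2m+2}$. Divide $|\sum_{n\le N}T_n|\le\varepsilon(\cdots)+C_{\varepsilon,m}$ by $N$ and let $N\to\infty$. This gives $\delta^{2k}|\mathcal P(e^{i\theta}\mathbf 1;e^{-i\theta}\mathbf 1)|\le\varepsilon\bigl(\delta^2|e^{i\theta}-1|^{2m}+\delta^{2m+2}\bigr)$ for every $\varepsilon>0$. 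So no density that is nonzero on this curve satisfies your estimate, with any $N$-independent constants. For instance, take $m=3$, $k=2$ and the admissible monomial $|\alpha_n|^2|\Delta\alpha_n|^2$, whose difference count is $2=m+1-k$. Its density is $\delta^4|e^{i\theta}-1|^2$, which for $|e^{i\theta}-1|=\delta$ is $\delta^6$ against an energy density of $2\delta^8$.

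The bookkeeping error is as follows. With $p_r=2(m+1)/(r+1)$ one has $\sum_\nu 1/p_{r_\nu}=(\sigma+2k)/(2m+2)$, which is exactly your Young weight. H\"older for sums over $0\le n\le N$ requires this number to be $\ge1$; otherwise one loses a factor $(N+1)^{1-\sum_\nu 1/p_{r_\nu}}$, which the paper treats as a harmless constant. Young's $\varepsilon$-absorption requires it to be $<1$. It is $<1$ precisely when $\sigma<2(m+1-k)$, i.e.\ below the threshold, not above it.

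The missing ingredient is therefore structural, not a sharper inequality. The same test shows that the scale-critical difference count in degree $2k$ is $2(m+1-k)$. This gives $2m$ for the quadratic term and $0$ for $|\alpha_n|^{2m+2}$, and it is twice what $\mathfrak I_k^{\,m+1-k}$ supplies. An argument of this type would need three inputs your plan lacks. First, an algebraic statement that, modulo telescoping, the degree-$2k$ critical densities carry $2(m+1-k)$ effective differences, or have a sign where they carry fewer. Second, positivity rather than absorption for the parts with exactly that count, since they are comparable to the energies. This is needed for every $k$, so your fallback to the quartic block does not cover $k\ge3$. Third, an $\alpha$-dependent smallness input for the strictly supercritical parts. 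One candidate is $\alpha_n\to0$ by Rakhmanov's theorem, since the weighted Szeg\H{o} condition forces $w>0$ a.e.; the constants would then depend on $\mu$.

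Step 2 has a related pitfall. A PSD Gram matrix in linearized variables mixing the $x$- and $y$-sides, such as the paper's $Z=(X_1,X_2,Y_1)$, does not make the coefficient-side sum nonnegative. For example, $(X_1+Y_1)^2$ is a real square, yet for $\alpha=\delta(1,2,3,4,5,4,3,2,1)$ one gets $\sum_n[\phi_4((X_1+Y_1)^2)]_n=-84\delta^4$. Taking $K$ well-separated copies pushes this to $-\infty$. What survives the coefficient map is a Hermitian structure $\sum_j g_j(x)\,\overline{g_j}(y)$, modulo the translation relation $x_1x_2y_1y_2=1$.
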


Combining Theorem~\ref{thm:intro-main-necessity} with Yan's sufficiency
theorem \cite[Theorem 1.14]{Yan2018} gives the following full equivalence.

\begin{corollary}[Single-critical-point equivalence]
\label{cor:intro-equivalence}
Under the same assumption of  Theorem~\ref{thm:intro-main-necessity}, \eqref{eq:intro-weighted-szego-condition} holds if and only if
$\Delta^m\alpha\in\ell^2
        \,\,\text{and}\,\,
        \alpha\in\ell^{2m+2}.$
\end{corollary}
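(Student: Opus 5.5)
The statement to prove is Corollary~\ref{cor:intro-equivalence}, which is a direct combination of two one-directional results. The plan is simply to verify that both implications are available under exactly the hypotheses stated, with matching normalizations.

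\emph{Necessity.} Assume \eqref{eq:intro-weighted-szego-condition}, that is, $\mathcal K_m(\mu)<\infty$. Then Theorem~\ref{thm:intro-main-necessity} applies directly and gives $\Delta^m\alpha\in\ell^2$ and $\alpha\in\ell^{2m+2}$. Nothing beyond the hypotheses of that theorem is needed, since $\mu$ is a nontrivial probability measure with the stated Lebesgue decomposition.

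\emph{Sufficiency.} Assume $\Delta^m\alpha\in\ell^2$ and $\alpha\in\ell^{2m+2}$. We invoke Yan's theorem \cite[Theorem 1.14]{Yan2018}, which gives the finiteness of the weighted logarithmic integral for the single critical point $z=1$ of multiplicity $m$. The one point needing care is the change of conventions, so I will spell it out.

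1. Yan's weight is a constant multiple of $|1-z|^{2m}$. Since $(1-\cos\theta)^m=2^{-m}|1-e^{i\theta}|^{2m}$, finiteness of the weighted integral is unaffected by this positive constant.

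2. The critical point may be placed at $1$ or written via $(S-1)^m\alpha=\Delta^m\alpha$. Either way, Lukic's condition for a single point reduces to ours, as noted in the introduction.

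3. The weighted integral is finite in the $-\infty$ direction only if $\int (1-\cos\theta)^m\log w>-\infty$. This is one-sided: the upper bound holds automatically, because Jensen's inequality and $\int w\le1$ give $\int H_m\log w\le \|H_m\|_\infty\cdot\log$-type bounds, so $\log^+ w\cdot H_m$ is integrable.

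With these translations, Yan's conclusion is exactly $\mathcal K_m(\mu)<\infty$, which is \eqref{eq:intro-weighted-szego-condition}.

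The only genuine obstacle is therefore bookkeeping: confirming that the normalization of $H_m$ and the sign convention in Yan's statement match ours. Step 1 is a positive rescaling and step 3 follows from $\log^+ w\le w$. The remainder is the juxtaposition of the two implications.
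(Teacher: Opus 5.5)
Your proposal is correct and follows the paper's own argument: the corollary is obtained by combining Theorem~\ref{thm:intro-main-necessity} (necessity) with Yan's Theorem~1.14 (sufficiency), after the normalization translation $(1-\cos\theta)^m=2^{-m}|1-e^{i\theta}|^{2m}$ that the paper also mentions. Your added check that $\log^+ w\le w$ makes $\int H_m\log w$ well defined in $[-\infty,\infty)$ is valid, and the Jensen aside is not needed for it.
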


The organization of the paper is as follows. In
Section~\ref{sec:yan-finite-volume} we recall Yan's algebraic model and the
finite-volume form of the higher-order sum rule. In
Section~\ref{sec:remainder-telescoping} we introduce the local remainder
classes and the telescoping calculus used to control finite-section boundary
terms. In Section~\ref{sec:constant-log-tail} we isolate the constant part
of Yan's homogeneous expressions and prove the positivity of the logarithmic
tail. Section~\ref{sec:quadratic-coercivity} identifies the quadratic
critical term and proves coercivity of the $m$-th difference energy. In
Section~\ref{sec:recursive-diagonal-vanishing} we analyze the recursive
structure of the algebraic expressions and prove the diagonal vanishing of
the critical part. Section~\ref{sec:bsz-normal-form} converts this diagonal
vanishing into BSZ normal form.
In Section~\ref{sec:absorption-coercive-estimate} we isolate the positive
semidefinite quartic principal block, prove infinitesimal absorption
estimates for the remaining non-principal critical terms, and derive the
coercive finite-volume estimate. The proof of the arbitrary-$m$ necessity theorem is
given in Section~\ref{sec:proof-necessity-arbitrary-m}. For the reader's convenience, a notation index is provided at the end of the paper.

\section{Yan's algebraic model and finite-volume sum rules}
\label{sec:yan-finite-volume}

This section recalls the finite-volume form of the higher-order OPUC sum
rules used throughout the paper. The general higher-order sum-rule framework
is due to Breuer--Simon--Zeitouni
\cite{BSZ-Duke2018,BSZ-pedagogical2018}. The algebraic finite-volume
formalism used here is based on Yan's model \cite{Yan2018}; we recall only
the components needed later and specialize them to the single-critical-point
weights
\begin{equation}
\label{eq:yan-weight-Hm-theta}
        H_m(e^{i\theta})
        =
        (1-\cos\theta)^m,
        \,\, m\ge1.
\end{equation}
Our normalization is
\begin{equation}
\label{eq:yan-normalization-one-minus-cos}
        1-\cos\theta
        =
        \frac12 |1-e^{i\theta}|^2.
\end{equation}
Equivalently, with $z=e^{i\theta}$,
\begin{equation}
\label{eq:yan-weight-Hm-z}
        H_m(z)
        =
        2^{-m}(1-z)^m(1-z^{-1})^m.
\end{equation}

Let $\mu$ be a nontrivial probability measure on $\partial\mathbb D$, and
write its Lebesgue decomposition as
$ d\mu =  w(\theta)\frac{d\theta}{2\pi} + d\mu_{\mathrm s}(e^{i\theta}).$
Let $  \{\alpha_n\}_{n=0}^{\infty}\subset\mathbb D $ be the corresponding Verblunsky coefficients. We write $P$ for the forward
shift,$ (P\alpha)_n=\alpha_{n+1}$, and set
\begin{equation}
\label{eq:yan-delta-definition}
        \Delta=P-1,
        \,\,
        (\Delta\alpha)_n=\alpha_{n+1}-\alpha_n.
\end{equation}
Thus
\begin{equation}
\label{eq:yan-mth-difference}
        \Delta^m\alpha=(P-1)^m\alpha.
\end{equation}

Recall that $\mathcal K_m(\mu)$ denotes the infinite-volume weighted
Szeg\H{o} functional associated with $H_m$, as defined in the Introduction.
Here we recall the corresponding finite-volume quantities
$\mathcal K_m^{(N)}(\mu)$ that appear in Yan's truncated sum rule. All
finite-volume quantities below are taken with the same sign convention as
$\mathcal K_m(\mu)$. The only spectral consequence used later is the
one-sided bound
\begin{equation}
\label{eq:yan-finite-volume-boundedness-assumption}
        \sup_{N\ge0}
        \mathcal K_m^{(N)}(\mu)
        <
        \infty,
\end{equation}
which follows from the finiteness of $\mathcal K_m(\mu)$.

\subsection{Local expressions and the coefficient map}
\label{subsec:yan-local-expressions}

Yan's algebraic model represents the coefficient side of the finite-volume
sum rule by local homogeneous expressions in the Verblunsky coefficients. We
recall the notation in a form adapted to the present argument.

A local balanced monomial of degree $2k$ is an expression of the form
\begin{equation}
\label{eq:yan-local-balanced-monomial}
        \prod_{\nu=1}^{k}\alpha_{n+i_\nu}
        \prod_{\mu=1}^{k}\overline{\alpha}_{n+j_\mu},
\end{equation}
where the shifts $i_\nu,j_\mu$ belong to a finite set whose size and range
are bounded in terms of $m$ only. Equivalently, using the shift $P$, one may encode such monomials by
formal shift variables acting on $\alpha_n$ and
$\overline{\alpha}_n$. The precise indexing convention is not important for
the estimates in later sections; what matters is that, for fixed $m$, only
finitely many shifts occur.

In Yan's notation, the algebraic $2k$-homogeneous expressions are
polynomials in shift variables
$x_1,\ldots,x_k,$ $y_1,\ldots,y_k,$
where the $x$-variables act on the $k$ unbarred factors and the
$y$-variables act on the $k$ barred factors. The coefficient map
$\phi_{2k}$ sends a monomial
$x_1^{i_1}\cdots x_k^{i_k}y_1^{j_1}\cdots y_k^{j_k}$
to
$\prod_{\nu=1}^{k}\alpha_{n+i_\nu}
        \prod_{\mu=1}^{k}\overline{\alpha}_{n+j_\mu}.$
 Thus, if
$\mathcal Y_k^{(m)}$ denotes the algebraic $2k$-homogeneous expression
associated with the weight $H_m$, then $ \bigl[\phi_{2k}(\mathcal Y_k^{(m)})\bigr]_n$ is a finite linear combination of monomials of the form
\eqref{eq:yan-local-balanced-monomial}. The expressions are balanced: every $2k$-homogeneous term contains $k$ factors of $\alpha$ and $k$
factors of $\overline{\alpha}$.

For fixed $m$, Yan's non-logarithmic coefficient expression has homogeneous
pieces of degrees
$2,4,\ldots,2m.$
We write
\begin{equation}
\label{eq:yan-homogeneous-decomposition}
        A_{m,n}
        =
        \sum_{k=1}^{m}
        A_{m,n}^{(2k)},
\end{equation}
where
\begin{equation}
\label{eq:yan-homogeneous-piece-definition}
        A_{m,n}^{(2k)}
        :=
        \bigl[\phi_{2k}(\mathcal Y_k^{(m)})\bigr]_n.
\end{equation}
The quadratic term $A_{m,n}^{(2)}$ will later produce the positive
$m$-th difference energy, while the higher terms
$A_{m,n}^{(4)},\ldots,A_{m,n}^{(2m)}$
will be converted into normal form and absorbed.

\subsection{The finite-volume sum rule}
\label{subsec:yan-finite-volume-sum-rule}

The following is the finite-volume form of Yan's sum rule  that will be used
in this paper.

\begin{theorem}[Yan finite-volume sum rule, single-critical-point case]
\label{thm:yan-finite-volume-sum-rule}
Let $m\ge1$, and let
$H_m(e^{i\theta})=(1-\cos\theta)^m.$
There exist finite-volume spectral functionals
$\mathcal K_m^{(N)}(\mu),
        \,\, N\ge0,$
and finite-section boundary terms
$\mathcal B_m^{(N)}(\alpha),$
such that
\begin{equation}
\label{eq:yan-finite-volume-sum-rule}
        \mathcal K_m^{(N)}(\mu)
        =
        \sum_{n=0}^{N}
        \log\frac1{1-|\alpha_n|^2}
        +
        \sum_{k=1}^{m}
        \sum_{n=0}^{N}
        \bigl[\phi_{2k}(\mathcal Y_k^{(m)})\bigr]_n
        +
        \mathcal B_m^{(N)}(\alpha).
\end{equation}
Moreover, the boundary terms are uniformly bounded:
\begin{equation}
\label{eq:yan-boundary-uniform}
        \sup_{N\ge0}
        |\mathcal B_m^{(N)}(\alpha)|
        \le
        C_m .
\end{equation}
Here $C_m<\infty$ is independent of $N$. This follows from the fact that
$\mathcal B_m^{(N)}$ contains only finitely many endpoint local monomials,
with shifts bounded in terms of $m$, and $|\alpha_n|\le1$.

Finally, finiteness of the infinite-volume spectral side implies the
one-sided finite-volume bound
\begin{equation}
\label{eq:yan-finite-volume-upper-bound}
        \sup_{N\ge0}
        \mathcal K_m^{(N)}(\mu)
        <
        \infty.
\end{equation}
\end{theorem}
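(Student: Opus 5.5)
The statement to be proved is Theorem~\ref{thm:yan-finite-volume-sum-rule}, which is essentially a repackaging of the Breuer--Simon--Zeitouni step-by-step sum rule in Yan's algebraic language. I would derive it in three steps: the exact finite-volume identity, the boundedness of the boundary terms, and the one-sided upper bound on the spectral side.

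\textbf{Step 1: the finite-volume identity.} Let $\mu_N$ be the measure whose Verblunsky coefficients are $\alpha_0,\dots,\alpha_N,0,0,\dots$ (Bernstein--Szeg\H{o} truncation). Alternatively, use the Geronimus step-by-step relation between $\mu$ and the measure $\mu^{(N+1)}$ with coefficients $\alpha_{N+1},\alpha_{N+2},\dots$. Then:
\begin{itemize}
\item Define $\mathcal K_m^{(N)}(\mu)$ as the weighted relative entropy of the appropriate finite-volume object, for instance $\int H_m\log(1/w)\,d\theta/2\pi$ minus the same functional for $\mu^{(N+1)}$. Equivalently, it can be written through $\log|D|$ evaluated against the Fourier coefficients of $H_m$.
\item Expand $\log$ of the Szeg\H{o} function using Szeg\H{o} recursion and the Taylor coefficients of $\log$ of the Carath\'eodory/Schur data; this is the BSZ computation.
\item The term $\sum_n\log(1-|\alpha_n|^2)^{-1}$ comes from $\prod_n\rho_n^2$.
\item Pairing the remaining series with the finitely many Fourier modes $e^{ij\theta}$, $|j|\le m$, of $H_m$ gives finitely many local polynomials of degree at most $2m$ in the $\alpha$'s.
\item Yan's Hall--Littlewood identification identifies the degree-$2k$ piece with $\phi_{2k}(\mathcal Y_k^{(m)})$, summed over $n$.
\end{itemize}
Everything that is not of the form $\sum_{n\le N}[\cdot]_n$ (the shifted windows near $n=0$ and $n=N$) is collected into $\mathcal B_m^{(N)}$.

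\textbf{Step 2: bounded boundary terms.} I would show that $\mathcal B_m^{(N)}$ is a sum, with coefficients independent of $N$, of at most $C(m)$ local monomials evaluated at indices in $[0,L_m]\cup[N-L_m,N+L_m]$. Since $|\alpha_n|<1$, each such monomial has modulus at most $1$, which gives \eqref{eq:yan-boundary-uniform}. The only point needing care is that the truncation of the power series must occur at degree $2m$. This holds because $H_m$ is a trigonometric polynomial of degree $m$, so only the coefficients $c_j$ with $|j|\le m$ enter. Degrees higher than $2m$ collapse into the logarithm term exactly, as in BSZ's step-by-step sum rule.

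\textbf{Step 3: the one-sided bound.} I would use the lower semicontinuity of the entropy: the weighted entropy of $\mu^{(N+1)}$ relative to $H_m\,d\theta$ is nonnegative up to a constant depending only on $\int H_m$. By Jensen, $\int H_m\log w^{(N+1)}\le \|H_m\|_1\log(\int H_m w/\|H_m\|_1)$, which is bounded since $H_m\le 2^m$ and $\mu^{(N+1)}$ is a probability measure. Hence $\mathcal K_m^{(N)}(\mu)\le\mathcal K_m(\mu)+C$, giving \eqref{eq:yan-finite-volume-upper-bound}.

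I expect the main obstacle to be Step 1, specifically matching BSZ's coefficient expression term by term with Yan's $\phi_{2k}(\mathcal Y_k^{(m)})$ under the normalization \eqref{eq:yan-normalization-one-minus-cos}. I would rely on Yan's Theorem giving this identification and only translate constants, checking the normalization via the quadratic term $2^{-m}|\Delta^m\alpha_n|^2$.
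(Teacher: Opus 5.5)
Your route is the paper's. The paper gives no independent derivation: it takes the identity from the BSZ higher-order sum rules in Yan's algebraic form, bounds $\mathcal B_m^{(N)}$ because it consists of finitely many endpoint monomials with $|\alpha_n|<1$, and simply asserts the one-sided bound. Your Step 3 supplies that last assertion correctly, but it uses only Jensen's inequality applied to the tail measure $\mu^{(N+1)}$. Lower semicontinuity of the entropy points the other way: for Bernstein--Szeg\H{o} truncations it gives $\mathcal K_m(\mu)\le\liminf_N\mathcal K_m(\mu_N)$. So the Bernstein--Szeg\H{o} option in Step 1 should be dropped, and the tail version is the one to keep.

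There is one step that fails as written: the bullet claiming that $\sum_n\log\frac{1}{1-|\alpha_n|^2}$ enters with coefficient $1$.

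\begin{itemize}
\item Let $f,f_1$ be the Schur functions of $\mu$ and of the once-stripped measure $\mu_1$. The Schur/Geronimus relation gives, a.e., $w/w_1=(1-|\alpha_0|^2)\,|1-zf_1|^2/\bigl(|1+\overline{\alpha}_0zf_1|^2\,|1-zf|^2\bigr)$.
\item The last three factors are outer and equal $1$ at $z=0$. Hence $\log(1-|\alpha_0|^2)$ is exactly the zero Fourier mode of $\log(w/w_1)$, and it is integrated against $H_m$.
\item With your unnormalized definition $\mathcal K_m^{(N)}(\mu)=\mathcal K_m(\mu)-\mathcal K_m(\mu^{(N+1)})$, the logarithm therefore carries the coefficient $\int_0^{2\pi}H_m\frac{d\theta}{2\pi}=2^{-m}\binom{2m}{m}$. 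This equals $1$ only for $m=1$; for $m=2$ it is $3/2$.
\item The discrepancy contains the whole tail $\sum_{j>m}|\alpha_n|^{2j}/j$. It cannot be absorbed into polynomials of degree $\le 2m$, nor into uniformly bounded endpoint terms.
\end{itemize}

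The fix is to divide your $\mathcal K_m^{(N)}$ by $2^{-m}\binom{2m}{m}$ and to normalize the $\mathcal Y_k^{(m)}$ the same way. Steps 2 and 3 survive this rescaling unchanged. After rescaling, however, the bulk quadratic term is $\binom{2m}{m}^{-1}\sum_n|\Delta^m\alpha_n|^2$. So for $m\ge2$ your proposed normalization check against $2^{-m}|\Delta^m\alpha_n|^2$ cannot hold together with coefficient $1$ on the logarithm. This is harmless downstream, since only the positivity of these constants is used later.
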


This theorem is the finite-volume consequence of the higher-order OPUC sum
rules of Breuer--Simon--Zeitouni \cite{BSZ-Duke2018,BSZ-pedagogical2018}, written in
the algebraic form developed by Yan \cite{Yan2018}.  We shall not use the explicit spectral definition of
$\mathcal K_m^{(N)}(\mu)$; only the finite-volume identity
\eqref{eq:yan-finite-volume-sum-rule} and the uniform one-sided bound
\eqref{eq:yan-finite-volume-upper-bound} enter the proof.
Using the homogeneous decomposition
\eqref{eq:yan-homogeneous-decomposition}, define
\begin{equation}
\label{eq:A-mn-definition}
        A_{m,n}
        :=
        \sum_{k=1}^{m}
        \bigl[
        \phi_{2k}(\mathcal Y_k^{(m)})
        \bigr]_n .
\end{equation}
Then the finite-volume sum rule may be rewritten as
\begin{equation}
\label{eq:yan-sum-rule-Am}
        \mathcal K_m^{(N)}(\mu)
        =
        \sum_{n=0}^{N}
        \left[
            \log\frac1{1-|\alpha_n|^2}
            +
            A_{m,n}
        \right]
        +
        O_m(1),
\end{equation}
where the $O_m(1)$ term is uniform in $N$. All coefficient-side estimates
in the sequel concern the finite sum in \eqref{eq:yan-sum-rule-Am}.

\subsection{Constant and critical parts}
\label{subsec:yan-constant-critical}

A key feature of Yan's algebraic expressions is that each homogeneous term
admits a decomposition into a constant part and a critical part:
\begin{equation}
\label{eq:yan-const-critical-decomposition}
        \mathcal Y_k^{(m)}
        =
        \mathcal Y_{k,\mathrm{const}}^{(m)}
        +
        \mathcal Y_{k,\mathrm{crit}}^{(m)}.
\end{equation}
The constant part is the value of the algebraic expression at the
distinguished diagonal point where all shift variables are equal to $1$. Thus, if $\mathcal Y_{k,\mathrm{const}}^{(m)}=c$, then
$[\phi_{2k}(\mathcal Y_{k,\mathrm{const}}^{(m)})]_n
        =
        c\,|\alpha_n|^{2k}.$
 After applying the coefficient map, it produces a pure power
of $|\alpha_n|$. The critical part is the remaining shift-dependent
contribution. Thus
\begin{equation}
\label{eq:yan-const-critical-after-map}
        \bigl[\phi_{2k}(\mathcal Y_k^{(m)})\bigr]_n = \bigl[\phi_{2k}(\mathcal Y_{k,\mathrm{const}}^{(m)})\bigr]_n +
        \bigl[\phi_{2k}(\mathcal Y_{k,\mathrm{crit}}^{(m)})\bigr]_n.
\end{equation}
The first term is a scalar multiple of $|\alpha_n|^{2k}$, while the second
term contains the shift-dependent cancellations that eventually yield finite
differences.

In Section~\ref{sec:constant-log-tail} we prove that
\begin{equation}
\label{eq:yan-constant-part-value-preview}
        \mathcal Y_{k,\mathrm{const}}^{(m)}
        =
        -\frac1k,
        \,\, 1\le k\le m.
\end{equation}
Consequently, the constant pieces combine with the logarithmic term as
\begin{equation}
\label{eq:yan-log-tail-preview}
        \log\frac1{1-|\alpha_n|^2}
        -
        \sum_{k=1}^{m}
        \frac{|\alpha_n|^{2k}}{k}.
\end{equation}
This expression is positive and has the lower bound
\begin{equation}
\label{eq:yan-log-tail-lower-bound-preview}
        \log\frac1{1-|\alpha_n|^2}
        -
        \sum_{k=1}^{m}
        \frac{|\alpha_n|^{2k}}{k}
        \ge
        \frac1{m+1}|\alpha_n|^{2m+2}.
\end{equation}
The logarithmic tail is therefore the source of the
$\ell^{2m+2}$-control.

We isolate the critical contributions by defining
\begin{equation}
\label{eq:yan-quadratic-critical-definition}
        Q_m^{(N)}(\alpha)
        :=
        \sum_{n=0}^{N}
        \bigl[\phi_{2}(\mathcal Y_{1,\mathrm{crit}}^{(m)})\bigr]_n
\end{equation}
and
\begin{equation}
\label{eq:yan-higher-critical-definition}
        R_m^{(N)}(\alpha)
        :=
        \sum_{k=2}^{m}
        \sum_{n=0}^{N}
        \bigl[\phi_{2k}(\mathcal Y_{k,\mathrm{crit}}^{(m)})\bigr]_n.
\end{equation}
Thus $Q_m^{(N)}$ is the quadratic critical contribution, while
$R_m^{(N)}$ collects all higher critical contributions. The sum defining $R_m^{(N)}$ is understood to be empty when $m=1$.

Combining \eqref{eq:yan-sum-rule-Am} with the constant/critical
decomposition, the finite-volume sum rule takes the form

\begin{equation}
\label{eq:yan-critical-form-preview}
        \mathcal K_m^{(N)}(\mu)=
        Q_m^{(N)}(\alpha)
        +
        R_m^{(N)}(\alpha)
        +
        \sum_{n=0}^{N}
        \left[
            \log\frac1{1-|\alpha_n|^2}
            -
            \sum_{k=1}^{m}
            \frac{|\alpha_n|^{2k}}{k}
        \right]
        +
        O_m(1).
\end{equation}
This is the decomposition from which the final coercive estimate will be
derived.

\subsection{Shift algebra notation}
\label{subsec:yan-shift-algebra}

We now record the shift-algebra notation used in the later sections. The
forward shift $P$ acts on local sequences by
\begin{equation}
\label{eq:yan-P-on-local-sequence}
        (PF)_n=F_{n+1}.
\end{equation}
In particular, see \eqref{eq:yan-delta-definition},
\begin{equation}
\label{eq:yan-P-on-alpha}
        P\alpha_n=\alpha_{n+1},
        \,\,
        (P-1)\alpha_n=\Delta\alpha_n.
\end{equation}
The weight $H_m$ corresponds to the Laurent polynomial
\begin{equation}
\label{eq:yan-Hm-shift-polynomial}
        H_m(P)
        =
        2^{-m}(1-P)^m(1-P^{-1})^m.
\end{equation}
After clearing the negative powers by multiplying by $P^m$, this becomes

\begin{equation}
\label{eq:yan-Hm-shift-polynomial-cleared}
        P^mH_m(P)
        =
        2^{-m}(-1)^m(P-1)^{2m}.
\end{equation}

This identity is responsible for the appearance of $m$-th finite
differences in the quadratic part.

We shall repeatedly use the following elementary cancellation criterion.

\begin{lemma}[Moment cancellation and divisibility]
\label{lem:yan-moment-cancellation-divisibility}
Let
$R(P)=\sum_{j=j_-}^{j_+}c_jP^j$
be a Laurent polynomial. Then
$R(P)=(P-1)^qQ(P)$
for some Laurent polynomial $Q(P)$ if and only if
\begin{equation}
\label{eq:yan-moment-cancellation-condition}
        \sum_j c_j j^\ell=0,
        \,\,
        \ell=0,1,\ldots,q-1.
\end{equation}
Equivalently, $R$ has a zero of order at least $q$ at $P=1$.
\end{lemma}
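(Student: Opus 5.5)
The plan is to reduce to polynomials and use the derivative criterion for a root of order $q$. Write $j_-$ for the lowest exponent and set $\tilde R(P):=P^{-j_-}R(P)=\sum_j c_jP^{j-j_-}$. This is an honest polynomial with $\tilde R(0)=c_{j_-}$. Multiplication by the unit $P^{-j_-}$ of the Laurent polynomial ring changes neither the divisibility statement nor the order of vanishing at $P=1$. Moreover, $(P-1)^q\mid\tilde R$ in the Laurent ring if and only if it divides in $\mathbb C[P]$: if $\tilde R=(P-1)^qQ$ with $Q$ Laurent, then $Q=P^{-s}Q_0$ with $Q_0$ a polynomial, and since $P^s$ and $(P-1)^q$ are coprime we get $(P-1)^q\mid \tilde R$ in $\mathbb C[P]$. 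So the first equivalence reduces to the standard fact that a polynomial is divisible by $(P-1)^q$ exactly when $\tilde R^{(\ell)}(1)=0$ for $0\le\ell\le q-1$. This also gives the final ``equivalently'' clause, by the definition of the order of a zero.

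Next I translate the derivative conditions into moment conditions. Instead of ordinary derivatives I use the Euler operator $D=P\,\frac{d}{dP}$, which satisfies $D^\ell P^j=j^\ell P^j$. Hence
\[
(D^\ell R)(1)=\sum_j c_j j^\ell .
\]
Since $P=1\neq0$, the family $\{D^\ell\}_{\ell<q}$ and the family $\{(d/dP)^\ell\}_{\ell<q}$ generate the same conditions at $P=1$. Indeed, $D^\ell=\sum_{i\le \ell}S(\ell,i)P^i\frac{d^i}{dP^i}$ with Stirling numbers of the second kind and $S(\ell,\ell)=1$. This is a unitriangular change of basis when evaluated at $P=1$, so its inverse exists (Stirling numbers of the first kind). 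Therefore $R^{(\ell)}(1)=0$ for all $\ell<q$ if and only if $(D^\ell R)(1)=0$ for all $\ell<q$, and the latter is exactly \eqref{eq:yan-moment-cancellation-condition}. Applying $D$ directly to the Laurent polynomial $R$ avoids any shift by $j_-$, because $D$ acts diagonally on every monomial $P^j$ with $j\in\mathbb Z$.

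The only point requiring care, and in that sense the main obstacle, is the equivalence between the two families of derivative conditions. I would prove the triangular relation by induction on $\ell$ using $D(P^i f^{(i)})=iP^if^{(i)}+P^{i+1}f^{(i+1)}$, or alternatively by a direct argument: if $R=(P-1)^qQ$, then each $D^\ell$ with $\ell<q$ leaves a factor $(P-1)^{q-\ell}$, so it vanishes at $1$. For the converse, suppose the moments vanish for $\ell<q$ and write $R=(P-1)^rQ$ with $Q(1)\ne0$ and $r<q$. Then $(D^rR)(1)=r!\,Q(1)\neq0$, a contradiction. This second route is self-contained and is the one I would write out.
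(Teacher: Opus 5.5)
Your proof is correct and follows essentially the same route as the paper: divisibility by $(P-1)^q$ is equivalent to vanishing of the low-order derivatives at $P=1$, and the Euler identity $(D^\ell R)(1)=\sum_j c_j j^\ell$ turns these into the moment conditions. You also make explicit two points the paper leaves implicit, namely the reduction from Laurent to ordinary polynomials and the triangular equivalence between Euler and ordinary derivative conditions at $P=1$; your direct argument via $(D^rR)(1)=r!\,Q(1)$ handles the latter cleanly.
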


\begin{proof}
The condition
$R(P)=(P-1)^qQ(P)$
is equivalent to the vanishing of $R$ and its first $q-1$ derivatives at
$P=1$. Since
\begin{equation}
\label{eq:yan-euler-derivative-moments}
        \left.
        \left(P\frac{d}{dP}\right)^\ell R(P)
        \right|_{P=1}
        =
        \sum_j c_j j^\ell,
\end{equation}
for every $\ell\ge0$, these derivative conditions are equivalent to
\eqref{eq:yan-moment-cancellation-condition}.
\end{proof}

In a finite sum, a factor $P-1$ gives a telescoping contribution:
\begin{equation}
\label{eq:yan-basic-telescoping}
        \sum_{n=0}^{N}(P-1)F_n
        =
        F_{N+1}-F_0.
\end{equation}
Thus factors of $P-1$ in shift polynomials are the algebraic source of
telescoping terms and finite differences. In later sections, the diagonal vanishing of
$\mathcal Y_{k,\mathrm{crit}}^{(m)}$ in the multivariable shift algebra
will play the analogous role: each factor $x_\nu-1$ or $y_\mu-1$
becomes a discrete difference after applying the coefficient map, and the
BSZ normal form organizes the resulting local expressions modulo telescoping
terms.

\subsection{The structural input needed later}
\label{subsec:yan-structural-input}

The rest of the paper proves three estimates for the decomposition
\eqref{eq:yan-critical-form-preview}.

First, we will prove that the constant pieces satisfy
\begin{equation}
\label{eq:yan-structural-constant-piece}
        \mathcal Y_{k,\mathrm{const}}^{(m)}
        =
        -\frac1k,
        \,\, 1\le k\le m.
\end{equation}
Together with the logarithmic term this yields the positive tail
\begin{equation}
\label{eq:yan-structural-log-tail}
        \log\frac1{1-|\alpha_n|^2}
        -
        \sum_{k=1}^{m}
        \frac{|\alpha_n|^{2k}}{k}
        \ge
        \frac1{m+1}|\alpha_n|^{2m+2}.
\end{equation}

Second, we will show that the quadratic critical contribution is coercive:
\begin{equation}
\label{eq:yan-structural-quadratic-coercivity}
Q_m^{(N)}(\alpha)
        \ge
        c_m
        \sum_{n=0}^{N}|\Delta^m\alpha_n|^2
        -
        C_m.
\end{equation}
This supplies the positive $m$-th difference energy.

Third, we will prove that the higher critical contribution is
infinitesimally bounded by these
two positive quantities. More precisely, for every $\varepsilon>0$,
\begin{equation}
\label{eq:yan-structural-higher-critical-absorption}
        |R_m^{(N)}(\alpha)|
        \le
        \varepsilon
        \sum_{n=0}^{N+L_m}|\Delta^m\alpha_n|^2
        +
        \varepsilon
        \sum_{n=0}^{N+L_m}|\alpha_n|^{2m+2}
        +
        C_{\varepsilon,m}.
\end{equation}
This estimate is obtained by proving diagonal vanishing of the critical
algebraic expressions, converting that vanishing into BSZ normal form, and
applying a discrete interpolation inequality.

Once these three facts are established, \eqref{eq:yan-critical-form-preview}
implies the finite-volume coercive estimate
\begin{equation}
\label{eq:yan-final-coercive-estimate-preview}
        \mathcal K_m^{(N)}(\mu)
        \ge
        c_m
        \sum_{n=0}^{N}|\Delta^m\alpha_n|^2
        +
        c_m
        \sum_{n=0}^{N}|\alpha_n|^{2m+2}
        -
        C_m,
\end{equation}
where $c_m>0$ and $C_m<\infty$ are independent of $N$. The necessity
theorem then follows from the one-sided boundedness
\eqref{eq:yan-finite-volume-upper-bound}.

\section{Remainder classes and telescoping calculus}
\label{sec:remainder-telescoping}

In this section we introduce the finite-volume bookkeeping used throughout
the proof. The algebraic expressions coming from Yan's sum rule are local
polynomials in the Verblunsky coefficients and their shifts. In manipulating
these expressions one repeatedly produces boundary terms, telescoping
differences, and error terms involving finitely many shifted indices. The purpose of this section is to define classes of such terms and record the elementary rules that allow us to discard them modulo uniformly bounded
finite-section contributions in the final coercive estimate.

Throughout the section $m\ge1$ is fixed. Constants may depend on $m$, but
not on the finite-volume parameter $N$. Since
$|\alpha_n|<1$
for all $n$, every local monomial in finitely many Verblunsky coefficients
is uniformly bounded pointwise. We shall use this elementary fact repeatedly.

\subsection{Local expressions and finite shifts}
\label{subsec:local-expressions-finite-shifts}

A \emph{local expression} is a sequence
$F=\{F_n\}_{n\ge0}$
such that each $F_n$ is a polynomial in finitely many shifted variables
$\alpha_{n+j},
        \,\,
        \overline{\alpha}_{n+j},$
where the shifts $j$ range in a finite set depending only on the expression.
In the present paper all local expressions have shifts bounded in terms of
$m$.

More precisely, for $L\ge0$, we say that $F$ has shift radius at most
$L$ if
$$F_n
        =
        F\bigl(
        \alpha_{n-L},\ldots,\alpha_{n+L},
        \overline{\alpha}_{n-L},\ldots,\overline{\alpha}_{n+L}
        \bigr),$$
with the usual convention that the finitely many negative-index variables are
handled by fixed boundary data. Equivalently, one may extend $\alpha_n$ to $n<0$ by arbitrary fixed
values. Since only finitely many indices near the left endpoint are affected,
different choices of this extension change finite-volume sums only by
$O_m(1)$, uniformly in $N$.

We write $F=O_{\mathrm{loc},m}(1)$ if $F$ is a local polynomial whose
degree, shift radius, and coefficients are bounded in terms of $m$. Since
$|\alpha_n|<1$, this implies
$\sup_{n\ge0}|F_n|\le C_m .$

If $F$ is a local expression and $\ell\in\mathbb Z$ is bounded in terms
of $m$, then
$(P^\ell F)_n=F_{n+\ell}$
is again a local expression of the same type. Such bounded shifts do not
affect any of the finite-volume estimates below except through uniformly
bounded endpoint contributions.

\begin{lemma}[Bounded shifts in finite sums]
\label{lem:bounded-shifts-finite-sums}
Let $F=\{F_n\}$ be a nonnegative sequence, and let $\ell\in\mathbb Z$ be
fixed. Then for every $N\ge0$,
\begin{equation}
\label{eq:bounded-shift-nonnegative}
\sum_{n=0}^{N} F_{n+\ell}
        \le
        \sum_{n=0}^{N+|\ell|} F_n
        +
        O_\ell(1),
\end{equation}
with the convention that finitely many negative indices are absorbed into the
constant. In particular, if $F_n$ is one of
$|\Delta^m\alpha_n|^2,
        \,\,
        |\alpha_n|^{2m+2},$
then
\begin{equation}
\label{eq:bounded-shift-energy}
        \sum_{n=0}^{N} F_{n+\ell}
        \le
        \sum_{n=0}^{N+L_m} F_n
        +
        C_m.
\end{equation}
\end{lemma}

\begin{proof}
This is just a change of summation index. The only terms not contained in the
shifted interval are finitely many endpoint terms. For
$|\alpha_n|^{2m+2}$ the endpoint terms are bounded by $1$. For
$|\Delta^m\alpha_n|^2$, each finite difference involves only finitely many
$\alpha$'s with binomial coefficients depending on $m$, so it is also
uniformly bounded.
\end{proof}

\subsection{Telescoping expressions}
\label{subsec:telescoping-expressions}

The shift operator $P$ gives the discrete derivative
$P-1=\Delta.$
If $B=\{B_n\}$ is a local expression, then
$(P-1)B_n=B_{n+1}-B_n$
is a telescoping expression. Its finite-volume sum is purely an endpoint
contribution:
\begin{equation}
\label{eq:basic-telescoping-again}
        \sum_{n=0}^{N}(P-1)B_n
        =
        B_{N+1}-B_0.
\end{equation}
If $B=O_{\mathrm{loc},m}(1)$, then
\begin{equation}
\label{eq:bounded-telescoping}
        \left|
        \sum_{n=0}^{N}(P-1)B_n
        \right|
        \le
        C_m.
\end{equation}

We define the class $\mathfrak T_m$ of bounded telescoping expressions by
$$\mathfrak T_m
        =
        \left\{
        T_n=(P-1)B_n:
        B=O_{\mathrm{loc},m}(1)
        \right\}.$$
Thus, if $T\in\mathfrak T_m$, then
\begin{equation}
\label{eq:T-class-bound}
        \sum_{n=0}^{N}T_n
        =
        O_m(1)
\end{equation}
uniformly in $N$.

More generally, if $R(P)$ is a Laurent polynomial divisible by $P-1$, say
$R(P)=(P-1)Q(P),$
and $F$ is local and bounded, then
$R(P)F_n=(P-1)(Q(P)F)_n$
belongs to $\mathfrak T_m$, provided the shifts and coefficients of
$R,Q$ are bounded in terms of $m$. This observation is the basic
mechanism by which diagonal cancellations in shift polynomials become
endpoint terms.

\subsection{Finite-section equivalence}
\label{subsec:finite-section-equivalence}

It is useful to identify local expressions which differ by uniformly bounded
finite-volume contributions.

\begin{definition}[Finite-section equivalence]
\label{def:finite-section-equivalence}
For two local expressions $F=\{F_n\}$ and $G=\{G_n\}$, we write
$F\equiv G
        \,\, \mathrm{mod}\ \mathfrak T_m$
if
$F_n-G_n\in\mathfrak T_m.$
In particular,
\begin{equation}
\label{eq:finite-section-equivalence}
        \sum_{n=0}^{N}(F_n-G_n)
        =
        O_m(1)
\end{equation}
uniformly in $N$.
\end{definition}

We shall also use the notation
$F=G+O_{\mathrm{fs},m}(1)$
at the level of finite sums to mean that
\begin{equation}
\label{eq:finite-section-O}
        \sum_{n=0}^{N}F_n
        =
        \sum_{n=0}^{N}G_n
        +
        O_m(1),
\end{equation}
uniformly in $N$. Thus
$F\equiv G \pmod{\mathfrak T_m}
        \,\,\Longrightarrow\,\,
        F=G+O_{\mathrm{fs},m}(1).$
The notation $O_{\mathrm{fs},m}(1)$ will be used as a flexible
finite-section remainder, allowing not only bounded telescoping sums but
also uniformly bounded endpoint contributions produced by changes of
summation range or bounded shifts.

\begin{lemma}[Stability under bounded shifts]
\label{lem:finite-section-shift-stability}
Let $F=O_{\mathrm{loc},m}(1)$, and let $\ell\in\mathbb Z$ be bounded in
terms of $m$. Then
\begin{equation}
\label{eq:shift-difference-telescoping}
        P^\ell F-F\in\mathfrak T_m.
\end{equation}
Consequently,
\begin{equation}
\label{eq:shift-stability-finite-sum}
        \sum_{n=0}^{N}F_{n+\ell}
        =
        \sum_{n=0}^{N}F_n
        +
        O_m(1).
\end{equation}
\end{lemma}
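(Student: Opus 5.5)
The identity \eqref{eq:shift-difference-telescoping} follows from factoring $P^\ell-1$ through $P-1$, exactly as in the discussion after \eqref{eq:T-class-bound}. The finite-sum statement \eqref{eq:shift-stability-finite-sum} then follows from the telescoping bound \eqref{eq:T-class-bound}. I would separate the cases $\ell\ge0$ and $\ell<0$; the case $\ell=0$ is trivial.

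For $\ell\ge1$, write
\[
P^\ell-1=(P-1)(1+P+\cdots+P^{\ell-1}),
\]
and set $B:=\sum_{j=0}^{\ell-1}P^jF$, that is, $B_n=\sum_{j=0}^{\ell-1}F_{n+j}$. Then
\[
(P^\ell F-F)_n=(P-1)B_n .
\]
Since $F=O_{\mathrm{loc},m}(1)$ and $\ell$ is bounded in terms of $m$, $B$ is a sum of at most $\ell$ shifted copies of $F$. Each copy has the same degree and coefficients as $F$ and shift radius increased by at most $|\ell|$. Hence $B=O_{\mathrm{loc},m}(1)$, and so $P^\ell F-F\in\mathfrak T_m$.

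For $\ell=-r$ with $r\ge1$, write
\[
P^{-r}-1=-P^{-r}(P^r-1)=(P-1)\Bigl(-\sum_{j=1}^{r}P^{-j}\Bigr),
\]
and take $B_n:=-\sum_{j=1}^{r}F_{n-j}$. The same bookkeeping applies. The only new point is that negative indices appear near $n=0$. Here I use the convention of Subsection~\ref{subsec:local-expressions-finite-shifts}: $\alpha_n$ is extended to $n<0$ by fixed boundary data with $|\alpha_n|\le1$, so $B$ is still a local polynomial with uniformly bounded values. With this convention $B=O_{\mathrm{loc},m}(1)$ again.

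For the consequence, sum over $0\le n\le N$ and apply \eqref{eq:basic-telescoping-again}:
\[
\sum_{n=0}^{N}F_{n+\ell}-\sum_{n=0}^{N}F_n=B_{N+1}-B_0 .
\]
The right side is bounded by $2\sup_n|B_n|\le C_m$, using the pointwise bound for $O_{\mathrm{loc},m}(1)$ expressions, which holds because $|\alpha_n|<1$. This bound is uniform in $N$.

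No step is a real obstacle. The only care needed is to check that the shift radius and number of terms in $B$ stay bounded in terms of $m$, so that the constant is independent of $N$, and to treat the negative-index endpoint terms via the fixed extension convention.
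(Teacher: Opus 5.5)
Your proof is correct and follows the paper's argument: it uses the same factorizations $P^\ell-1=(P-1)(1+P+\cdots+P^{\ell-1})$ and $P^{-r}-1=(P-1)\bigl(-\sum_{j=1}^{r}P^{-j}\bigr)$ to exhibit $P^\ell F-F$ as a bounded telescoping expression. You also spell out two points the paper leaves implicit: the finite-sum consequence via $B_{N+1}-B_0$, and the handling of negative indices through the fixed extension convention.
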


\begin{proof}
For $\ell>0$,
$P^\ell-1
        =
        (P-1)(1+P+\cdots+P^{\ell-1}).$
Hence
$P^\ell F-F
        =
        (P-1)
        \left(
        \sum_{j=0}^{\ell-1}P^jF
        \right),$
which is a bounded telescoping expression. The case $\ell<0$ is identical,
using
$P^\ell-1
        =
        (P-1)
        \left(
        -P^\ell-P^{\ell+1}-\cdots-P^{-1}
        \right).$
\end{proof}

\subsection{Admissible remainders}
\label{subsec:admissible-remainders}

We introduce a remainder class that packages the analytic estimates used to
absorb noncoercive local terms into the two  quantities
$\sum |\Delta^m\alpha_n|^2,
        \,\,
        \sum |\alpha_n|^{2m+2}.$
This class is not part of Yan's algebraic model, but is a bookkeeping device
for the coercive argument below.

\begin{definition}[Absorbable remainder class]
\label{def:absorbable-remainder-class}
A local expression $R=\{R_n\}$ belongs to the absorbable remainder class
$\mathfrak R_m$ if, for every $\varepsilon>0$, there exist constants
$C_{\varepsilon,R,m}<\infty$ and $L_{R,m}\ge0$, independent of $N$,
such that
\begin{equation}
\label{eq:remainder-class-definition}
        \left|
        \sum_{n=0}^{N}R_n
        \right|
        \le
        \varepsilon
        \sum_{n=0}^{N+L_{R,m}}|\Delta^m\alpha_n|^2
        +
        \varepsilon
        \sum_{n=0}^{N+L_{R,m}}|\alpha_n|^{2m+2}
        +
        C_{\varepsilon,R,m}
\end{equation}
for all $N\ge0$.
\end{definition}

The absolute value in \eqref{eq:remainder-class-definition} is convenient.
For the final lower bound it would be enough to require the one-sided
estimate
$$\sum_{n=0}^{N}R_n
        \ge
        -\varepsilon
        \sum_{n=0}^{N+L_{R,m}}|\Delta^m\alpha_n|^2
        -
        \varepsilon
        \sum_{n=0}^{N+L_{R,m}}|\alpha_n|^{2m+2}
        -
        C_{\varepsilon,R,m}.$$
However, the normal-form estimates naturally give the two-sided version.

The shift allowance $N+L_{R,m}$ is harmless. By Lemma
\ref{lem:bounded-shifts-finite-sums}, it may be replaced by $N$ at the cost
of changing the constant $C_{\varepsilon,R,m}$ in estimates where the
coercive sums are already present. We keep $N+L_{R,m}$ in the definition
because normal-form monomials often contain finitely many shifted factors.
When only finitely many remainder expressions are involved, the constants
$L_{R,m}$ and $C_{\varepsilon,R,m}$ may be chosen uniformly over that
finite family and denoted simply by $L_m$ and $C_{\varepsilon,m}$.

\begin{lemma}[Elementary properties of $\mathfrak R_m$]
\label{lem:remainder-class-properties}
The class $\mathfrak R_m$ has the following properties.

\begin{enumerate}[label=(\arabic*)]
\item If $R^{(1)},R^{(2)}\in\mathfrak R_m$, then
$c_1R^{(1)}+c_2R^{(2)}\in\mathfrak R_m$
for all fixed constants $c_1,c_2$.

\item If $R\in\mathfrak R_m$ and $\ell\in\mathbb Z$ is bounded in terms
of $m$, then $P^\ell R\in\mathfrak R_m$.

\item If $T\in\mathfrak T_m$, then $T\in\mathfrak R_m$.

\item If $R\in\mathfrak R_m$ and $T\in\mathfrak T_m$, then
$R+T\in\mathfrak R_m.$
\end{enumerate}
\end{lemma}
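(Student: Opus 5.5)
The four properties follow directly from the definition of $\mathfrak R_m$. In each case I will start from the defining estimate \eqref{eq:remainder-class-definition} for the given remainders and adjust $\varepsilon$, the shift allowance $L$, and the constant.

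For (1), let $\varepsilon>0$ be given. If $c_1=c_2=0$ there is nothing to prove. Otherwise set $\varepsilon'=\varepsilon/(|c_1|+|c_2|)$ and apply \eqref{eq:remainder-class-definition} to $R^{(1)}$ and $R^{(2)}$ with $\varepsilon'$. Then use the triangle inequality
\[
\Bigl|\sum_{n=0}^N(c_1R^{(1)}_n+c_2R^{(2)}_n)\Bigr|\le|c_1|\Bigl|\sum_{n=0}^N R^{(1)}_n\Bigr|+|c_2|\Bigl|\sum_{n=0}^N R^{(2)}_n\Bigr|.
\]
Take $L=\max(L_{R^{(1)},m},L_{R^{(2)},m})$. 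Since both coercive summands are nonnegative, enlarging the summation range only increases the right-hand side, so the two estimates combine into one with coefficient $\varepsilon$ and constant $|c_1|C_{\varepsilon',R^{(1)},m}+|c_2|C_{\varepsilon',R^{(2)},m}$.

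For (2), write $\sum_{n=0}^N R_{n+\ell}=\sum_{n=\ell}^{N+\ell}R_n$. This equals $\sum_{n=0}^{N+\ell}R_n$ plus or minus finitely many endpoint terms, at most $|\ell|$ of them. I should note that this only works if $R$ is pointwise bounded. Elements of $\mathfrak R_m$ are local expressions, and throughout the paper these are local polynomials of type $O_{\mathrm{loc},m}(1)$. With $|\alpha_n|<1$, the endpoint terms are therefore $O_m(1)$, and negative indices are absorbed by the boundary-data convention of Subsection~\ref{subsec:local-expressions-finite-shifts}. If $\ell\ge0$, apply the estimate for $R$ with $N$ replaced by $N+\ell$; the resulting sums run to $N+\ell+L_{R,m}$, so $L_{P^\ell R,m}=L_{R,m}+|\ell|$ works. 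If $\ell<0$, the range $N+\ell$ is smaller and the same bound holds. (If $N+\ell<0$ the sum is a bounded endpoint term.) This boundedness assumption is the only delicate point. If the paper intends $\mathfrak R_m$ to include unbounded local expressions, I would instead invoke Lemma~\ref{lem:bounded-shifts-finite-sums} or restrict (2) to bounded $R$.

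For (3), let $T=(P-1)B$ with $B=O_{\mathrm{loc},m}(1)$. By \eqref{eq:T-class-bound}, $\bigl|\sum_{n=0}^N T_n\bigr|\le C_m$ uniformly in $N$. Hence \eqref{eq:remainder-class-definition} holds for every $\varepsilon$ with $L=0$ and $C_{\varepsilon,T,m}=C_m$. Property (4) then follows from (1) with $c_1=c_2=1$ and (3). So the only real point in the whole lemma is the boundedness of endpoint terms in (2); everything else is bookkeeping of constants.
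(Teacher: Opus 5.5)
Your proposal is correct and follows essentially the same route as the paper: rescaled $\varepsilon$ with the triangle inequality for (1), reindexing with uniformly bounded endpoint terms for (2), the telescoping bound \eqref{eq:T-class-bound} for (3), and combining (1) and (3) for (4). In (2) your fallback hedge is unnecessary, since local expressions are automatically pointwise bounded when $|\alpha_n|<1$. Applying the estimate at $N+\ell$ and enlarging the shift allowance by $|\ell|$, instead of comparing directly with $\sum_{n=0}^N R_n$ as the paper does, is only a cosmetic bookkeeping difference.
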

\begin{proof}
For $(1)$, let $\varepsilon>0$. Apply the defining estimate for
$R^{(1)}$ and $R^{(2)}$ with
$\varepsilon/(2\max\{1,|c_1|\})$ and
$\varepsilon/(2\max\{1,|c_2|\})$, respectively. After multiplying by
$c_1$ and $c_2$ and adding the two bounds, we obtain the required
estimate for $c_1R^{(1)}+c_2R^{(2)}$. If one of the coefficients
vanishes, the corresponding term is omitted.

For $(2)$, write
$\sum_{n=0}^N (P^\ell R)_n
=
\sum_{n=0}^N R_{n+\ell}.$
By Lemma~\ref{lem:bounded-shifts-finite-sums}, this differs from
$\sum_{n=0}^N R_n$ by at most $O_m(1)$. Equivalently, one may shift
the summation range from $[0,N]$ to $[\ell,N+\ell]$, producing only
finitely many endpoint terms. Since $R$ is a local expression with
bounded shifts and coefficients, each such endpoint term is uniformly
bounded, and hence the total endpoint contribution is absorbed into the
constant $C_{\varepsilon,m}$. The defining estimate for $R$ therefore
implies the same estimate for $P^\ell R$.

Property $(3)$ follows from \eqref{eq:T-class-bound}, since a uniformly
bounded finite-volume sum is absorbed into the constant
$C_{\varepsilon,m}$.

Finally, $(4)$ follows immediately from $(1)$ and $(3)$.
\end{proof}

\subsection{Discrete product rules}
\label{subsec:discrete-product-rules}

The passage from divisibility by powers of $P-1$ to normal-form monomials
uses only the discrete Leibniz rule. We record the form needed later.

For two sequences $f$ and $g$,
\begin{equation}
\label{eq:discrete-leibniz-first}
        \Delta(fg)_n
        =
        (\Delta f)_n g_{n+1}
        +
        f_n(\Delta g)_n.
\end{equation}
Equivalently,
\begin{equation}
\label{eq:discrete-leibniz-first-shift}
        \Delta(fg)
        =
        (\Delta f)(Pg)+f(\Delta g).
\end{equation}
Iterating gives the following finite expansion.

\begin{lemma}[Discrete Leibniz rule]
\label{lem:discrete-leibniz-rule}
Let $q\ge0$, and let $f^{(1)},\ldots,f^{(s)}$ be sequences. Then
$\Delta^q
        \left(
        \prod_{\nu=1}^{s} f^{(\nu)}
        \right)_n$
is a finite linear combination, with coefficients depending only on $q$ and
$s$, of terms of the form
\begin{equation}
\label{eq:discrete-leibniz-general-term}
        \prod_{\nu=1}^{s}
        \left(
        \Delta^{r_\nu} f^{(\nu)}
        \right)_{n+\ell_\nu},
\end{equation}
where
\begin{equation}
\label{eq:discrete-leibniz-total-order}
        r_\nu\ge0,
        \,\,
        \sum_{\nu=1}^{s}r_\nu=q,
\end{equation}
and the shifts $\ell_\nu$ are bounded in terms of $q$ and $s$.
\end{lemma}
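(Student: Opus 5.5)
The plan is an induction on $q$, starting from the one-step identity \eqref{eq:discrete-leibniz-first-shift} (and in fact on $q$ and $s$ together, or by iterating the two-factor rule over the factors). First handle $q=1$ for arbitrary $s$. Write $\prod_{\nu=1}^s f^{(\nu)} = f^{(1)}\cdot G$ with $G=\prod_{\nu=2}^s f^{(\nu)}$. Then \eqref{eq:discrete-leibniz-first-shift} gives $\Delta(f^{(1)}G)=(\Delta f^{(1)})(PG)+f^{(1)}\Delta G$. Since $P$ is multiplicative, $PG=\prod_{\nu\ge2}Pf^{(\nu)}$, and this term has the form \eqref{eq:discrete-leibniz-general-term} with $r_1=1$, all other $r_\nu=0$, and shifts $\ell_\nu\in\{0,1\}$. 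Applying induction on $s$ to $\Delta G$ shows that $\Delta(\prod f^{(\nu)})$ is a sum of exactly $s$ terms. In the $\nu$-th term, $f^{(\nu)}$ carries one difference, the factors with index $<\nu$ are unshifted, and those with index $>\nu$ are shifted by $1$. All coefficients equal $1$.

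For the inductive step in $q$, assume $\Delta^{q}(\prod f^{(\nu)})_n$ is a linear combination of terms $\prod_\nu (\Delta^{r_\nu}f^{(\nu)})_{n+\ell_\nu}$ with $\sum r_\nu=q$, $0\le \ell_\nu\le q$, and coefficients depending only on $(q,s)$. Apply $\Delta$ to each such term. Set $g^{(\nu)}:=P^{\ell_\nu}\Delta^{r_\nu}f^{(\nu)}$ and use that $\Delta$ commutes with $P$. The $q=1$ case applied to $\prod g^{(\nu)}$ then produces terms in which one factor becomes $P^{\ell_\nu}\Delta^{r_\nu+1}f^{(\nu)}$ and the others acquire an extra shift of $0$ or $1$. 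So the total order rises to $q+1$, each shift stays $\le q+1$, and the number of terms and the coefficients are controlled by $(q,s)$ alone. This closes the induction. It gives $r_\nu\ge0$, $\sum r_\nu=q$, and $0\le\ell_\nu\le q$, which is the claimed bound on the shifts.

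As an alternative check, I could write $\Delta=P-1$, expand $\Delta^q=\sum_j\binom qj(-1)^{q-j}P^j$, and use multiplicativity of $P$ together with the multinomial expansion of $P^j=\prod(1+\Delta)$ acting factorwise. This route works, but it produces terms with total order $\le q$, and exact order $q$ requires recombination, so the induction above is cleaner.

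There is no real obstacle here. The only points needing care are bookkeeping: making sure that $\Delta$ and $P$ commute, so that the shifts can be placed outside the differences as written in \eqref{eq:discrete-leibniz-general-term}, and recording that the shifts are nonnegative and at most $q$, so they depend only on $q$ (and trivially on $s$).
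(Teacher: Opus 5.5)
Your proof is correct and takes the same route as the paper, which simply iterates the first-order rule \eqref{eq:discrete-leibniz-first-shift}. You make explicit what the paper leaves implicit: the extension of the $q=1$ case to $s$ factors by induction on $s$, and the resulting bounds $0\le\ell_\nu\le q$ with all coefficients equal to $1$.

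One small correction to your aside. Write $\Delta_\nu$ for the difference acting on the $\nu$-th factor, so the operator is $\bigl(\prod_\nu(1+\Delta_\nu)-1\bigr)^q$. Expanding it gives terms of total order between $q$ and $sq$, with everything below order $q$ cancelling. So the terms have order at least $q$, not at most $q$. Your main argument does not depend on this.
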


\begin{proof}
The case $q=0$ is trivial. The case $q=1$ is
\eqref{eq:discrete-leibniz-first-shift}. Iterating the first-order rule gives
the result. At each step one difference is assigned to one of the factors,
while bounded shifts are produced by the occurrences of $P$.
\end{proof}

A variant will be used frequently: if a Laurent polynomial $R(P)$ is
divisible by $(P-1)^q$, then applying $R(P)$ to a local product produces,
modulo bounded shifts and bounded coefficients, a finite sum of terms of the
form \eqref{eq:discrete-leibniz-general-term} with total difference order
$q$.

The following is the one-variable form of the mechanism. In the BSZ
normal-form argument below, the same idea is applied to the multivariable
shift algebra, where each factor $x_\nu-1$ or $y_\mu-1$ becomes a
discrete difference on the corresponding factor.

\begin{lemma}[Divisibility to difference monomials]
\label{lem:divisibility-to-difference-monomials}
Let
$R(P)=(P-1)^qQ(P)$
be a Laurent polynomial whose coefficients and shifts are bounded in terms of
$m$. Let
$M_n
        =
        \prod_{\nu=1}^{s} f^{(\nu)}_n$
be a local product, where each $f^{(\nu)}$ is a shifted copy of either
$\alpha$ or $\overline{\alpha}$. Then $R(P)M_n$ is a finite linear
combination of monomials
\begin{equation}
\label{eq:divisibility-normal-form-term}
        \prod_{\nu=1}^{s}
        \left(
        \Delta^{r_\nu} f^{(\nu)}
        \right)_{n+\ell_\nu},
\end{equation}
where
\begin{equation}
\label{eq:divisibility-total-order}
        \sum_{\nu=1}^{s}r_\nu=q,
\end{equation}
and all shifts $\ell_\nu$ and coefficients are bounded in terms of $m$.
\end{lemma}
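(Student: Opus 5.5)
The argument reduces to the discrete Leibniz rule (Lemma~\ref{lem:discrete-leibniz-rule}) by factoring out $(P-1)^q$.

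Write $Q(P)=\sum_{j=j_-}^{j_+} d_jP^j$. Since $R(P)=(P-1)^qQ(P)$ has coefficients and shifts bounded in terms of $m$, the coefficients and shifts of $Q$ are also bounded in terms of $m$. To see this, divide by $(P-1)^q$ after multiplying by $P^{-j_-}$ to make $R$ a genuine polynomial. Synthetic division by $P-1$ repeated $q$ times produces coefficients that are iterated partial sums of the $c_j$. There are at most $q$ iterations over a range of length $j_+-j_-$, so every such coefficient is bounded by a constant depending only on $m$. Here $q$ itself is bounded by $\deg R$, hence by a function of $m$.

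Next, expand
$$R(P)M_n=\sum_j d_j\,\bigl((P-1)^q M\bigr)_{n+j}.$$
The sequence $M=\prod_{\nu} f^{(\nu)}$ is a product of $s$ sequences, each a shifted copy of $\alpha$ or $\overline\alpha$. By Lemma~\ref{lem:discrete-leibniz-rule} with $\Delta=P-1$, the quantity $(\Delta^q M)_{n'}$ is a finite linear combination, with coefficients depending only on $q$ and $s$, of the terms
$$\prod_{\nu=1}^s(\Delta^{r_\nu}f^{(\nu)})_{n'+\ell_\nu}, \qquad \sum_\nu r_\nu=q,$$
where the shifts $\ell_\nu$ are bounded in terms of $q$ and $s$. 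Setting $n'=n+j$ turns these into terms of the form \eqref{eq:divisibility-normal-form-term}, with total shifts $\ell_\nu+j$.

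The shifts $\ell_\nu+j$ are bounded because $|j|$ is bounded by the shift range of $Q$. The overall coefficients are products $d_j\times(\text{Leibniz coefficients})$, and both factors are bounded in terms of $m$. The number of factors $s$ is the degree of the local product, which in the intended applications (degrees $2k\le 2m$) is bounded in terms of $m$. I would add this as an implicit standing assumption. The shifts already built into each $f^{(\nu)}$ are harmless, since $\Delta$ commutes with $P$.

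The only step needing any care is the bound on the coefficients of $Q$, which the explicit repeated synthetic division settles; the rest is bookkeeping.
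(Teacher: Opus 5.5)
Your proposal is correct and follows the paper's proof: write $R(P)M_n=Q(P)\Delta^qM_n$, apply the discrete Leibniz rule (Lemma~\ref{lem:discrete-leibniz-rule}) to $\Delta^qM_n$, and observe that $Q(P)$ contributes only bounded shifts and bounded coefficients. Two points the paper leaves implicit are made precise in your write-up: the synthetic-division argument showing that the coefficients and shifts of $Q$ are bounded in terms of $m$, and the assumption that the number of factors $s$ is bounded in terms of $m$.
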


\begin{proof}
Write
$R(P)M_n
        =
        Q(P)\Delta^q M_n.$
Apply Lemma \ref{lem:discrete-leibniz-rule} to $\Delta^qM_n$. The operator
$Q(P)$ only introduces finitely many additional bounded shifts and bounded
coefficients.
\end{proof}

\subsection{Summation by parts and telescoping redistribution}
\label{subsec:summation-by-parts}

We also need a finite-volume summation-by-parts principle. For local
sequences $F$ and $G$,
$\sum_{n=0}^{N}(\Delta F)_nG_n$
can be rewritten by moving the difference from $F$ to $G$, at the cost of
a boundary term and a bounded shift.

\begin{lemma}[Discrete summation by parts]
\label{lem:discrete-summation-by-parts}
Let $F$ and $G$ be local expressions. Then
\begin{equation}
\label{eq:discrete-summation-by-parts}
        \sum_{n=0}^{N}(\Delta F)_nG_n
        =
        -\sum_{n=0}^{N}F_{n+1}(\Delta G)_n
        +
        F_{N+1}G_{N+1}
        -
        F_0G_0 .
\end{equation}
In particular, if $F$ and $G$ are bounded local expressions, then
\begin{equation}
\label{eq:summation-by-parts-mod-T}
        \sum_{n=0}^{N}(\Delta F)_nG_n
        =
        -\sum_{n=0}^{N}(PF)_n(\Delta G)_n
        +
        O_m(1).
\end{equation}
\end{lemma}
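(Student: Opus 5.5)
The identity \eqref{eq:discrete-summation-by-parts} is a direct rearrangement of the finite sum, i.e.\ the discrete analogue of integration by parts. The plan is to expand $(\Delta F)_n G_n$ with the product rule \eqref{eq:discrete-leibniz-first} applied to the product $FG$, and then telescope.

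First, apply \eqref{eq:discrete-leibniz-first} with $f=F$, $g=G$ in the opposite order of the factors, namely $\Delta(GF)_n=(\Delta G)_nF_{n+1}+G_n(\Delta F)_n$. Rearranged, this reads
\[
        (\Delta F)_nG_n
        =
        \Delta(FG)_n
        -
        F_{n+1}(\Delta G)_n .
\]
This can also be checked directly: the right-hand side is $F_{n+1}G_{n+1}-F_nG_n-F_{n+1}G_{n+1}+F_{n+1}G_n=(F_{n+1}-F_n)G_n$.

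Second, sum over $0\le n\le N$. The first term on the right telescopes by \eqref{eq:basic-telescoping-again} applied to $B=FG$, giving $F_{N+1}G_{N+1}-F_0G_0$, which is exactly \eqref{eq:discrete-summation-by-parts}.

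For \eqref{eq:summation-by-parts-mod-T}, note that if $F,G$ are bounded local expressions, i.e.\ $O_{\mathrm{loc},m}(1)$, then $FG=O_{\mathrm{loc},m}(1)$. Hence the two endpoint terms are bounded by $C_m$ uniformly in $N$, since $|\alpha_n|<1$ and only boundedly many shifts and coefficients occur. Equivalently, $\Delta(FG)\in\mathfrak T_m$, and \eqref{eq:T-class-bound} applies. Writing $F_{n+1}=(PF)_n$ then gives the stated form.

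There is no genuine obstacle here. The only point needing care is the index bookkeeping: the shift must land on $F$ (producing $PF$) rather than on $G$, which is why the Leibniz rule is used with the factors ordered as above. One should also check that negative indices, if $F$ or $G$ involve backward shifts, are handled by the fixed boundary-data convention, which affects only the endpoint constant.
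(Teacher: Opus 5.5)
Your proof is correct and is essentially the paper's argument: the paper expands $(\Delta F)_nG_n=F_{n+1}G_n-F_nG_n$ and reindexes $\sum_{n=0}^{N}F_nG_n$ directly, which is the same telescoping you obtain from the Leibniz identity $\Delta(FG)_n=(\Delta F)_nG_n+F_{n+1}(\Delta G)_n$. The $O_m(1)$ version then follows, as in the paper, because the two endpoint terms $F_{N+1}G_{N+1}$ and $F_0G_0$ are uniformly bounded for bounded local expressions.
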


\begin{proof}
Since $(\Delta F)_n=F_{n+1}-F_n$, we have
$$\sum_{n=0}^{N}(\Delta F)_nG_n
        =
        \sum_{n=0}^{N}F_{n+1}G_n
        -
        \sum_{n=0}^{N}F_nG_n .$$
Moreover,
$$\sum_{n=0}^{N}F_nG_n
        =
        \sum_{n=0}^{N}F_{n+1}G_{n+1}
        -
        F_{N+1}G_{N+1}
        +
        F_0G_0 .$$
Substituting this identity gives
$$\sum_{n=0}^{N}(\Delta F)_nG_n
        =
        -\sum_{n=0}^{N}F_{n+1}(G_{n+1}-G_n)
        +
        F_{N+1}G_{N+1}
        -
        F_0G_0,$$
which is \eqref{eq:discrete-summation-by-parts}. If $F$ and $G$ are
bounded local expressions, the two endpoint terms are uniformly bounded, and
\eqref{eq:summation-by-parts-mod-T} follows.
\end{proof}

For the purposes of this paper, the exact endpoint convention in
\eqref{eq:discrete-summation-by-parts} is irrelevant. All boundary terms are
uniformly bounded whenever the expressions are local and bounded. Thus we
will often write, at the level of finite sums,
\begin{equation}
\label{eq:summation-by-parts-symbolic}
        \sum (\Delta F)G
        =
        -\sum (PF)(\Delta G)
        +
        O_m(1).
\end{equation}

\subsection{Normal-form monomials}
\label{subsec:normal-form-monomials}

The higher critical terms will be reduced to monomials containing a prescribed
number of discrete differences. We introduce the terminology here.

A \emph{normal-form monomial of degree $2k$ and difference count $q$} is
a local expression of the form
\begin{equation}
\label{eq:normal-form-monomial-definition}
        T_n
        =
        \prod_{\nu=1}^{2k}
        \left(
        \Delta^{r_\nu}\gamma_\nu
        \right)_{n+\ell_\nu},
\end{equation}
where
$\gamma_\nu\in\{\alpha,\overline{\alpha}\},$
the shifts $\ell_\nu$ are bounded in terms of $m$, and
\begin{equation}
\label{eq:normal-form-difference-count}
        r_\nu\ge0,
        \,\,
        \sum_{\nu=1}^{2k}r_\nu=q.
\end{equation}
The cases relevant to the higher critical terms will have
\begin{equation}
\label{eq:normal-form-relevant-count}
        q=m+1-k,
        \,\,
        2\le k\le m.
\end{equation}

The main normal-form theorem proved later states that, modulo
$\mathfrak T_m$, every higher critical component is a finite linear
combination of normal-form monomials with the difference count
\eqref{eq:normal-form-relevant-count}. The absorption theorem then shows that
these monomials belong to $\mathfrak R_m$. In the notation of this section,
the eventual conclusion will be
\begin{equation}
\label{eq:higher-critical-in-T-plus-R-preview}
        \bigl[\phi_{2k}
        (\mathcal Y_{k,\mathrm{crit}}^{(m)})\bigr]_n
        \in
        \mathfrak T_m+\mathfrak R_m,
        \,\,
        2\le k\le m.
\end{equation}

\subsection{Coercive estimates modulo remainders}
\label{subsec:coercive-estimates-mod-remainders}

We finish the section by recording a simple absorption principle. It is the
formal step which allows the estimates on $\mathfrak R_m$ to be combined
with the positive quadratic energy and the logarithmic tail.

\begin{lemma}[Absorption principle]
\label{lem:absorption-principle}
Suppose that for some $a_m,b_m>0$ and $C_m<\infty$ one has
\begin{equation}
\label{eq:absorption-principle-positive-start}
        X_N
        \ge
        a_m
        \sum_{n=0}^{N}|\Delta^m\alpha_n|^2
        +
        b_m
        \sum_{n=0}^{N}|\alpha_n|^{2m+2}
        +
        \sum_{n=0}^{N}R_n
        -
        C_m,
\end{equation}
where $R\in\mathfrak R_m$. Then there exist constants
$c_m>0$ and $C_m'<\infty$, independent of $N$, such that
\begin{equation}
\label{eq:absorption-principle-conclusion}
        X_N
        \ge
        c_m
        \sum_{n=0}^{N}|\Delta^m\alpha_n|^2
        +
        c_m
        \sum_{n=0}^{N}|\alpha_n|^{2m+2}
        -
        C_m'.
\end{equation}
\end{lemma}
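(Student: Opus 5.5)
The plan is to apply the defining estimate \eqref{eq:remainder-class-definition} of $\mathfrak R_m$ to $R$ with a small parameter $\varepsilon$ tied to $a_m,b_m$, insert it into \eqref{eq:absorption-principle-positive-start}, and then remove the extended summation range $N+L_{R,m}$ using the nonnegativity and pointwise boundedness of the two coercive densities. Concretely, I would set $\varepsilon:=\tfrac12\min\{a_m,b_m\}$. By Definition~\ref{def:absorbable-remainder-class},
\[
\sum_{n=0}^{N}R_n\ \ge\ -\varepsilon\sum_{n=0}^{N+L}|\Delta^m\alpha_n|^2-\varepsilon\sum_{n=0}^{N+L}|\alpha_n|^{2m+2}-C_{\varepsilon,R,m},
\]
where $L=L_{R,m}$ does not depend on $N$. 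Only this one-sided lower bound is needed.

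The one point that needs attention is that the negative sums run up to $N+L$, while the positive sums in \eqref{eq:absorption-principle-positive-start} stop at $N$. The extra terms $n=N+1,\dots,N+L$ are exactly $L$ in number. Since $|\alpha_n|<1$, each satisfies $|\alpha_n|^{2m+2}\le1$ and $|\Delta^m\alpha_n|^2\le 4^m$, because $\Delta^m\alpha_n=\sum_j\binom mj(-1)^{m-j}\alpha_{n+j}$. Hence
\[
\sum_{n=0}^{N+L}F_n\le\sum_{n=0}^{N}F_n+L(4^m+1)
\]
for either density $F$, uniformly in $N$; this is just Lemma~\ref{lem:bounded-shifts-finite-sums} applied with $\ell=0$ and an enlarged range.

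Substituting this bound gives
\[
X_N\ge(a_m-\varepsilon)\sum_{n=0}^N|\Delta^m\alpha_n|^2+(b_m-\varepsilon)\sum_{n=0}^N|\alpha_n|^{2m+2}-C_m-C_{\varepsilon,R,m}-2\varepsilon L(4^m+1).
\]
Since $a_m-\varepsilon\ge a_m/2$ and $b_m-\varepsilon\ge b_m/2$, both coefficients are bounded below by $c_m:=\tfrac12\min\{a_m,b_m\}>0$, and we may take
\[
C_m':=C_m+C_{\varepsilon,R,m}+2\varepsilon L(4^m+1).
\]
Neither constant depends on $N$, so this is \eqref{eq:absorption-principle-conclusion}.

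I expect no real obstacle. The only genuine step is the range mismatch $N$ versus $N+L$, and it is handled by the uniform pointwise bound on the densities together with the fact that $L$ and $\varepsilon$ are fixed before $N$ varies.
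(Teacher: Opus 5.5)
Your proof is correct and follows the paper's argument. Like the paper, you take $\varepsilon$ of size $\tfrac12\min\{a_m,b_m\}$, apply the one-sided consequence of the definition of $\mathfrak R_m$, and bound the finitely many extra terms with $N<n\le N+L$ using $|\alpha_n|<1$. Your explicit bound $|\Delta^m\alpha_n|^2\le 4^m$ and explicit $C_m'$ simply make quantitative what the paper states qualitatively.
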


\begin{proof}
Apply the one-sided consequence of the definition of $\mathfrak R_m$ with
$\varepsilon
        <
        \frac12\min\{a_m,b_m\}.$
This gives
$$\sum_{n=0}^{N}R_n
        \ge
        -\varepsilon
        \sum_{n=0}^{N+L_m}|\Delta^m\alpha_n|^2
        -
        \varepsilon
        \sum_{n=0}^{N+L_m}|\alpha_n|^{2m+2}
        -
        C_{\varepsilon,m}.$$
The finitely many additional terms between $N$ and $N+L_m$ are uniformly
bounded when they occur with the negative sign: $|\alpha_n|\le1$, and
$\Delta^m\alpha_n$ is a fixed finite linear combination of bounded
Verblunsky coefficients. Hence
$$\sum_{n=N+1}^{N+L_m}|\Delta^m\alpha_n|^2
        +
        \sum_{n=N+1}^{N+L_m}|\alpha_n|^{2m+2}
        \le
        C_m.$$
Substituting into \eqref{eq:absorption-principle-positive-start} and reducing
the coefficients of the two positive sums gives
\eqref{eq:absorption-principle-conclusion}.
\end{proof}

This completes the finite-volume calculus needed in the sequel. The next
sections identify the constant part, the quadratic coercive term, and the
normal-form structure of the higher critical pieces in the framework just
introduced.

\section{The constant part and the logarithmic tail}
\label{sec:constant-log-tail}
In this section we isolate the diagonal, or constant, contribution in Yan's
homogeneous algebraic expressions. The diagonal value is universal: in degree
$2k$ it is $-1/k$, whenever this degree occurs in the order-$m$
truncated sum rule. These coefficients cancel precisely the first $m$ terms
in the Taylor expansion of
$\log(1/(1-|\alpha_n|^2))$. The remaining positive logarithmic tail is the
source of the $\ell^{2m+2}$-control in the necessity theorem.

Throughout this section $m\ge1$ is fixed, and we use the notation introduced
in Section~\ref{sec:yan-finite-volume}, in particular
\eqref{eq:A-mn-definition} and \eqref{eq:yan-sum-rule-Am}.

\subsection{Diagonal specialization}
\label{subsec:constant-diagonal-specialization}

We first clarify the meaning of the constant part. A local balanced monomial
of degree $2k$ has the form
$\prod_{\nu=1}^{k}\alpha_{n+i_\nu}
        \prod_{\mu=1}^{k}\overline{\alpha}_{n+j_\mu}.$
On the diagonal configuration all shifts are identified:
$\alpha_{n+i_\nu}=\alpha_n,
        \,\,
        \overline{\alpha}_{n+j_\mu}=\overline{\alpha}_n.$
Thus every balanced monomial of degree $2k$ becomes
$|\alpha_n|^{2k}.$
Equivalently, if a shift-polynomial expression is written as a finite
linear combination of such monomials, its constant part is the sum of its
coefficients after all shift variables are set equal to $1$.

In Yan's algebraic notation this operation is evaluation on the diagonal. We
denote the resulting scalar by
$\operatorname{diag}\mathcal Y_k^{(m)}$. We shall identify this scalar with
the corresponding constant shift-polynomial. With this convention we define
$\mathcal Y_{k,\mathrm{const}}^{(m)}
        :=
        \operatorname{diag}\mathcal Y_k^{(m)},
        \,\,
        \mathcal Y_{k,\mathrm{crit}}^{(m)}
        :=
        \mathcal Y_k^{(m)}
        -
        \mathcal Y_{k,\mathrm{const}}^{(m)} .$
Thus
$\operatorname{diag}\mathcal Y_{k,\mathrm{crit}}^{(m)}
        =
        0.$
After applying the coefficient map $\phi_{2k}$, the constant part becomes
\begin{equation}
\label{eq:constant-part-after-coefficient-map-general}
        \bigl[\phi_{2k}
        (\mathcal Y_{k,\mathrm{const}}^{(m)})\bigr]_n
        =
        \bigl(\operatorname{diag}\mathcal Y_k^{(m)}\bigr)
        |\alpha_n|^{2k}.
\end{equation}

The following proposition is the constant-term identity needed in the sequel.

\begin{proposition}[Diagonal constant of Yan's homogeneous expression]
\label{prop:constant-part-minus-one-over-k}
For every $m\ge1$ and every $1\le k\le m$,
\begin{equation}
\label{eq:constant-part-minus-one-over-k}
        \operatorname{diag}\mathcal Y_k^{(m)}
        =
        -\frac1k .
\end{equation}
Equivalently,
\begin{equation}
\label{eq:constant-part-coefficient-map}
        \bigl[\phi_{2k}
        (\mathcal Y_{k,\mathrm{const}}^{(m)})\bigr]_n
        =
        -\frac1k|\alpha_n|^{2k}.
\end{equation}
\end{proposition}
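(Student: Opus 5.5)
The plan is to compute the diagonal value by testing the sum rule on Verblunsky sequences that are constant in $n$, where every local balanced monomial collapses to a power of $|\alpha_n|$. On the diagonal all shift variables equal $1$, so $\operatorname{diag}\mathcal Y_k^{(m)}$ is the coefficient $c_k$ of $|a|^{2k}$ when the coefficient map is applied to a constant sequence $\alpha_n\equiv a$ with $0<|a|<1$. By \eqref{eq:yan-sum-rule-Am}, the finite-volume sum rule then reads
$$\mathcal K_m^{(N)}(\mu_a)=(N+1)\Bigl[\log\frac1{1-|a|^2}+\sum_{k=1}^m c_k|a|^{2k}\Bigr]+O_m(1).$$
The task reduces to showing that the bracket is $O(|a|^{2m+2})$ as $a\to0$. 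Since the $c_k$ are universal constants, comparing Taylor coefficients in $|a|^2$ then forces $c_k=-1/k$ for $1\le k\le m$.

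The spectral input is the measure $\mu_a$ with constant Verblunsky coefficients, the Geronimus measure. For $|a|$ small its support is an arc $\{\theta:|\theta|\ge 2\arcsin|a|\}$, together with possibly one mass point. Its weight $w_a$ is explicit, and the Szeg\H{o} asymptotics give a per-site growth rate: $\frac1{N}\mathcal K_m^{(N)}(\mu_a)\to \kappa_m(a)$, where $\kappa_m(a)$ is the $H_m$-weighted "free energy" density. The BSZ/Yan derivation identifies this rate as an integral of $H_m$ against the equilibrium (Frostman) data of the gap $\{|\theta|<2\arcsin|a|\}$. The key observation is that $H_m(e^{i\theta})=(1-\cos\theta)^m$ vanishes to order $2m$ at $\theta=0$, while the gap has width $\asymp|a|$. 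Consequently the spectral density integrated against $H_m$ over the region where the gap modifies things is $O(|a|^{2m+2})$: one gets $|a|^{2m}$ from the size of $H_m$ on the gap, and a further $|a|^2$ from the size of the perturbation. Thus $\kappa_m(a)=O(|a|^{2m+2})$.

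A cleaner alternative, which I would try first, is purely algebraic. Yan's model produces $\mathcal Y_k^{(m)}$ from the expansion of $\log$ of a determinant (the Hall--Littlewood generating function), which is linear in the weight $H_m=\sum_j h_jz^j$. Setting all shift variables equal to $1$ amounts to replacing each $z^j$ by $1$. Linearity in the weight then gives $\operatorname{diag}\mathcal Y_k^{(m)}=H_m(1)\cdot(\text{universal})+(\text{Szeg\H{o} part})$. Since $H_m(1)=0$ for $m\ge1$, only the contribution from the normalizing term $h_0\log$ survives, and that contribution is precisely $-\sum_k|a|^{2k}/k$ truncated, i.e.\ the expansion of $-\log(1-|a|^2)$. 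I would verify this on $m=1$, where $A_{1,n}=-|\alpha_n|^2+\tfrac12(\text{shift terms})$ with diagonal $-1$, as a sanity check.

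The main obstacle is the bookkeeping that connects Yan's normalization to the diagonal evaluation. Specifically, one must confirm that the degree-$2k$ terms with $k\le m$ receive no contribution from $h_j$ with $j\ne0$ beyond the combination $\sum_j h_j=H_m(1)=0$. In other words, one must check that diagonal evaluation commutes with Yan's construction of $\mathcal Y_k^{(m)}$, so that only $H_m(1)$ and its $z$-derivatives at $1$ (all zero up to order $2m-1$) appear. If this is awkward to establish directly, I would fall back on the constant-sequence/Geronimus computation above, which gives the same conclusion analytically.
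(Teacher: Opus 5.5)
The paper does not derive these constants. Its proof reads the normalization $\operatorname{diag}\mathcal Y_k^{(m)}=-1/k$ directly off Yan's construction \cite[Section~2]{Yan2018}, and then notes that $\phi_{2k}$ sends every balanced monomial to $|\alpha_n|^{2k}$ on the diagonal. Your constant-sequence argument is a genuinely different route: it recovers the constants from the sum rule itself and explains why they equal $-1/k$ precisely for $k\le m$. It is sound in outline, but it needs three things you have not supplied.

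\textbf{An explicit finite-volume functional.} You need a concrete $\mathcal K_m^{(N)}$, which the paper deliberately never specifies. Since the Geronimus measure has infinite weighted entropy, take the weighted entropy of the Bernstein--Szeg\H{o} truncation with coefficients $a,\dots,a,0,0,\dots$. Its weight is $w^{(N)}=(1-|a|^2)^{N+1}/|\Phi^*_{N+1}(e^{i\theta})|^2$.

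\textbf{Consistent normalization.} With the raw weight $H_m$, the logarithm enters with coefficient $h_0=\int H_m\,d\theta/2\pi=2^{-m}\binom{2m}{m}$. The same computation then returns $-h_0/k$, which equals $-1/k$ only for $m=1$. So you must use a functional for which the identity holds with coefficient $1$ on the logarithm.

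\textbf{The growth rate.} It does not come from Szeg\H{o} asymptotics, since the Geronimus measure is not in the Szeg\H{o} class. It comes from the transfer matrix: $\frac1N\log(1/w^{(N)})\to2\gamma_a$ in $L^1$. Here $\gamma_a\ge0$ is the Lyapunov exponent of the constant Szeg\H{o} recursion; it vanishes on the band and is supported on the gap $|\theta|<2\arcsin|a|$. Almost-everywhere convergence follows from the elliptic/hyperbolic dichotomy of the transfer matrix. Uniform integrability holds because all zeros of $\Phi^*_{N+1}$ lie outside $\overline{\mathbb D}$. Since $\int\log|\Phi^*_{N+1}|\,d\theta/2\pi=0$, one gets $2\int\gamma_a\,d\theta/2\pi=\log\frac1{1-|a|^2}$. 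Because $1-\cos\theta\le2|a|^2$ on the gap,
\[
0\le\kappa_m(a)=2\int H_m\,\gamma_a\,\frac{d\theta}{2\pi}\le(2|a|^2)^m\log\frac1{1-|a|^2}=O\bigl(|a|^{2m+2}\bigr).
\]
This is your heuristic made precise, and the Taylor comparison then forces $c_k=-1/k$. The paper's citation buys brevity and independence from any spectral definition of $\mathcal K_m^{(N)}$. Your route buys an actual derivation and a transparent reason for the restriction $k\le m$.

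Your ``cleaner'' algebraic alternative, by contrast, is not correct as stated. Linearity in the weight is fine. However, the diagonal value of the degree-$2k$ density attached to a single Fourier mode $e^{ij\theta}$ depends on $j$. On constant sequences its per-site value is $2\int\cos(j\theta)\gamma_a\,d\theta/2\pi$. Expanding $\cos(j\theta)$ and using $\int\theta^{2r}\gamma_a\,d\theta=O(|a|^{2r+2})$ shows that its $|a|^{2k}$-coefficient is a polynomial in $j^2$ of degree $k-1$. Hence, for a general weight $H=\sum_jh_jz^j$ with $h_0=\int H\,d\theta/2\pi$, the degree-$2k$ diagonal is $(H(1)-h_0)/k$ plus a linear combination of $(D^{2r}H)(1)=\sum_jh_jj^{2r}$ for $1\le r\le k-1$, where $D=z\,d/dz$. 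It is not a multiple of $H(1)$ alone.

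For $H_m$ these derivatives vanish because $2k-2<2m$. This is exactly where $k\le m$ enters, and it is the point you list as the main obstacle but leave open. It is not mere bookkeeping. For $H=1-\cos2\theta$, which has only a double zero at $1$, the per-site non-logarithmic density on constant sequences is $-|a|^2+\tfrac32|a|^4$. So its quartic diagonal is $+\tfrac32$ rather than $-\tfrac12$.
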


\begin{proof}
By the homogeneous decomposition in \eqref{eq:A-mn-definition}, the
non-logarithmic coefficient-side term in the finite-volume identity
\eqref{eq:yan-sum-rule-Am} is
$A_{m,n}
        =
        \sum_{k=1}^{m}
        \bigl[\phi_{2k}(\mathcal Y_k^{(m)})\bigr]_n .$
Yan's construction of these homogeneous algebraic expressions fixes their
diagonal normalization: after setting all shift variables equal to $1$, the
degree-$2k$ expression has diagonal value
$\operatorname{diag}\mathcal Y_k^{(m)}
        =
        -\frac1k,
        \,\, 1\le k\le m;$
see \cite[Section~2]{Yan2018}. This is exactly the diagonal part which, in
\eqref{eq:yan-sum-rule-Am}, cancels the first $m$ Taylor coefficients of
$\log(1/(1-|\alpha_n|^2))$.

Finally, applying the coefficient map $\phi_{2k}$ to the constant part,
every balanced monomial of degree $2k$ becomes
$|\alpha_n|^{2k}$ on the diagonal. Hence
$\bigl[\phi_{2k}
        (\mathcal Y_{k,\mathrm{const}}^{(m)})\bigr]_n
        =
        \bigl(\operatorname{diag}\mathcal Y_k^{(m)}\bigr)
        |\alpha_n|^{2k}
        =
        -\frac1k|\alpha_n|^{2k},$
which proves both assertions.
\end{proof}

\begin{remark}
\label{rem:constant-part-not-harmless}
The coefficient $-1/k$ is not merely a normalization convention. Combining
Proposition~\ref{prop:constant-part-minus-one-over-k} with the Taylor
expansion of the logarithm gives
$\log\frac1{1-|\alpha_n|^2}
        +
        \sum_{k=1}^{m}
        \bigl[\phi_{2k}
        (\mathcal Y_{k,\mathrm{const}}^{(m)})\bigr]_n
        =
        \sum_{j=m+1}^{\infty}\frac1j|\alpha_n|^{2j}.$
Thus Yan's diagonal constants remove exactly the first $m$ Taylor
coefficients of the logarithm and leave the positive tail starting at order
$2m+2$.
\end{remark}

\subsection{The logarithmic tail}
\label{subsec:logarithmic-tail}

We now combine Proposition~\ref{prop:constant-part-minus-one-over-k} with the
logarithmic term in the finite-volume sum rule. Define
\begin{equation}
\label{eq:log-tail-definition}
        L_{m,n}
        :=
        \log\frac1{1-|\alpha_n|^2}
        -
        \sum_{k=1}^{m}
        \frac{|\alpha_n|^{2k}}{k}.
\end{equation}
Since $|\alpha_n|<1$, this is well-defined for every $n$.

The elementary Taylor expansion
$\log\frac1{1-x}
        =
        \sum_{j=1}^{\infty}\frac{x^j}{j},
        \,\, 0\le x<1,$
implies the exact identity
\begin{equation}
\label{eq:log-tail-series}
        L_{m,n}
        =
        \sum_{j=m+1}^{\infty}
        \frac{|\alpha_n|^{2j}}{j}.
\end{equation}
In particular, $L_{m,n}\ge0$, and keeping only the first term in the tail
gives the lower bound
\begin{equation}
\label{eq:log-tail-pointwise-lower-bound}
        L_{m,n}
        \ge
        \frac1{m+1}|\alpha_n|^{2m+2}.
\end{equation}

We record this as a lemma.

\begin{lemma}[Logarithmic tail]
\label{lem:logarithmic-tail}
For every $m\ge1$ and every $n\ge0$,
\begin{equation}
\label{eq:logarithmic-tail-identity}
        \log\frac1{1-|\alpha_n|^2}
        -
        \sum_{k=1}^{m}
        \frac{|\alpha_n|^{2k}}{k}
        =
        \sum_{j=m+1}^{\infty}
        \frac{|\alpha_n|^{2j}}{j}.
\end{equation}
Consequently,
\begin{equation}
\label{eq:logarithmic-tail-coercivity}
        \log\frac1{1-|\alpha_n|^2}
        -
        \sum_{k=1}^{m}
        \frac{|\alpha_n|^{2k}}{k}
        \ge
        \frac1{m+1}|\alpha_n|^{2m+2}.
\end{equation}
Therefore, for every $N\ge0$,
\begin{equation}
\label{eq:logarithmic-tail-finite-volume}
        \sum_{n=0}^{N}L_{m,n}
        \ge
        \frac1{m+1}
        \sum_{n=0}^{N}|\alpha_n|^{2m+2}.
\end{equation}
\end{lemma}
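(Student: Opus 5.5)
The proof is a direct pointwise argument based on the power series of the logarithm; no input from Yan's algebraic model is needed. Set $x:=|\alpha_n|^2$. Since $\alpha_n\in\mathbb D$ for a nontrivial measure, we have $0\le x<1$, so the series
$\log\frac1{1-x}=\sum_{j\ge1}\frac{x^j}{j}$
converges absolutely. Subtracting the finite partial sum $\sum_{k=1}^m x^k/k$ from the full series is a termwise operation on a convergent series. It yields the exact identity \eqref{eq:logarithmic-tail-identity}, namely that $L_{m,n}$, defined in \eqref{eq:log-tail-definition}, equals $\sum_{j\ge m+1}x^j/j$.

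For the pointwise lower bound \eqref{eq:logarithmic-tail-coercivity}, observe that every term $x^j/j$ of the tail is nonnegative. We may therefore discard all terms with $j\ge m+2$ and keep only the $j=m+1$ term. This gives $L_{m,n}\ge x^{m+1}/(m+1)=|\alpha_n|^{2m+2}/(m+1)$.

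For the finite-volume bound \eqref{eq:logarithmic-tail-finite-volume}, sum the pointwise inequality over $0\le n\le N$. The sum is finite, so there are no convergence issues, and the constant $1/(m+1)$ does not depend on $N$.

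No step here is a genuine obstacle. The only point needing care is that $|\alpha_n|<1$ strictly, which makes $\log(1/(1-|\alpha_n|^2))$ finite and guarantees convergence of the series. This holds because $\mu$ is nontrivial, so all Verblunsky coefficients lie in the open disk $\mathbb D$.
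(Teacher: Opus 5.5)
Your proposal is correct and follows essentially the same route as the paper. Like the paper, you expand $\log\frac{1}{1-x}$ at $x=|\alpha_n|^2\in[0,1)$, subtract the first $m$ terms, keep only the $j=m+1$ term of the nonnegative tail, and sum the pointwise bound over $0\le n\le N$.
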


\begin{proof}
Apply the Taylor expansion of $-\log(1-x)$ with
$x=|\alpha_n|^2$. Since $0\le|\alpha_n|^2<1$, the series is absolutely
convergent. Subtracting the first $m$ terms gives
\eqref{eq:logarithmic-tail-identity}. The lower bounds
\eqref{eq:logarithmic-tail-coercivity} and
\eqref{eq:logarithmic-tail-finite-volume} follow by retaining the first
positive term of the tail.
\end{proof}

\subsection{Rewriting the finite-volume sum rule}
\label{subsec:constant-log-tail-rewrite-sum-rule}

We now rewrite Yan's finite-volume sum rule in a form that separates the
positive logarithmic tail, the quadratic critical term, and the higher
critical terms.

Recall from Section~\ref{sec:yan-finite-volume} that
$A_{m,n}
        =
        \sum_{k=1}^{m}
        \bigl[\phi_{2k}(\mathcal Y_k^{(m)})\bigr]_n.$
Using the constant/critical decomposition,
$\mathcal Y_k^{(m)}
        =
        \mathcal Y_{k,\mathrm{const}}^{(m)}
        +
        \mathcal Y_{k,\mathrm{crit}}^{(m)},$
and Proposition~\ref{prop:constant-part-minus-one-over-k}, we obtain
\begin{equation}
\label{eq:A-mn-constant-critical-expanded}
\begin{aligned}
        A_{m,n}
        =
        -\sum_{k=1}^{m}\frac{|\alpha_n|^{2k}}{k}
        +
        \sum_{k=1}^{m}
        \bigl[\phi_{2k}
        (\mathcal Y_{k,\mathrm{crit}}^{(m)})\bigr]_n.
\end{aligned}
\end{equation}
Therefore,
\begin{equation}
\label{eq:log-plus-A-critical-tail}
\begin{aligned}
        \log\frac1{1-|\alpha_n|^2}
        +
        A_{m,n}
        =
        L_{m,n}+
        \sum_{k=1}^{m}
        \bigl[\phi_{2k}
        (\mathcal Y_{k,\mathrm{crit}}^{(m)})\bigr]_n.
\end{aligned}
\end{equation}

Define the quadratic critical contribution by
\begin{equation}
\label{eq:constant-section-Qm-definition}
        Q_m^{(N)}(\alpha)
        :=
        \sum_{n=0}^{N}
        \bigl[\phi_{2}
        (\mathcal Y_{1,\mathrm{crit}}^{(m)})\bigr]_n,
\end{equation}
and the higher critical contribution by
\begin{equation}
\label{eq:constant-section-Rm-definition}
        R_m^{(N)}(\alpha)
        :=
        \sum_{k=2}^{m}
        \sum_{n=0}^{N}
        \bigl[\phi_{2k}
        (\mathcal Y_{k,\mathrm{crit}}^{(m)})\bigr]_n.
\end{equation}
Substituting \eqref{eq:log-plus-A-critical-tail} into the finite-volume
identity \eqref{eq:yan-sum-rule-Am}, we obtain
\begin{equation}
\label{eq:constant-section-critical-form}
\begin{aligned}
        \mathcal K_m^{(N)}(\mu)
        &=
        Q_m^{(N)}(\alpha)
        +
        R_m^{(N)}(\alpha)
        +
        \sum_{n=0}^{N}L_{m,n}
        +
        O_m(1).
\end{aligned}
\end{equation}

This identity is the basic decomposition used in the rest of the proof.

By Lemma~\ref{lem:logarithmic-tail}, the last positive term satisfies
\begin{equation}
\label{eq:constant-section-tail-lower-bound}
        \sum_{n=0}^{N}L_{m,n}
        \ge
        \frac1{m+1}
        \sum_{n=0}^{N}|\alpha_n|^{2m+2}.
\end{equation}
Thus the finite-volume sum rule has the preliminary lower bound
\begin{equation}
\label{eq:constant-section-preliminary-lower-bound}
        \mathcal K_m^{(N)}(\mu)
        \ge
        Q_m^{(N)}(\alpha)
        +
        R_m^{(N)}(\alpha)
        +
        \frac1{m+1}
        \sum_{n=0}^{N}|\alpha_n|^{2m+2}
        -
        C_m.
\end{equation}
The next section will identify $Q_m^{(N)}(\alpha)$ with the coercive
$m$-th difference energy. The remaining term
$R_m^{(N)}(\alpha)$ will be controlled later by the normal-form and
absorption arguments.

\subsection{Consequences for the final coercive estimate}
\label{subsec:constant-log-tail-consequences}

We conclude this section by recording the exact role of the logarithmic tail
in the final argument.

Suppose that the quadratic critical term satisfies
\begin{equation}
\label{eq:constant-section-assume-quadratic}
        Q_m^{(N)}(\alpha)
        \ge
        c_m^{(2)}
        \sum_{n=0}^{N}|\Delta^m\alpha_n|^2
        -
        C_m
\end{equation}
for some $c_m^{(2)}>0$, and that the higher critical term satisfies the
infinitesimal bound
\begin{equation}
\label{eq:constant-section-assume-higher}
        |R_m^{(N)}(\alpha)|
        \le
        \varepsilon
        \sum_{n=0}^{N+L_m}|\Delta^m\alpha_n|^2
        +
        \varepsilon
        \sum_{n=0}^{N+L_m}|\alpha_n|^{2m+2}
        +
        C_{\varepsilon,m}.
\end{equation}
Then, choosing $\varepsilon>0$ sufficiently small and using the harmlessness
of the finitely many shifted endpoint terms, the preliminary lower bound
\eqref{eq:constant-section-preliminary-lower-bound} yields
\begin{equation}
\label{eq:constant-section-final-coercive-preview}
        \mathcal K_m^{(N)}(\mu)
        \ge
        c_m
        \sum_{n=0}^{N}|\Delta^m\alpha_n|^2
        +
        c_m
        \sum_{n=0}^{N}|\alpha_n|^{2m+2}
        -
        C_m,
\end{equation}
with constants independent of $N$.

Thus the logarithmic tail supplies precisely the positive power term needed
to absorb the nonquadratic critical contributions and to prove
$\alpha\in\ell^{2m+2}.$

\section{Quadratic coercivity for arbitrary $m$}
\label{sec:quadratic-coercivity}

In this section we identify the quadratic critical contribution in Yan's
finite-volume formula. The result is that, up to uniformly bounded
finite-section boundary terms, the quadratic critical term is exactly the
$m$-th finite-difference energy
$2^{-m}
        \sum_{n=0}^{N}|\Delta^m\alpha_n|^2.$
This is the first coercive term in the proof of the necessity theorem.

Throughout this section $m\ge1$ is fixed. We use the notation of
Sections~\ref{sec:yan-finite-volume}--\ref{sec:constant-log-tail}. In
particular,
$Q_m^{(N)}(\alpha)
        =
        \sum_{n=0}^{N}
        \bigl[\phi_2(\mathcal Y_{1,\mathrm{crit}}^{(m)})\bigr]_n$
denotes the quadratic critical contribution. We shall use the following finite-section convention. Whenever a local
expression contains indices outside $[0,N]$, we extend $\alpha$ boundedly
to all of $\mathbb Z$. Since only $O_m(1)$ endpoint terms are affected
and $|\alpha_n|<1$, different choices of this extension change the
finite-section sums by $O_m(1)$, uniformly in $N$.

\subsection{The quadratic symbol}
\label{subsec:quadratic-symbol}

The weight under consideration is
$H_m(e^{i\theta})
        =
        (1-\cos\theta)^m.$
Using
$1-\cos\theta
        =
        \frac12|1-e^{i\theta}|^2,$
we have
\begin{equation}
\label{eq:quad-Hm-symbol}
        H_m(e^{i\theta})
        =
        2^{-m}|1-e^{i\theta}|^{2m}.
\end{equation}
In shift notation, this corresponds to the Laurent polynomial
\begin{equation}
\label{eq:quad-Hm-shift-symbol}
        H_m(P)
        =
        2^{-m}(1-P)^m(1-P^{-1})^m.
\end{equation}
Equivalently,
\begin{equation}
\label{eq:quad-Hm-cleared-symbol}
        P^mH_m(P)
        =
        2^{-m}(P-1)^{2m}.
\end{equation}

The quadratic part of Yan's algebraic model is identified with this Fourier
symbol. More precisely, Yan's finite-volume formula gives, modulo
finite-section boundary terms, the quadratic critical contribution as the
local quadratic form associated with \eqref{eq:quad-Hm-shift-symbol}. Thus,
if $h_{m,\ell}$ are the Fourier coefficients of $H_m$, namely
\begin{equation}
\label{eq:quad-Hm-Fourier}
        H_m(e^{i\theta})
        =
        \sum_{\ell=-m}^{m}h_{m,\ell}e^{i\ell\theta},
        \,\,
        h_{m,-\ell}=\overline{h_{m,\ell}},
\end{equation}
then
\begin{equation}
\label{eq:quad-Qm-Fourier-form}
        Q_m^{(N)}(\alpha)
        =
        \sum_{n=0}^{N}
        \sum_{\ell=-m}^{m}
        h_{m,\ell}\,
        \alpha_{n+\ell}\overline{\alpha_n}
        +
        O_m(1),
\end{equation}
uniformly in $N$.

This identification is the quadratic part of Yan's algebraic sum rule; see
\cite{Yan2018} and the higher-order OPUC sum rules of
Breuer--Simon--Zeitouni \cite{BSZ-Duke2018,BSZ-pedagogical2018}. The rest
of this section is the elementary finite-difference analysis of this
quadratic form.

\subsection{Expansion of the finite-difference energy}
\label{subsec:quadratic-difference-expansion}

Recall that
$\Delta=P-1.$
Thus
\begin{equation}
\label{eq:quad-delta-m-expansion}
        \Delta^m\alpha_n
        =
        \sum_{j=0}^{m}
        (-1)^{m-j}\binom{m}{j}\alpha_{n+j}.
\end{equation}
Hence
\begin{equation}
\label{eq:quad-delta-energy-expanded-first}
        |\Delta^m\alpha_n|^2
        =
        \sum_{j=0}^{m}\sum_{r=0}^{m}
        (-1)^{j+r}
        \binom{m}{j}\binom{m}{r}
        \alpha_{n+j}\overline{\alpha_{n+r}}.
\end{equation}
Changing variables by $\ell=j-r$, the full-line symbol of this quadratic
form is
$(e^{i\theta}-1)^m(e^{-i\theta}-1)^m.$
Since
$(e^{i\theta}-1)^m(e^{-i\theta}-1)^m
        =
        |1-e^{i\theta}|^{2m},$
we obtain the symbol identity
\begin{equation}
\label{eq:quad-delta-symbol}
        |\Delta^m|^2
        \,\,\longleftrightarrow\,\,
        |1-e^{i\theta}|^{2m}.
\end{equation}
Combining \eqref{eq:quad-Hm-symbol} and \eqref{eq:quad-delta-symbol}, the
quadratic form associated with $H_m$ is
$2^{-m}|\Delta^m|^2.$

We record the finite-volume version of this statement.

\begin{lemma}[Finite-volume difference identity]
\label{lem:finite-volume-difference-identity}
Let
$h_{m,\ell}$
be the Fourier coefficients of $H_m$, as in
\eqref{eq:quad-Hm-Fourier}. Then
\begin{equation}
\label{eq:finite-volume-difference-identity}
        \sum_{n=0}^{N}
        \sum_{\ell=-m}^{m}
        h_{m,\ell}
        \alpha_{n+\ell}\overline{\alpha_n}
        =
        2^{-m}
        \sum_{n=0}^{N}|\Delta^m\alpha_n|^2
        +
        O_m(1),
\end{equation}
uniformly in $N$.
\end{lemma}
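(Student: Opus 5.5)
The plan is to compute the Fourier coefficients of $H_m$ directly from the factorization \eqref{eq:quad-Hm-shift-symbol}, match them with the coefficients in the expansion \eqref{eq:quad-delta-energy-expanded-first} of $|\Delta^m\alpha_n|^2$, and then absorb the mismatch between the two ways of indexing the finite sum into boundary terms. First, expanding $(1-e^{i\theta})^m(1-e^{-i\theta})^m=\sum_{j,r}(-1)^{j+r}\binom mj\binom mr e^{i(j-r)\theta}$ gives
\[
h_{m,\ell}=2^{-m}\sum_{\substack{0\le j,r\le m\\ j-r=\ell}}(-1)^{j+r}\binom mj\binom mr ,
\]
which agrees with \eqref{eq:quad-delta-symbol}. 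Thus, grouping the terms of \eqref{eq:quad-delta-energy-expanded-first} by $\ell=j-r$, we get
\[
2^{-m}|\Delta^m\alpha_n|^2=2^{-m}\sum_{\ell=-m}^{m}\sum_{\substack{j-r=\ell}}(-1)^{j+r}\binom mj\binom mr\,\alpha_{n+r+\ell}\overline{\alpha_{n+r}} .
\]

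Next, each individual term $\alpha_{n+r+\ell}\overline{\alpha_{n+r}}$ is the shift $P^{r}F^{(\ell)}$ of the bounded local expression $F^{(\ell)}_n=\alpha_{n+\ell}\overline{\alpha_n}$, with $0\le r\le m$. Lemma~\ref{lem:finite-section-shift-stability} gives $P^rF^{(\ell)}-F^{(\ell)}\in\mathfrak T_m$, so
\[
\sum_{n=0}^N \alpha_{n+r+\ell}\overline{\alpha_{n+r}}=\sum_{n=0}^N\alpha_{n+\ell}\overline{\alpha_n}+O_m(1).
\]
Summing over the finitely many pairs $(j,r)$, with binomial weights bounded in terms of $m$, yields
\[
2^{-m}\sum_{n=0}^N|\Delta^m\alpha_n|^2=\sum_{n=0}^N\sum_{\ell=-m}^{m}h_{m,\ell}\,\alpha_{n+\ell}\overline{\alpha_n}+O_m(1),
\]
which is \eqref{eq:finite-volume-difference-identity}.

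The argument is routine, and the only point needing care is the boundary bookkeeping. For negative $\ell$, indices $n+\ell<0$ occur, and these are handled by the bounded-extension convention stated at the start of the section. All endpoint contributions involve at most $O(m)$ terms, each bounded by $\binom m{\lfloor m/2\rfloor}^2$ since $|\alpha_n|<1$. The resulting constant is therefore uniform in $N$.
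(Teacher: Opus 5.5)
Your proof is correct and is essentially the paper's argument. The paper writes the left-hand side as $2^{-m}\sum_{n=0}^{N}\bigl[(1-P)^m(1-P^{-1})^m\alpha\bigr]_n\overline{\alpha_n}$ and moves $(1-P^{-1})^m$ onto $\overline{\alpha}$ by discrete integration by parts, producing endpoint terms within distance $m$ of $0$ and $N$; your coefficient matching plus the term-by-term shift of $\alpha_{n+r+\ell}\overline{\alpha_{n+r}}$ to $\alpha_{n+\ell}\overline{\alpha_n}$ via Lemma~\ref{lem:finite-section-shift-stability} is that same integration by parts written out explicitly.
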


\begin{proof}
By \eqref{eq:quad-Hm-symbol},
$H_m(P)
        =
        2^{-m}(1-P)^m(1-P^{-1})^m.$
Thus the left-hand side of
\eqref{eq:finite-volume-difference-identity} is
$2^{-m}
        \sum_{n=0}^{N}
        \bigl[(1-P)^m(1-P^{-1})^m\alpha\bigr]_n
        \overline{\alpha_n}.$
On the full line, this is exactly
$2^{-m}
        \sum_{n}
        |\Delta^m\alpha_n|^2,$
because
$(1-P^{-1})^m$
is the adjoint of
$(-1)^m(P-1)^m$
with respect to the $\ell^2$ pairing, and the signs cancel in the product.

On the finite interval $[0,N]$, the same discrete integration by parts
produces endpoint terms supported within a distance $m$ of the endpoints.
These terms are finite linear combinations of products
$\alpha_{a}\overline{\alpha_b},$
where $a,b$ belong to a set of cardinality bounded in terms of $m$.
Since $|\alpha_n|<1$, their total contribution is $O_m(1)$, uniformly in
$N$. Therefore
\eqref{eq:finite-volume-difference-identity} follows.
\end{proof}

\subsection{Identification of the quadratic critical term}
\label{subsec:identification-quadratic-critical}

We now combine the symbol computation with Yan's quadratic algebraic
expression.

\begin{proposition}[Quadratic coercivity]
\label{prop:quadratic-coercivity}
For every $m\ge1$, the quadratic critical contribution satisfies
\begin{equation}
\label{eq:quadratic-coercivity-identity}
        Q_m^{(N)}(\alpha)
        =
        2^{-m}
        \sum_{n=0}^{N}|\Delta^m\alpha_n|^2
        +
        O_m(1),
\end{equation}
uniformly in $N$. In particular, there exists $C_m<\infty$ such that
\begin{equation}
\label{eq:quadratic-coercivity-lower-bound}
        Q_m^{(N)}(\alpha)
        \ge
        2^{-m}
        \sum_{n=0}^{N}|\Delta^m\alpha_n|^2
        -
        C_m
\end{equation}
for all $N\ge0$.
\end{proposition}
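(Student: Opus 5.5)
The proposition follows by combining the identification \eqref{eq:quad-Qm-Fourier-form} with Lemma~\ref{lem:finite-volume-difference-identity}. I would carry this out in three steps.

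\emph{Step 1: reduce to the Fourier quadratic form.} By \eqref{eq:quad-Qm-Fourier-form}, the quadratic critical contribution $Q_m^{(N)}(\alpha)$ coincides, up to an $O_m(1)$ error uniform in $N$, with the local quadratic form
\[
\sum_{n=0}^{N}\sum_{\ell=-m}^{m}h_{m,\ell}\,\alpha_{n+\ell}\overline{\alpha_n}.
\]
One point must be checked: $Q_m^{(N)}$ is defined through the \emph{critical} part $\mathcal Y_{1,\mathrm{crit}}^{(m)}=\mathcal Y_1^{(m)}+1$. The symbol $H_m(P)$ vanishes at $P=1$, so its diagonal value is $0$. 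The quadratic part of the full expression $\mathcal Y_1^{(m)}$ is therefore the symbol $H_m$ minus the constant $1$, and subtracting the diagonal constant $-1$ from Proposition~\ref{prop:constant-part-minus-one-over-k} returns exactly the $H_m$-form. If Yan's normalization instead presents the quadratic term with a shift of the form $P^{j}$ rather than symmetrically, then Lemma~\ref{lem:finite-section-shift-stability} moves it back at the cost of a bounded telescoping term in $\mathfrak T_m$.

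\emph{Step 2: rewrite the form as the difference energy.} Apply Lemma~\ref{lem:finite-volume-difference-identity}. It rewrites the form above as $2^{-m}\sum_{n=0}^N|\Delta^m\alpha_n|^2+O_m(1)$. The mechanism is that $(1-P^{-1})^m$ is the $\ell^2$-adjoint of $(-1)^m(P-1)^m$; moving $m$ differences onto $\overline{\alpha_n}$ by repeated use of Lemma~\ref{lem:discrete-summation-by-parts} produces only endpoint terms within distance $m$ of $0$ and $N$, each bounded since $|\alpha_n|<1$. Chaining Steps 1 and 2 gives \eqref{eq:quadratic-coercivity-identity}.

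\emph{Step 3: the lower bound.} The identity bounds the $O_m(1)$ error by some $C_m$, which gives \eqref{eq:quadratic-coercivity-lower-bound} at once.

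The only genuine obstacle is Step 1, namely matching Yan's normalization of the degree-two component to $H_m$ exactly, including the sign and the factor $2^{-m}$. I would verify this directly for $m=1$. There $H_1(P)=1-\tfrac12(P+P^{-1})$, so the critical part should be
\[
-\tfrac12\bigl(\alpha_{n+1}\overline{\alpha_n}+\alpha_{n-1}\overline{\alpha_n}\bigr)+|\alpha_n|^2,
\]
whose finite sum is $\tfrac12\sum|\Delta\alpha_n|^2+O(1)$. For general $m$, I would then rely on the fact that the degree-two piece of the BSZ/Yan sum rule is exactly the Fourier pairing of the weight with $\alpha$.
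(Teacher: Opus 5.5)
Your proposal is correct and follows the paper's proof essentially verbatim. The paper likewise takes the identification \eqref{eq:quad-Qm-Fourier-form} of $Q_m^{(N)}$ with the finite-section quadratic form of symbol $H_m$ as given by Yan's model, applies Lemma~\ref{lem:finite-volume-difference-identity}, and reads off the lower bound; your extra checks (the diagonal constant $-1$ recovering exactly the $H_m$-form, the shift normalization via Lemma~\ref{lem:finite-section-shift-stability}, and the explicit $m=1$ computation) are sound but go beyond what the paper's argument uses.
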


\begin{proof}
By the quadratic part of Yan's finite-volume algebraic model,
$Q_m^{(N)}(\alpha)$
is the finite-section quadratic form with symbol $H_m$, modulo uniformly
bounded boundary contributions. This gives
$Q_m^{(N)}(\alpha)
        =
        \sum_{n=0}^{N}
        \sum_{\ell=-m}^{m}
        h_{m,\ell}\alpha_{n+\ell}\overline{\alpha_n}
        +
        O_m(1).$
Applying Lemma~\ref{lem:finite-volume-difference-identity}, we obtain
$Q_m^{(N)}(\alpha)
        =
        2^{-m}
        \sum_{n=0}^{N}|\Delta^m\alpha_n|^2
        +
        O_m(1).$
This proves \eqref{eq:quadratic-coercivity-identity}. The lower bound
\eqref{eq:quadratic-coercivity-lower-bound} follows immediately.
\end{proof}
\begin{remark}
\label{rem:quadratic-critical-vs-constant}
The word ``critical'' refers to diagonal specialization, not to the absence
of a zeroth-shift term. Before the constant/critical splitting, the
degree-two homogeneous term contains the constant diagonal contribution
$-|\alpha_n|^2$. This contribution is used in
Section~\ref{sec:constant-log-tail} to cancel the first Taylor coefficient
of $\log(1/(1-|\alpha_n|^2))$.

After this cancellation, the remaining quadratic critical part has symbol
$H_m(e^{i\theta})
        =
        2^{-m}|1-e^{i\theta}|^{2m}.$
Although this symbol has a nonzero Fourier coefficient at frequency $0$,
its diagonal specialization is
$H_m(1)=0.$
Thus the quadratic critical term is coercive through its vanishing at
$P=1$, and the resulting positive quantity is the $m$-th difference
energy rather than a zeroth-order $\ell^2$ term.
\end{remark}

\subsection{Equivalent shift-polynomial form}
\label{subsec:quadratic-shift-polynomial-form}

For later use, we also record the same computation in a shift-polynomial
form. Since
$P^mH_m(P)
        =
        2^{-m}(P-1)^{2m},$
the quadratic symbol has a zero of order $2m$ at $P=1$. Half of this
vanishing appears on the holomorphic side and half on the antiholomorphic
side. In coefficient variables, this gives
$m$
finite differences on $\alpha$ and $m$ finite differences on
$\overline{\alpha}$. Thus, modulo bounded telescoping terms,
\begin{equation}
\label{eq:quadratic-shift-polynomial-normal-form}
        \bigl[\phi_2
        (\mathcal Y_{1,\mathrm{crit}}^{(m)})\bigr]_n
        \equiv
        2^{-m}|\Delta^m\alpha_n|^2
        \,\,
        \mathrm{mod}\ \mathfrak T_m.
\end{equation}
Equivalently,
\begin{equation}
\label{eq:quadratic-shift-polynomial-finite-section}
        \sum_{n=0}^{N}
        \bigl[\phi_2
        (\mathcal Y_{1,\mathrm{crit}}^{(m)})\bigr]_n
        =
        2^{-m}
        \sum_{n=0}^{N}|\Delta^m\alpha_n|^2
        +
        O_m(1).
\end{equation}

This formulation matches the telescoping calculus of
Section~\ref{sec:remainder-telescoping}. It will also serve as the quadratic
base case for the higher critical normal-form analysis.

\subsection{Partial coercive estimate}
\label{subsec:partial-coercive-estimate}

Combining Proposition~\ref{prop:quadratic-coercivity} with the logarithmic
tail estimate from Section~\ref{sec:constant-log-tail}, we obtain the
coercive estimate before treating the higher critical terms.

Recall that
$$L_{m,n}
        =
        \log\frac1{1-|\alpha_n|^2}
        -
        \sum_{k=1}^{m}
        \frac{|\alpha_n|^{2k}}{k}.$$
By Lemma~\ref{lem:logarithmic-tail},
$L_{m,n}
        \ge
        \frac1{m+1}|\alpha_n|^{2m+2}.$

Using \eqref{eq:constant-section-critical-form}, Proposition~\ref{prop:quadratic-coercivity},
and Lemma~\ref{lem:logarithmic-tail}, we obtain
\begin{equation}
\label{eq:partial-coercive-estimate-before-R}
\begin{aligned}
        \mathcal K_m^{(N)}(\mu)
        &\ge
        2^{-m}
        \sum_{n=0}^{N}|\Delta^m\alpha_n|^2
        +
        \frac1{m+1}
        \sum_{n=0}^{N}|\alpha_n|^{2m+2}
\\
        &\,\,
        +
        R_m^{(N)}(\alpha)
        -
        C_m.
\end{aligned}
\end{equation}
Thus the only remaining task is to prove that
$R_m^{(N)}(\alpha)$
is infinitesimally absorbable with respect to the two positive quantities on
the right-hand side of
\eqref{eq:partial-coercive-estimate-before-R}. This will be done in the next
sections by proving diagonal vanishing, converting it into BSZ normal form,
and applying the absorption estimates for normal-form monomials.

\section{The recursive structure and diagonal vanishing}
\label{sec:recursive-diagonal-vanishing}

In this section we record the algebraic property of Yan's model which is
responsible for the normal-form estimates used later. The key point is that
the higher critical pieces vanish to a definite order on the diagonal
configuration. For the degree $2k$ critical term associated with the weight
$H_m(e^{i\theta})=(1-\cos\theta)^m$, the critical expression vanishes on
the diagonal to order at least $m+1-k$. This lower bound is the number of
discrete differences needed in the BSZ normal-form estimates in the next
section.

The proof uses the recursive structure of Yan's algebraic expressions. We
state the recursion in the form needed for the present argument. This is a
finite-dimensional algebraic statement about the homogeneous coefficient
expressions in the higher-order OPUC sum rules; see Yan
\cite{Yan2018}, and also the higher-order sum-rule framework of
Breuer--Simon--Zeitouni \cite{BSZ-Duke2018,BSZ-pedagogical2018}.

\subsection{The diagonal ideal}
\label{subsec:diagonal-ideal}

Fix $k\ge1$. A balanced local monomial of degree $2k$ has the form
\begin{equation}
\label{eq:diag-balanced-monomial}
        \prod_{\nu=1}^{k}\alpha_{n+i_\nu}
        \prod_{\mu=1}^{k}\overline{\alpha}_{n+j_\mu}.
\end{equation}
It is useful to encode its shifts by formal variables. We write such a
monomial symbolically as
\begin{equation}
\label{eq:diag-formal-monomial}
        \prod_{\nu=1}^{k}x_\nu^{i_\nu}
        \prod_{\mu=1}^{k}y_\mu^{j_\mu},
\end{equation}
where the variables $x_\nu$ act on the $\alpha$-factors and the variables
$y_\mu$ act on the $\overline{\alpha}$-factors. Thus a homogeneous local
expression of degree $2k$ is represented by a Laurent polynomial in
$x_1,\ldots,x_k,y_1,\ldots,y_k.$

The diagonal specialization is obtained by setting all shift variables equal
to $1$:
\begin{equation}
\label{eq:diag-specialization}
        x_1=\cdots=x_k=y_1=\cdots=y_k=1.
\end{equation}
We denote this operation by
$\operatorname{diag}.$
Thus, if $\mathcal P$ is such a Laurent polynomial, then
$\operatorname{diag}\mathcal P
        =
        \mathcal P(1,\ldots,1).$

All ideals below are understood in the Laurent polynomial ring
$\mathbb C[x_1^{\pm1},\ldots,x_k^{\pm1},
        y_1^{\pm1},\ldots,y_k^{\pm1}].$

Let $\mathfrak I_k$ be the diagonal ideal generated by
$x_1-1,\ldots,x_k-1,
        y_1-1,\ldots,y_k-1.$
Thus
\begin{equation}
\label{eq:diagonal-ideal-definition}
        \mathfrak I_k
        =
        \bigl(
        x_1-1,\ldots,x_k-1,
        y_1-1,\ldots,y_k-1
        \bigr).
\end{equation}
For $q\ge1$, membership in $\mathfrak I_k^q$ means vanishing to order at
least $q$ on the diagonal. Equivalently, $\mathcal P\in\mathfrak I_k^q$
if and only if all mixed derivatives of total order $<q$ vanish at the
diagonal.

It is convenient to use Euler derivatives
$D_{x_\nu}=x_\nu\frac{\partial}{\partial x_\nu},
        \,\,
        D_{y_\mu}=y_\mu\frac{\partial}{\partial y_\mu}.$
Since all variables are equal to $1$ on the diagonal, Euler derivatives
and ordinary derivatives determine the same jets at the diagonal point.
Thus $\mathcal P\in\mathfrak I_k^q$ if and only if
\begin{equation}
\label{eq:diagonal-vanishing-derivative-condition}
        \left.
        D_{x_1}^{a_1}\cdots D_{x_k}^{a_k}
        D_{y_1}^{b_1}\cdots D_{y_k}^{b_k}
        \mathcal P
        \right|_{x=y=1}
        =
        0
\end{equation}
whenever
\begin{equation}
\label{eq:diagonal-vanishing-total-order}
        a_1+\cdots+a_k+b_1+\cdots+b_k
        <
        q.
\end{equation}

\begin{definition}[Diagonal vanishing order]
\label{def:diagonal-vanishing-order}
We say that a degree $2k$ algebraic expression $\mathcal P$ has diagonal
vanishing order at least $q$ if $ \mathcal P\in\mathfrak I_k^q.$
\end{definition}

The critical part introduced in Section~\ref{sec:yan-finite-volume} is
precisely the part with zero diagonal value:
\begin{equation}
\label{eq:critical-zero-diagonal}
        \operatorname{diag}
        \mathcal Y_{k,\mathrm{crit}}^{(m)}
        =
        0.
\end{equation}
The point of this section is that much more is true: the critical part
vanishes to order $m+1-k$.

\subsection{Yan's recursion}
\label{subsec:yan-recursion}

We now recall the structural form of Yan's recursion. Let
$\mathcal Y_k^{(m)}$
be the degree $2k$ homogeneous algebraic expression associated with the
weight
$H_m(e^{i\theta})=(1-\cos\theta)^m.$
The expressions $\mathcal Y_k^{(m)}$ are defined for
$1\le k\le m.$
They satisfy a triangular recursion in the two indices $m$ and $k$. In
the form needed below, the recursion has the following properties.
\begin{proposition}[Structural consequence of Yan's triangular recursion]
\label{prop:yan-triangular-recursion}
Let $m\ge1$ and $1\le k\le m+1$. After subtracting the diagonal
constant parts, Yan's triangular recursion may be written in the form
\begin{equation}
\label{eq:yan-recursion-critical-abstract}
        \mathcal Y_{k,\mathrm{crit}}^{(m+1)}
        =
        \mathcal L_k^{(m)}
        \mathcal Y_{k,\mathrm{crit}}^{(m)}
        +
        \mathcal M_k^{(m)}
        \mathcal Y_{k-1,\mathrm{crit}}^{(m)}
        +
        \mathcal C_k^{(m)},
\end{equation}
with the convention that nonexistent terms are omitted. Here
$\mathcal L_k^{(m)}$ is generated by the part of the recursion which raises
the order $m$, while $\mathcal M_k^{(m)}$ is generated by the part which
raises the homogeneous degree from $2(k-1)$ to $2k$.

For every $r\ge0$, these operators satisfy
\begin{equation}
\label{eq:LM-ideal-mapping}
        \mathcal L_k^{(m)}(\mathfrak I_k^r)
        \subset
        \mathfrak I_k^{r+1},
        \,\,
        \mathcal M_k^{(m)}(\mathfrak I_{k-1}^r)
        \subset
        \mathfrak I_k^r,
\end{equation}
where $\mathfrak I_0^r$-terms are omitted. Moreover the correction term
coming from the subtraction of diagonal constants satisfies
\begin{equation}
\label{eq:C-ideal-mapping}
        \mathcal C_k^{(m)}
        \in
        \mathfrak I_k^{\,m+2-k}.
\end{equation}
In particular, $\mathcal L_k^{(m)}$ raises diagonal vanishing order by one,
$\mathcal M_k^{(m)}$ preserves diagonal vanishing order, and the constant
corrections do not affect the inductive lower bound.
\end{proposition}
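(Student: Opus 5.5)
The plan is to derive the critical recursion \eqref{eq:yan-recursion-critical-abstract} directly from Yan's triangular recursion for the full expressions $\mathcal Y_k^{(m)}$, and then to read off the ideal-mapping properties from the explicit form of the two recursion operators. In \cite{Yan2018} the order-$(m+1)$ expressions arise from the order-$m$ ones by multiplying the weight by one more factor $\tfrac12(1-z)(1-z^{-1})$. At the level of shift variables this splits into two parts. The first part multiplies a degree-$2k$ expression by a Laurent polynomial of the form $\tfrac12(1-u)(1-v^{-1})$, where $u,v$ are products of the $x_\nu$, $y_\mu$ attached to one insertion; this part defines $\mathcal L_k^{(m)}$. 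The second part inserts one new pair $(x_k,y_k)$ into a degree-$2(k-1)$ expression, with a Hall--Littlewood type kernel; this part defines $\mathcal M_k^{(m)}$. I would first write the full recursion $\mathcal Y_k^{(m+1)}=\mathcal L_k^{(m)}\mathcal Y_k^{(m)}+\mathcal M_k^{(m)}\mathcal Y_{k-1}^{(m)}+(\text{lower terms})$. I would then substitute $\mathcal Y=\mathcal Y_{\mathrm{const}}+\mathcal Y_{\mathrm{crit}}$ with $\mathcal Y_{k,\mathrm{const}}^{(m)}=-1/k$ from Proposition~\ref{prop:constant-part-minus-one-over-k}. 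Everything generated by the constants, minus the new constant $-1/k$, is collected into $\mathcal C_k^{(m)}$.

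The ideal mapping \eqref{eq:LM-ideal-mapping} would then be checked as follows. The map $\mathcal L_k^{(m)}$ is multiplication by a polynomial that vanishes at the diagonal point, because $1-u$ does. Since $\mathfrak I_k\cdot\mathfrak I_k^r\subset\mathfrak I_k^{r+1}$, it raises the vanishing order by one. For $\mathcal M_k^{(m)}$, the embedding $\mathbb C[x_1^{\pm},\dots,y_{k-1}^{\pm}]\hookrightarrow\mathbb C[x_1^{\pm},\dots,y_k^{\pm}]$ sends $\mathfrak I_{k-1}$ into $\mathfrak I_k$. Multiplying by the insertion kernel, and any symmetrization in the variables, preserves powers of $\mathfrak I_k$, because permutations of variables fix the diagonal point. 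Hence $\mathfrak I_{k-1}^r$ maps into $\mathfrak I_k^r$. The one subtle case is a kernel with denominators $(1-x_iy_j)$, as in Hall--Littlewood formulas. There I would use Yan's polynomiality, namely that the output is a genuine Laurent polynomial, and argue on jets: order of vanishing at a point is additive under multiplication and division, so preservation still holds after clearing denominators.

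The main obstacle is \eqref{eq:C-ideal-mapping}. Here $\mathcal C_k^{(m)}=-\tfrac1{k}\mathcal L_k^{(m)}(1)-\tfrac1{k-1}\mathcal M_k^{(m)}(1)+\tfrac1k$, and a priori it lies only in $\mathfrak I_k^1$. To show $\mathcal C_k^{(m)}\in\mathfrak I_k^{\,m+2-k}$ I would argue by an auxiliary induction on $m$. By Lemma~\ref{lem:yan-moment-cancellation-divisibility} in several variables, it suffices that all Euler-derivative moments of order $<m+2-k$ vanish at the diagonal. I would compute these moments by evaluating on a one-parameter family in which all variables equal $e^{it}$ and using the generating function for the sum rule in $t$. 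The constants $-1/k$ are exactly the Taylor coefficients of $\log(1-|\alpha|^2)$, so the sum rule evaluated on constant sequences $\alpha_n\equiv a$ forces these moments to vanish. Indeed, in that case the spectral side is $\int H_{m+1}\log(\cdot)$ with $H_{m+1}$ vanishing to order $2m+2$, which kills every low-order jet. I expect that converting this one-parameter vanishing into vanishing of all mixed jets, via symmetry of $\mathcal C_k^{(m)}$ under permutations of the $x$'s and of the $y$'s and the balanced structure, will require the most care. If it fails, I would fall back on verifying the jet identity directly from Yan's explicit Hall--Littlewood formula by induction.
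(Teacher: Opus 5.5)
The paper gives no derivation of this proposition. It is imported from \cite[Section~2]{Yan2018} (Remark~\ref{rem:yan-recursion-reference}), so your attempt has to stand on its own, and it has a genuine gap exactly where you expect one.

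First observe that, with $\mathcal L_k^{(m)}$ and $\mathcal M_k^{(m)}$ unspecified, the proposition says nothing beyond Theorem~\ref{thm:diagonal-vanishing-critical}. The choice $\mathcal L=\mathcal M=0$, $\mathcal C_k^{(m)}=\mathcal Y_{k,\mathrm{crit}}^{(m+1)}$ satisfies it precisely when the theorem holds at order $m+1$. In your scheme, once the mapping properties and the inductive hypothesis are granted, $\mathcal C_k^{(m)}\in\mathfrak I_k^{\,m+2-k}$ is \emph{equivalent} to $\mathcal Y_{k,\mathrm{crit}}^{(m+1)}\in\mathfrak I_k^{\,m+2-k}$. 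So the recursion reduces nothing. Everything rests on the jet vanishing of $\mathcal C_k^{(m)}$, and the special sequences you propose cannot detect it.

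Constant sequences $\alpha_n\equiv a$ send every balanced degree-$2k$ monomial to $|a|^{2k}$, so they see only $\operatorname{diag}\mathcal C_k^{(m)}$. The only one-parameter family realised by sequences is $\alpha_n=ae^{int}$, which evaluates at $x_\nu=e^{it}$, $y_\mu=e^{-it}$. The point ``all variables equal to $e^{it}$'' is not realised by any sequence and is incompatible with $x_1\cdots x_ky_1\cdots y_k=1$. The $t$-jet of this family gives only the moments $\sum c_{\mathbf i,\mathbf j}\bigl(\sum_\nu i_\nu-\sum_\mu j_\mu\bigr)^\ell$, one functional per order. Corollary~\ref{cor:critical-moment-cancellations} requires all mixed moments.

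Permutation symmetry does not bridge this for $k\ge2$. For example, $(x_1-x_2)^2$ is balanced, symmetric in the $x$'s and in the $y$'s, and vanishes identically along the whole family. Yet it has order exactly $2$ at the diagonal point, even modulo $x_1x_2y_1y_2-1$. In addition, for $a\neq0$ the spectral side is $+\infty$ for all these sequences, since the corresponding rotated Geronimus measures have $w=0$ on an arc. Even the diagonal information would therefore need a per-site, density-of-states version of the sum rule, which you do not provide.

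Your argument for $\mathcal M_k^{(m)}(\mathfrak I_{k-1}^r)\subset\mathfrak I_k^r$ also fails in the case you single out. Additivity of the vanishing order gives $\operatorname{ord}(PN/D)=\operatorname{ord}P+\operatorname{ord}N-\operatorname{ord}D$. Hall--Littlewood or Cauchy-type denominators such as $1-x_iy_j$ or $x_i-x_j$ vanish at the diagonal point, so they \emph{lower} the order unless the numerator compensates. Polynomiality of the output does not prevent this loss: $\bigl(f(x_1)-f(x_2)\bigr)/(x_1-x_2)$ is a polynomial of order $r-1$ when $f(x)=(x-1)^r$. The paper's own quartic formula \eqref{eq:Yan-quartic-critical-principal-part} shows the phenomenon: a numerator of order $2m$, two linear denominators vanishing at the diagonal, and a quotient of order $2m-2$.

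What is missing is a closed formula for $\mathcal Y_k^{(m)}$ in all $2k$ variables, for a fixed representative in Yan's quotient algebra, of the difference-quotient type that Section~\ref{sec:absorption-coercive-estimate} displays for $k=2$. From such a formula you read off the vanishing order by comparing the order-$2m$ zero of $H_m$ at $1$ with the number of denominators vanishing at the diagonal. Then neither the recursion nor any $\mathcal C_k^{(m)}$ is needed. Your fallback, ``verify the jet identity directly from Yan's explicit formula'', is precisely this missing step, not a backup to it.
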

\begin{remark}
\label{rem:yan-recursion-reference}
Proposition~\ref{prop:yan-triangular-recursion} is the form of Yan's
algebraic recursion needed here. It is not an additional analytic input:
it follows from the finite-dimensional recursion for the homogeneous
coefficient expressions in Yan's representation of the higher-order OPUC
sum rules; see \cite[Section~2]{Yan2018}. The only features used below are
triangularity in $k$, the gain of one diagonal factor when $m$ is
increased, and the fact that the diagonal constant corrections have already
been separated off in Section~\ref{sec:constant-log-tail}.
\end{remark}

\subsection{The diagonal vanishing theorem}
\label{subsec:diagonal-vanishing-theorem}

We now prove the main algebraic consequence of the triangular recursion.
The result asserts that each critical component vanishes to the order
needed for the subsequent normal-form and absorption arguments.

\begin{theorem}[Diagonal vanishing of the critical part]
\label{thm:diagonal-vanishing-critical}
Let $m\ge1$ and $1\le k\le m$. Then
\begin{equation}
\label{eq:diagonal-vanishing-critical-main}
        \mathcal Y_{k,\mathrm{crit}}^{(m)}
        \in
        \mathfrak I_k^{\,m+1-k}.
\end{equation}
Equivalently, the degree $2k$ critical part has vanishing order at least
$m+1-k$ with respect to the diagonal ideal $\mathfrak I_k$.
\end{theorem}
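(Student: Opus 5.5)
The natural route is induction on $m$, using Proposition~\ref{prop:yan-triangular-recursion} as the only algebraic input. The claim to propagate is $P(m)$: for every $1\le k\le m$, $\mathcal Y_{k,\mathrm{crit}}^{(m)}\in\mathfrak I_k^{\,m+1-k}$. Since the critical part has zero diagonal value by construction \eqref{eq:critical-zero-diagonal}, every critical piece automatically lies in $\mathfrak I_k^1$. This is how the base case and the boundary cases of the induction will be handled.

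\emph{Base case.} For $m=1$ only $k=1$ occurs, and the required order is $m+1-k=1$. So the statement is exactly $\operatorname{diag}\mathcal Y_{1,\mathrm{crit}}^{(1)}=0$, which holds by definition. (Proposition~\ref{prop:quadratic-coercivity} also gives it, since the quadratic symbol $2^{-1}|1-P|^2$ vanishes at $P=1$.)

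\emph{Inductive step.} Assume $P(m)$ and fix $1\le k\le m+1$. Write $\mathcal Y_{k,\mathrm{crit}}^{(m+1)}$ using \eqref{eq:yan-recursion-critical-abstract} and treat the three terms separately; the target order is $(m+1)+1-k=m+2-k$.
\begin{enumerate}[label=(\roman*)]
\item \emph{The $\mathcal L$-term.} This term exists only for $k\le m$. The hypothesis gives $\mathcal Y_{k,\mathrm{crit}}^{(m)}\in\mathfrak I_k^{m+1-k}$, and \eqref{eq:LM-ideal-mapping} raises the order by one, so the term lies in $\mathfrak I_k^{m+2-k}$.
\item \emph{The $\mathcal M$-term.} This term exists for $k\ge2$, and then $k-1\le m$. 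The hypothesis gives $\mathcal Y_{k-1,\mathrm{crit}}^{(m)}\in\mathfrak I_{k-1}^{m+1-(k-1)}=\mathfrak I_{k-1}^{m+2-k}$. Since $\mathcal M_k^{(m)}$ preserves order, the term lies in $\mathfrak I_k^{m+2-k}$.
\item \emph{The correction term.} \eqref{eq:C-ideal-mapping} gives $\mathcal C_k^{(m)}\in\mathfrak I_k^{m+2-k}$ directly.
\end{enumerate}
Adding the three contributions inside the ideal $\mathfrak I_k^{m+2-k}$ proves $P(m+1)$.

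The edge cases need care because of the omission convention. At $k=1$ the $\mathcal M$-term is absent. At $k=m+1$ the $\mathcal L$-term is absent, since $\mathcal Y_{m+1}^{(m)}$ does not exist. There the required order is $1$, and it also follows from the zero diagonal value, so consistency is automatic.

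The only real obstacle is not the induction but the structural input. I must check that the ideal-mapping properties \eqref{eq:LM-ideal-mapping} and \eqref{eq:C-ideal-mapping} genuinely follow from Yan's recursion. In particular, the claim that raising $m$ multiplies by a factor lying in $\mathfrak I_k$ should come from the extra factor $(1-P)(1-P^{-1})$ in $H_{m+1}=\tfrac12(1-P)(1-P^{-1})H_m$. That factor lifts to the multivariable symbol $\bigl(1-\prod x_\nu\prod y_\mu^{-1}\bigr)$-type expressions, which vanish on the diagonal. Since the paper takes Proposition~\ref{prop:yan-triangular-recursion} as given, I would cite it and note this mechanism in a remark, keeping the proof itself to the induction above.
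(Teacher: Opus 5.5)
Your proposal is correct and is essentially the paper's own proof: it runs induction on $m$ through the critical recursion of Proposition~\ref{prop:yan-triangular-recursion}, places the $\mathcal L$-term, the $\mathcal M$-term and $\mathcal C_k^{(m)}$ each in $\mathfrak I_k^{\,m+2-k}$, and handles the base case $m=k=1$ and the omitted terms at $k=1$ and $k=m+1$ exactly as the paper does. As you note, all the real content sits in \eqref{eq:LM-ideal-mapping} and \eqref{eq:C-ideal-mapping}, which the paper also only asserts (citing Yan); your suggested mechanism of a common diagonal-vanishing factor inherited from $H_{m+1}=\tfrac12(1-P)(1-P^{-1})H_m$ really only describes $k=1$, because for $k\ge2$ the order $m$ enters through divided differences of $H_m$ (compare the formula for $F_m^{\rm Yan}$), so keep it as a heuristic remark rather than a justification.
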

\begin{proof}
We argue by induction on $m$.

For $m=1$, only $k=1$ occurs. By definition,
$\mathcal Y_{1,\mathrm{crit}}^{(1)}
        =
        \mathcal Y_1^{(1)}
        -
        \mathcal Y_{1,\mathrm{const}}^{(1)} .$
Thus the critical part has zero value at the diagonal point. Equivalently,
$\mathcal Y_{1,\mathrm{crit}}^{(1)}
        \in
        \mathfrak I_1 .$
This is exactly \eqref{eq:diagonal-vanishing-critical-main}, since
$m+1-k=1$ when $m=k=1$.

Assume now that the claim holds at order $m$. We prove it at order
$m+1$. Fix $1\le k\le m+1$. By the critical triangular recursion
\eqref{eq:yan-recursion-critical-abstract}, with nonexistent terms omitted,
$\mathcal Y_{k,\mathrm{crit}}^{(m+1)}
        =
        \mathcal L_k^{(m)}
        \mathcal Y_{k,\mathrm{crit}}^{(m)}
        +
        \mathcal M_k^{(m)}
        \mathcal Y_{k-1,\mathrm{crit}}^{(m)}
        +
        \mathcal C_k^{(m)} .$
We consider the three possible contributions separately.

First, suppose $k\le m$. By the induction hypothesis,
$\mathcal Y_{k,\mathrm{crit}}^{(m)}
        \in
        \mathfrak I_k^{\,m+1-k}.$
Using the ideal mapping property
$\mathcal L_k^{(m)}(\mathfrak I_k^r)
        \subset
        \mathfrak I_k^{r+1}$
from Proposition~\ref{prop:yan-triangular-recursion}, we obtain
$\mathcal L_k^{(m)}
        \mathcal Y_{k,\mathrm{crit}}^{(m)}
        \in
        \mathfrak I_k^{\,m+2-k}.$
If $k=m+1$, this contribution is absent by convention.

Second, suppose $k\ge2$. By the induction hypothesis applied to $k-1$,
$\mathcal Y_{k-1,\mathrm{crit}}^{(m)}
        \in
        \mathfrak I_{k-1}^{\,m+1-(k-1)}
        =
        \mathfrak I_{k-1}^{\,m+2-k}.$
Using the ideal mapping property
$\mathcal M_k^{(m)}(\mathfrak I_{k-1}^r)
        \subset
        \mathfrak I_k^r$
from Proposition~\ref{prop:yan-triangular-recursion}, we get
$\mathcal M_k^{(m)}
        \mathcal Y_{k-1,\mathrm{crit}}^{(m)}
        \in
        \mathfrak I_k^{\,m+2-k}.$
If $k=1$, this contribution is absent by convention.

Finally, the correction term satisfies
$\mathcal C_k^{(m)}
        \in
        \mathfrak I_k^{\,m+2-k}$
by Proposition~\ref{prop:yan-triangular-recursion}. Therefore every
contribution to $\mathcal Y_{k,\mathrm{crit}}^{(m+1)}$ belongs to
$\mathfrak I_k^{\,m+2-k}$. Hence
$\mathcal Y_{k,\mathrm{crit}}^{(m+1)}
        \in
        \mathfrak I_k^{\,m+2-k}
        =
        \mathfrak I_k^{\,(m+1)+1-k}.$
This proves the desired statement at order $m+1$, and the induction is
complete.
\end{proof}

\begin{remark}
\label{rem:diagonal-vanishing-extreme-cases}
The two extreme cases are worth noting. When $k=1$, the theorem gives
$\mathcal Y_{1,\mathrm{crit}}^{(m)}
        \in
        \mathfrak I_1^m .$
This is the algebraic origin of the $m$-th order diagonal difference in
the quadratic critical term. It is consistent with the quadratic coercivity
computation in Section~\ref{sec:quadratic-coercivity}, where the full
Hermitian quadratic symbol factors into $m$ differences on the
holomorphic side and $m$ differences on the antiholomorphic side.

At the other extreme, when $k=m$, the theorem gives only first-order
diagonal vanishing:
$\mathcal Y_{m,\mathrm{crit}}^{(m)}
        \in
        \mathfrak I_m .$
This is sufficient because the degree $2m$ term already contains many
undifferentiated factors and can be absorbed using the logarithmic
$\ell^{2m+2}$-control.
\end{remark}

\subsection{Moment cancellations}
\label{subsec:moment-cancellations}

Diagonal vanishing with respect to the ideal $\mathfrak I_k$ can be
expressed as moment cancellation for the shift coefficients. This is the
form most directly connected to the telescoping calculus of
Section~\ref{sec:remainder-telescoping}.

Let
$\mathcal P(x_1,\ldots,x_k,y_1,\ldots,y_k)
        =
        \sum_{\mathbf i,\mathbf j}
        c_{\mathbf i,\mathbf j}
        x_1^{i_1}\cdots x_k^{i_k}
        y_1^{j_1}\cdots y_k^{j_k}$
be a balanced degree $2k$ shift polynomial, where
$\mathbf i=(i_1,\ldots,i_k),
        \,\,
        \mathbf j=(j_1,\ldots,j_k).$
Write
$D_z:=z\partial_z$
for the Euler derivative. Applying Euler derivatives and then evaluating at
the diagonal point
$x_1=\cdots=x_k=y_1=\cdots=y_k=1$
gives the moments of the shift coefficients:
\begin{equation}
\label{eq:moment-cancellation-multivariable}
        \left.
        D_{x_1}^{a_1}\cdots D_{x_k}^{a_k}
        D_{y_1}^{b_1}\cdots D_{y_k}^{b_k}
        \mathcal P
        \right|_{x_1=\cdots=x_k=y_1=\cdots=y_k=1}
        =
        \sum_{\mathbf i,\mathbf j}
        c_{\mathbf i,\mathbf j}
        i_1^{a_1}\cdots i_k^{a_k}
        j_1^{b_1}\cdots j_k^{b_k}.
\end{equation}
Indeed,
$D_{x_\ell}^{a_\ell} x_\ell^{i_\ell}
        =
        i_\ell^{a_\ell}x_\ell^{i_\ell},
        \,\,
        D_{y_\ell}^{b_\ell} y_\ell^{j_\ell}
        =
        j_\ell^{b_\ell}y_\ell^{j_\ell},$
and all variables are set equal to $1$.

Therefore Theorem~\ref{thm:diagonal-vanishing-critical} implies the
following moment conditions.

\begin{corollary}[Moment cancellations for critical coefficients]
\label{cor:critical-moment-cancellations}
Let $m\ge1$, $1\le k\le m$, and write
$\mathcal Y_{k,\mathrm{crit}}^{(m)}
        =
        \sum_{\mathbf i,\mathbf j}
        c_{\mathbf i,\mathbf j}^{(m,k)}
        x_1^{i_1}\cdots x_k^{i_k}
        y_1^{j_1}\cdots y_k^{j_k}.$
Then, for every multi-index
$(a_1,\ldots,a_k,b_1,\ldots,b_k)$
with
\begin{equation}
\label{eq:critical-moment-total-order}
        a_1+\cdots+a_k+b_1+\cdots+b_k
        <
        m+1-k,
\end{equation}
one has
\begin{equation}
\label{eq:critical-moment-cancellation}
        \sum_{\mathbf i,\mathbf j}
        c_{\mathbf i,\mathbf j}^{(m,k)}
        i_1^{a_1}\cdots i_k^{a_k}
        j_1^{b_1}\cdots j_k^{b_k}
        =
        0.
\end{equation}
\end{corollary}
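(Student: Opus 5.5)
This corollary follows directly from Theorem~\ref{thm:diagonal-vanishing-critical} together with the Euler-derivative characterization of powers of the diagonal ideal recorded in \eqref{eq:diagonal-vanishing-derivative-condition}--\eqref{eq:diagonal-vanishing-total-order}. The plan is as follows. Set $q=m+1-k\ge1$. By Theorem~\ref{thm:diagonal-vanishing-critical}, $\mathcal Y_{k,\mathrm{crit}}^{(m)}\in\mathfrak I_k^{\,q}$. We then show that every element of $\mathfrak I_k^{\,q}$ is annihilated at the diagonal point by every Euler monomial $D_{x_1}^{a_1}\cdots D_{y_k}^{b_k}$ of total order $<q$. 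Finally, we evaluate that jet on $\mathcal Y_{k,\mathrm{crit}}^{(m)}$ using the moment formula \eqref{eq:moment-cancellation-multivariable}.

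For the middle step, we use that $\mathfrak I_k^{\,q}$ is spanned, as a module over the Laurent ring, by products $g\prod_{s=1}^{q}(z_s-1)$, where each $z_s\in\{x_1,\dots,x_k,y_1,\dots,y_k\}$ and $g$ is a Laurent polynomial. Each Euler derivative is a derivation. Applying a product of fewer than $q$ of them to such an element and expanding by the Leibniz rule, every resulting term still contains at least one undifferentiated factor $z_s-1$, because there are fewer derivatives than factors. Each such term therefore vanishes at $x=y=1$. By linearity, the whole jet of order $<q$ vanishes at the diagonal.

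For the last step, apply the operator $D_{x_1}^{a_1}\cdots D_{x_k}^{a_k}D_{y_1}^{b_1}\cdots D_{y_k}^{b_k}$ termwise to the finite expansion $\sum c^{(m,k)}_{\mathbf i,\mathbf j}x^{\mathbf i}y^{\mathbf j}$. Since $D_z z^{i}=i\,z^{i}$, evaluating at the diagonal gives exactly $\sum c^{(m,k)}_{\mathbf i,\mathbf j}\,i_1^{a_1}\cdots j_k^{b_k}$, as in \eqref{eq:moment-cancellation-multivariable}. Combined with the vanishing from the previous paragraph, this yields \eqref{eq:critical-moment-cancellation} whenever the total order satisfies \eqref{eq:critical-moment-total-order}.

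No step here is a real obstacle, since the whole substance lies in Theorem~\ref{thm:diagonal-vanishing-critical}. The only point needing care is the Leibniz counting in the middle step, which must be done for mixed Euler derivatives rather than ordinary ones. This is harmless, because each $D_z$ is itself a derivation of the Laurent ring.
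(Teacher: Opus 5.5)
Your proposal is correct and follows essentially the same route as the paper: diagonal-vanishing theorem, vanishing of the order-$<m+1-k$ jet at $x=y=1$, then the moment formula $D_z z^i = i z^i$. The only minor difference is that you verify the Euler-jet vanishing directly by the Leibniz rule for the derivations $D_z$ acting on products of $q$ generators. The paper instead goes through ordinary derivatives and notes that Euler derivatives are triangular combinations of them at the diagonal point; your version is slightly more self-contained.
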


\begin{proof}
By Theorem~\ref{thm:diagonal-vanishing-critical},
$\mathcal Y_{k,\mathrm{crit}}^{(m)}
        \in
        \mathfrak I_k^{\,m+1-k}.$
Equivalently, all derivatives of total order $<m+1-k$ vanish at the
diagonal point
$x_1=\cdots=x_k=y_1=\cdots=y_k=1.$
Since Euler derivatives $D_z=z\partial_z$ are triangular combinations of
ordinary derivatives of order at most the same order at this point, the
same vanishing holds for all Euler derivatives of total order
$<m+1-k$. Applying
\eqref{eq:moment-cancellation-multivariable} to
$\mathcal P=\mathcal Y_{k,\mathrm{crit}}^{(m)}$ gives
\eqref{eq:critical-moment-cancellation}.
\end{proof}

In particular, taking all exponents
$a_1=\cdots=a_k=b_1=\cdots=b_k=0$
gives the zeroth-moment cancellation
\begin{equation}
\label{eq:critical-zeroth-moment}
        \sum_{\mathbf i,\mathbf j}
        c_{\mathbf i,\mathbf j}^{(m,k)}
        =
        0.
\end{equation}
This is the statement that the critical part has no contribution at the
diagonal point. Higher moment cancellations encode higher-order diagonal
vanishing.

\subsection{From diagonal vanishing to difference factors}
\label{subsec:diagonal-vanishing-to-differences}

The next section will convert diagonal vanishing into BSZ normal form. Here
we record the elementary algebraic implication which underlies that
conversion.

Let $\mathcal P\in\mathfrak I_k^q$. Since $\mathfrak I_k$ is generated by
the diagonal factors
$x_1-1,\ldots,x_k-1,
        y_1-1,\ldots,y_k-1,$
membership in $\mathfrak I_k^q$ means that $\mathcal P$ can be written as
a finite sum
\begin{equation}
\label{eq:ideal-power-expansion}
        \mathcal P
        =
        \sum_{\substack{\mathbf a,\mathbf b\\
        |\mathbf a|+|\mathbf b|=q}}
        (x_1-1)^{a_1}\cdots(x_k-1)^{a_k}
        (y_1-1)^{b_1}\cdots(y_k-1)^{b_k}
        \mathcal P_{\mathbf a,\mathbf b},
\end{equation}
where
$|\mathbf a|=a_1+\cdots+a_k,
        \,\,
        |\mathbf b|=b_1+\cdots+b_k,$
and the coefficients $\mathcal P_{\mathbf a,\mathbf b}$ are Laurent
polynomials with shifts bounded in terms of $m$.

After applying the coefficient map $\phi_{2k}$, each diagonal factor
$x_\nu-1$ or $y_\mu-1$ becomes a discrete difference acting on the
corresponding $\alpha$- or $\overline\alpha$-factor, up to bounded
shifts. Laurent monomial factors only contribute bounded shifts, and
backward differences are converted into forward differences using
$P^{-1}-1=-P^{-1}(P-1).$
Consequently, \eqref{eq:ideal-power-expansion} yields local monomials
containing a total of $q$ discrete differences.

Applying this to the critical expression and using
Theorem~\ref{thm:diagonal-vanishing-critical} gives the following
consequence.

\begin{corollary}[Difference-factor consequence]
\label{cor:difference-factor-consequence}
Let $m\ge1$ and $1\le k\le m$. Then, after applying the coefficient map,
the critical expression
$\bigl[\phi_{2k}
        (\mathcal Y_{k,\mathrm{crit}}^{(m)})\bigr]_n$
is a finite linear combination of local monomials containing at least
$m+1-k$
discrete differences distributed among the $2k$ factors. More explicitly,
it is a finite linear combination of terms of the form
\begin{equation}
\label{eq:difference-factor-monomial}
        \prod_{\nu=1}^{k}
        \left(\Delta^{a_\nu}\alpha\right)_{n+\ell_\nu}
        \prod_{\mu=1}^{k}
        \left(\Delta^{b_\mu}\overline{\alpha}\right)_{n+r_\mu},
\end{equation}
where
\begin{equation}
\label{eq:difference-factor-total-order}
        a_1+\cdots+a_k+b_1+\cdots+b_k
        \ge
        m+1-k.
\end{equation}
All shifts and coefficients are bounded in terms of $m$.
\end{corollary}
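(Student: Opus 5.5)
The plan is to combine Theorem~\ref{thm:diagonal-vanishing-critical} with the ideal-power expansion \eqref{eq:ideal-power-expansion}, and then apply the coefficient map factor by factor.

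\textbf{Step 1 (ideal expansion).} Theorem~\ref{thm:diagonal-vanishing-critical} gives $\mathcal Y_{k,\mathrm{crit}}^{(m)}\in\mathfrak I_k^{\,m+1-k}$. Since $\mathfrak I_k^q$ is generated by the products of exactly $q$ diagonal factors, $q=m+1-k$, we can write
\[
\mathcal Y_{k,\mathrm{crit}}^{(m)}=\sum_{|\mathbf a|+|\mathbf b|=q}\prod_{\nu}(x_\nu-1)^{a_\nu}\prod_\mu(y_\mu-1)^{b_\mu}\,\mathcal P_{\mathbf a,\mathbf b}.
\]
To keep shifts and coefficients bounded in terms of $m$, I will not use an abstract ideal-membership argument. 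Instead I will produce the expansion constructively, as a multivariable Taylor expansion at the diagonal point. Write each monomial $x_\nu^{i}$ as $(1+(x_\nu-1))^{i}$ for $i\ge0$. For $i<0$, write $x_\nu^{i}=x_\nu^{i}\cdot 1$ and use $x^{-1}-1=-x^{-1}(x-1)$, which keeps everything a Laurent polynomial. Expand in powers of the $(x_\nu-1)$ and $(y_\mu-1)$. All terms of total degree $<q$ then collect into the Taylor jet of order $<q$ at the diagonal. This jet vanishes by Corollary~\ref{cor:critical-moment-cancellations}, because the Euler-derivative moments are triangular combinations of ordinary derivatives. What remains is exactly an expansion of the displayed form, whose cofactors $\mathcal P_{\mathbf a,\mathbf b}$ are Laurent polynomials. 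Their exponents lie within the original (bounded) shift range, and their coefficients are binomial combinations of the $c^{(m,k)}_{\mathbf i,\mathbf j}$, hence bounded in terms of $m$.

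\textbf{Step 2 (coefficient map).} $\phi_{2k}$ acts multiplicatively factor by factor: each variable $x_\nu$ acts as the shift $P$ on the $\nu$-th $\alpha$-factor, and each $y_\mu$ acts on the $\mu$-th $\overline\alpha$-factor. Under this action, $(x_\nu-1)^{a_\nu}$ turns the $\nu$-th factor into $\Delta^{a_\nu}\alpha$, and likewise $(y_\mu-1)^{b_\mu}$ gives $\Delta^{b_\mu}\overline\alpha$, since $\Delta$ commutes with complex conjugation. Each Laurent monomial in $\mathcal P_{\mathbf a,\mathbf b}$ only adds bounded shifts $\ell_\nu,r_\mu$. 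Any backward differences are converted using $P^{-1}-1=-P^{-1}(P-1)$. Summing over the finitely many terms, we obtain a finite linear combination of the monomials \eqref{eq:difference-factor-monomial} with $\sum a_\nu+\sum b_\mu=m+1-k$, in particular $\ge m+1-k$.

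\textbf{Main obstacle.} The delicate point is the bookkeeping in Step~1, namely guaranteeing uniform bounds. The cofactors must have shifts and coefficients bounded in terms of $m$ only, and negative exponents must be handled so that no non-Laurent expressions appear. The constructive Taylor-expansion route controls both, because the expansion of a fixed finite Laurent polynomial involves only its own finitely many exponents.
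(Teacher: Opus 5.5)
Your proposal is the paper's argument: Theorem~\ref{thm:diagonal-vanishing-critical} places $\mathcal Y_{k,\mathrm{crit}}^{(m)}$ in $\mathfrak I_k^{\,m+1-k}$. You then expand it in products of exactly $q=m+1-k$ diagonal generators, and $\phi_{2k}$ turns each factor $x_\nu-1$ or $y_\mu-1$ into one difference on the corresponding $\alpha$- or $\overline{\alpha}$-factor.

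One detail of your Step~1 needs repair. With the prescription $x^{-1}-1=-x^{-1}(x-1)$, the low-degree terms carry Laurent-monomial coefficients rather than constants, so they are not literally the Taylor jet. For example, $x_1^{-1}-2+x_1\in\mathfrak I_1^2$, but your degree-one part is $(x_1-1)(1-x_1^{-1})\neq0$. The fix is to clear negative powers by a monomial first, Taylor-expand that polynomial exactly at the diagonal point (the terms of order $<q$ vanish), and divide back, as in Remark~\ref{rem:normal-form-laurent-polynomial-ideal}.

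The uniformity you flag as the main obstacle is automatic. $\mathcal Y_{k,\mathrm{crit}}^{(m)}$ is a single fixed Laurent polynomial for each $(m,k)$, so any expansion of it has finitely many terms whose shifts and coefficients depend only on $m$.
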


\begin{proof}
By Theorem~\ref{thm:diagonal-vanishing-critical},
$\mathcal Y_{k,\mathrm{crit}}^{(m)}
        \in
        \mathfrak I_k^{m+1-k}.$
Expanding the element of $\mathfrak I_k^{m+1-k}$ in the generators of the
diagonal ideal gives \eqref{eq:ideal-power-expansion} with
$q=m+1-k.$
Applying $\phi_{2k}$ sends each factor $x_\nu-1$ or $y_\mu-1$ to one
discrete difference on the corresponding factor, up to bounded shifts. This
gives \eqref{eq:difference-factor-monomial} and
\eqref{eq:difference-factor-total-order}.
\end{proof}

\begin{remark}
\label{rem:at-least-vs-exact-differences}
Corollary~\ref{cor:difference-factor-consequence} gives at least
$m+1-k$ differences. In the BSZ normal form used later, one may redistribute
differences and absorb terms with more differences into the same remainder
class. Thus the relevant count is effectively
$q=m+1-k.$
The distinction between ``at least'' and ``exactly'' is harmless for the
finite-volume estimates.
\end{remark}

\subsection{The higher critical terms}
\label{subsec:higher-critical-diagonal-vanishing}

We now specialize the preceding result to the higher critical contribution
$$R_m^{(N)}(\alpha)
        =
        \sum_{k=2}^{m}
        \sum_{n=0}^{N}
        \bigl[\phi_{2k}
        (\mathcal Y_{k,\mathrm{crit}}^{(m)})\bigr]_n.$$
For $2\le k\le m$, Theorem~\ref{thm:diagonal-vanishing-critical} gives
\begin{equation}
\label{eq:higher-critical-diagonal-vanishing}
        \mathcal Y_{k,\mathrm{crit}}^{(m)}
        \in
        \mathfrak I_k^{\,m+1-k}.
\end{equation}
Consequently, each degree $2k$ higher critical term contains at least
$m+1-k$
discrete differences after passage to local coefficient variables.

This is the precise algebraic input needed for the normal-form theorem. In
the terminology of Section~\ref{sec:remainder-telescoping}, the conclusion
may be summarized as follows:
\begin{equation}
\label{eq:higher-critical-normal-form-preview}
        \bigl[\phi_{2k}
        (\mathcal Y_{k,\mathrm{crit}}^{(m)})\bigr]_n
        \,\,
        \text{has normal-form difference count at least }
        m+1-k.
\end{equation}

The next section will show that every balanced local monomial of degree
$2k$ with at least $m+1-k$ difference factors is infinitesimally bounded,
in finite volume, by
$\sum_{n=0}^{N} |\Delta^m\alpha_n|^2
        +
        \sum_{n=0}^{N} |\alpha_n|^{2m+2}
        +
        O_m(1).$
Applying this to the normal-form representation of the higher critical
terms will prove that the full higher critical contribution belongs to the
absorbable remainder class $\mathfrak R_m$.

\section{From diagonal vanishing to BSZ normal form}
\label{sec:bsz-normal-form}
In this section we convert the diagonal vanishing established in
Section~\ref{sec:recursive-diagonal-vanishing} into the normal form used by
Breuer--Simon--Zeitouni. The basic principle is simple: every factor
$x-1$
in the shift algebra becomes a discrete difference after applying the
coefficient map. Thus membership in a power of the diagonal ideal produces
local monomials containing a prescribed number of finite differences.

The point of the BSZ normal form is to make this difference count explicit
and stable under finite-volume summation. The normal form obtained here is
purely algebraic; the analytic absorption of the resulting monomials will be
proved in the next section.

Throughout the section $m\ge1$ is fixed. All shifts and all constants are
allowed to depend on $m$, but never on the finite-volume parameter $N$.

\subsection{Balanced shift polynomials}
\label{subsec:balanced-shift-polynomials}

Let $k\ge1$. A balanced degree $2k$ local expression is represented by a
Laurent polynomial in formal shift variables
$x_1,\ldots,x_k,y_1,\ldots,y_k .$
The variables $x_\nu$ act on the $k$ holomorphic factors
$\alpha$, while the variables $y_\mu$ act on the $k$ antiholomorphic
factors $\overline{\alpha}$. Thus a monomial
$x_1^{i_1}\cdots x_k^{i_k}
        y_1^{j_1}\cdots y_k^{j_k}$
corresponds, after applying the coefficient map, to
\begin{equation}
\label{eq:normal-form-shift-monomial-map}
        \prod_{\nu=1}^{k}\alpha_{n+i_\nu}
        \prod_{\mu=1}^{k}\overline{\alpha}_{n+j_\mu}.
\end{equation}

All ideals in this subsection are taken in the Laurent polynomial ring
$\mathbb C[
        x_1^{\pm1},\ldots,x_k^{\pm1},
        y_1^{\pm1},\ldots,y_k^{\pm1}
        ] .$
The diagonal ideal is
\begin{equation}
\label{eq:normal-form-diagonal-ideal}
        \mathfrak I_k
        =
        \bigl(
        x_1-1,\ldots,x_k-1,
        y_1-1,\ldots,y_k-1
        \bigr).
\end{equation}

If a Laurent polynomial $\mathcal P$ belongs to
$\mathfrak I_k^q$, then it can be written as a finite sum
\begin{equation}
\label{eq:normal-form-ideal-expansion}
        \mathcal P
        =
        \sum_{\substack{\mathbf a,\mathbf b\\
        |\mathbf a|+|\mathbf b|=q}}
        (x_1-1)^{a_1}\cdots(x_k-1)^{a_k}
        (y_1-1)^{b_1}\cdots(y_k-1)^{b_k}
        \mathcal P_{\mathbf a,\mathbf b},
\end{equation}
where
$|\mathbf a|=a_1+\cdots+a_k,
        \,\,
        |\mathbf b|=b_1+\cdots+b_k,$
and the coefficients $\mathcal P_{\mathbf a,\mathbf b}$ are Laurent
polynomials. Since $\mathfrak I_k^q$ is generated by products of exactly $q$
diagonal generators, higher powers may be absorbed into the Laurent
polynomial coefficients $\mathcal P_{\mathbf a,\mathbf b}$.

Since all algebraic expressions in the present paper have bounded shift
range, the shift ranges in \eqref{eq:normal-form-ideal-expansion} are
bounded in terms of $m$.

\begin{remark}
\label{rem:normal-form-laurent-polynomial-ideal}
The expansion \eqref{eq:normal-form-ideal-expansion} is just the statement
that $\mathfrak I_k^q$ is generated by all products of $q$ diagonal
generators. Because we work with Laurent polynomials, one may first multiply
by a harmless monomial to reduce to ordinary polynomials and then divide
back. This only changes bounded shifts and has no effect on finite-volume
estimates.
\end{remark}

\subsection{Difference monomials}
\label{subsec:difference-monomials}

We recall and slightly refine the normal-form terminology introduced in
Section~\ref{sec:remainder-telescoping}.

A \emph{BSZ normal-form monomial} of degree $2k$ and difference count
$q$ is a local expression of the form
\begin{equation}
\label{eq:BSZ-normal-form-monomial}
        M_n
        =
        \prod_{\nu=1}^{k}
        \left(\Delta^{a_\nu}\alpha\right)_{n+\ell_\nu}
        \prod_{\mu=1}^{k}
        \left(\Delta^{b_\mu}\overline{\alpha}\right)_{n+r_\mu},
\end{equation}
where
\begin{equation}
\label{eq:BSZ-normal-form-total-count}
        a_\nu,b_\mu\ge0,
        \,\,
        \sum_{\nu=1}^{k}a_\nu
        +
        \sum_{\mu=1}^{k}b_\mu
        =
        q.
\end{equation}
The shifts
$\ell_\nu,r_\mu$
are bounded in terms of $m$. We denote by
$\mathcal N_{2k,q}^{(m)}$
the finite-dimensional span of all such monomials with shift ranges bounded
by a constant depending only on $m$.

We shall also use a slightly larger class, allowing at least $q$
differences:
\begin{equation}
\label{eq:BSZ-normal-form-at-least}
        \mathcal N_{2k,\ge q}^{(m)}
        =
        \sum_{s\ge q}
        \mathcal N_{2k,s}^{(m)}.
\end{equation}
Since all shift ranges and all degrees are bounded in terms of $m$, the sum
in \eqref{eq:BSZ-normal-form-at-least} is finite in every application.

The elementary observation behind the normal form is the identity
$(x_\nu-1)\alpha_{n+i}
        =
        \alpha_{n+i+1}-\alpha_{n+i}
        =
        (\Delta\alpha)_{n+i},$
and similarly
$(y_\mu-1)\overline{\alpha}_{n+j}
        =
        \overline{\alpha}_{n+j+1}-\overline{\alpha}_{n+j}
        =
        (\Delta\overline{\alpha})_{n+j}.$
Higher powers give higher finite differences:
\begin{equation}
\label{eq:shift-generator-to-difference}
        (x_\nu-1)^a\alpha_{n+i}
        =
        (\Delta^a\alpha)_{n+i},
        \,\,
        (y_\mu-1)^b\overline{\alpha}_{n+j}
        =
        (\Delta^b\overline{\alpha})_{n+j}.
\end{equation}
Laurent monomial factors only change the shifts. If a negative power produces
a backward difference, it is rewritten as a forward difference using
identities such as
$P^{-1}-1=-P^{-1}(P-1),$
at the cost of bounded shifts and harmless signs.

\subsection{The algebraic normal-form map}
\label{subsec:algebraic-normal-form-map}

Let
$\mathcal P(x_1,\ldots,x_k,y_1,\ldots,y_k)$
be a balanced degree $2k$ shift polynomial. We write
$[\phi_{2k}(\mathcal P)]_n$
for the corresponding local expression in the Verblunsky coefficients.

The following proposition is the basic conversion from diagonal vanishing to
normal form.

\begin{proposition}[Diagonal ideal to normal form]
\label{prop:diagonal-ideal-to-normal-form}
Let $q\ge0$. If
$\mathcal P\in\mathfrak I_k^q,$
then
$[\phi_{2k}(\mathcal P)]_n
        \in
        \mathcal N_{2k,\ge q}^{(m)}.$
More explicitly,
$[\phi_{2k}(\mathcal P)]_n$
is a finite linear combination of monomials
\begin{equation}
\label{eq:diagonal-ideal-to-normal-form-term}
        \prod_{\nu=1}^{k}
        \left(\Delta^{a_\nu}\alpha\right)_{n+\ell_\nu}
        \prod_{\mu=1}^{k}
        \left(\Delta^{b_\mu}\overline{\alpha}\right)_{n+r_\mu},
\end{equation}
where
\begin{equation}
\label{eq:diagonal-ideal-to-normal-form-count}
        \sum_{\nu=1}^{k}a_\nu
        +
        \sum_{\mu=1}^{k}b_\mu
        \ge q.
\end{equation}
All coefficients and shifts are bounded in terms of $m$.
\end{proposition}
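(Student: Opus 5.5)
The plan is to reduce the statement to single monomials by linearity, starting from the ideal expansion \eqref{eq:normal-form-ideal-expansion}. Since $\mathcal P\in\mathfrak I_k^q$, we can write $\mathcal P$ as a finite sum of products
\[
(x_1-1)^{a_1}\cdots(x_k-1)^{a_k}(y_1-1)^{b_1}\cdots(y_k-1)^{b_k}\,\mathcal P_{\mathbf a,\mathbf b},
\qquad |\mathbf a|+|\mathbf b|=q .
\]
As in Remark~\ref{rem:normal-form-laurent-polynomial-ideal}, we first multiply by a monomial to reduce to ordinary polynomials. The cofactors $\mathcal P_{\mathbf a,\mathbf b}$ then have degree bounded in terms of the degree of $\mathcal P$, which is bounded in terms of $m$. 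This can be arranged because membership in $\mathfrak I_k^q$ for a polynomial of degree $d$ is witnessed by a Taylor expansion at the point $(1,\dots,1)$. Concretely, expand $\mathcal P$ in the variables $u_\nu=x_\nu-1$ and $v_\mu=y_\mu-1$; this is a finite expansion with coefficients bounded in terms of $\mathcal P$. Every monomial in $u,v$ has total degree at least $q$, since the derivatives of order below $q$ vanish. Grouping each such monomial as a product of $q$ generators times a leftover monomial gives the cofactors. Dividing back by the monomial introduces only bounded shifts.

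Because $\phi_{2k}$ is linear, it suffices to treat a single term $(x-1)^{\mathbf a}(y-1)^{\mathbf b}\,x^{\mathbf i}y^{\mathbf j}$, where the $x^{\mathbf i}y^{\mathbf j}$ are the monomials of the cofactor. The coefficient map acts factorwise: the variable $x_\nu$ acts only on the $\nu$-th $\alpha$ factor, and $y_\mu$ only on the $\mu$-th $\overline\alpha$ factor. The image is therefore the product
\[
\prod_\nu \bigl((x_\nu-1)^{a_\nu}x_\nu^{i_\nu}\alpha\bigr)_n\ \prod_\mu\bigl((y_\mu-1)^{b_\mu}y_\mu^{j_\mu}\overline\alpha\bigr)_n .
\]
By \eqref{eq:shift-generator-to-difference}, each factor equals $(\Delta^{a_\nu}\alpha)_{n+i_\nu}$, respectively $(\Delta^{b_\mu}\overline\alpha)_{n+j_\mu}$. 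These are exactly normal-form monomials \eqref{eq:diagonal-ideal-to-normal-form-term} with difference count exactly $q$.

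If negative exponents remain after dividing back, they only shift indices. If instead the expansion was written with backward generators such as $x_\nu^{-1}-1$, we convert them to forward differences using $P^{-1}-1=-P^{-1}(P-1)$, which changes only signs and shifts. Expansions with more than $q$ generators land in $\mathcal N_{2k,s}^{(m)}$ with $s\ge q$. Either way the result lies in $\mathcal N_{2k,\ge q}^{(m)}$.

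The main point needing care is the uniformity claim: the coefficients and shifts must be bounded in terms of $m$ alone. Ideal membership by itself does not bound the cofactors. The Taylor-expansion construction does, however, because everything is determined by the finitely many coefficients of $\mathcal P$ and the finite shift range. In the intended application, $\mathcal P=\mathcal Y^{(m)}_{k,\mathrm{crit}}$ ranges over a finite family depending only on $m$, so all the bounds depend only on $m$.
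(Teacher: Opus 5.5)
Your proposal is correct and follows the paper's proof: expand $\mathcal P$ via \eqref{eq:normal-form-ideal-expansion}, apply $\phi_{2k}$ monomial by monomial, and use \eqref{eq:shift-generator-to-difference} to turn each factor $(x_\nu-1)^{a_\nu}$, $(y_\mu-1)^{b_\mu}$ into differences on the corresponding factor, with non-minimal expansions giving counts $s\ge q$. Your Taylor-expansion construction of the cofactors (after clearing negative exponents) usefully makes explicit why the cofactor shifts and coefficients are bounded, which the paper only asserts from the bounded shift range of $\mathcal P$.
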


\begin{proof}
By \eqref{eq:normal-form-ideal-expansion}, we may write
$$\mathcal P
        =
        \sum_{\substack{\mathbf a,\mathbf b\\
        |\mathbf a|+|\mathbf b|=q}}
        (x_1-1)^{a_1}\cdots(x_k-1)^{a_k}
        (y_1-1)^{b_1}\cdots(y_k-1)^{b_k}
        \mathcal P_{\mathbf a,\mathbf b}.$$
Each Laurent polynomial $\mathcal P_{\mathbf a,\mathbf b}$ is a finite sum
of monomials
$x_1^{i_1}\cdots x_k^{i_k}
        y_1^{j_1}\cdots y_k^{j_k}.$
For one such monomial, applying $\phi_{2k}$ gives
$\prod_{\nu=1}^{k}\alpha_{n+i_\nu}
        \prod_{\mu=1}^{k}\overline{\alpha}_{n+j_\mu}.$
The additional factors
$(x_\nu-1)^{a_\nu},
        \,\,
        (y_\mu-1)^{b_\mu},$
act on the corresponding factors as finite differences, by
\eqref{eq:shift-generator-to-difference}. Hence the resulting local monomial
is
$$\prod_{\nu=1}^{k}
        \left(\Delta^{a_\nu}\alpha\right)_{n+i_\nu}
        \prod_{\mu=1}^{k}
        \left(\Delta^{b_\mu}\overline{\alpha}\right)_{n+j_\mu}.$$
This has difference count exactly
$|\mathbf a|+|\mathbf b|=q.$
With the representation \eqref{eq:normal-form-ideal-expansion}, the displayed
terms have difference count exactly $q$. If the coefficients
$\mathcal P_{\mathbf a,\mathbf b}$ are further expanded using diagonal
factors, or if a nonminimal representation of $\mathcal P$ is chosen, then
terms with more than $q$ differences may appear. In all cases the result
belongs to $\mathcal N_{2k,\ge q}^{(m)}$.
 Since the number of terms and all shifts
are bounded by the original shift range, all constants depend only on $m$.
\end{proof}

\begin{remark}
\label{rem:normal-form-no-analysis}
Proposition~\ref{prop:diagonal-ideal-to-normal-form} is purely algebraic. It
uses neither positivity nor any summability assumption on the Verblunsky
coefficients. The boundedness condition
$|\alpha_n|<1$
will enter only later, when the normal-form monomials are estimated.
\end{remark}

\subsection{Normal form modulo telescoping terms}
\label{subsec:normal-form-modulo-telescoping}

In finite-volume estimates it is often convenient to move discrete
differences among factors by summation by parts. This changes the local
density by a telescoping expression. The remainder calculus of
Section~\ref{sec:remainder-telescoping} allows us to do this without changing
the finite-volume sum except by $O_m(1)$.

We first record a simple redistribution rule.

\begin{lemma}[Redistribution of one difference]
\label{lem:redistribution-one-difference}
Let $F$ and $G$ be bounded local expressions. Then, at the level of
finite sums,
\begin{equation}
\label{eq:redistribution-one-difference}
        \sum_{n=0}^{N}(\Delta F)_nG_n
        =
        -\sum_{n=0}^{N}(PF)_n(\Delta G)_n
        +
        O_m(1).
\end{equation}
Equivalently,
$(\Delta F)G
        \equiv
        -(PF)(\Delta G)
        \,\,
        \mathrm{mod}\ \mathfrak T_m.$
\end{lemma}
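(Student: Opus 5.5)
This is a direct consequence of the discrete summation by parts formula, Lemma~\ref{lem:discrete-summation-by-parts}. The plan is to apply that identity verbatim to the pair $(F,G)$. Then we check that the two endpoint terms are uniformly bounded, and finally translate the finite-sum statement into the congruence modulo $\mathfrak T_m$ using Definition~\ref{def:finite-section-equivalence}.

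\emph{Step one.} Lemma~\ref{lem:discrete-summation-by-parts} gives the exact identity
\begin{equation*}
\sum_{n=0}^{N}(\Delta F)_nG_n=-\sum_{n=0}^{N}F_{n+1}(\Delta G)_n+F_{N+1}G_{N+1}-F_0G_0 .
\end{equation*}
Since $(PF)_n=F_{n+1}$, the first term on the right is already $-\sum_{n=0}^{N}(PF)_n(\Delta G)_n$.

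\emph{Step two.} Because $F$ and $G$ are bounded local expressions with shift radius and coefficients controlled by $m$, and $|\alpha_n|<1$, we have $\sup_n|F_n|+\sup_n|G_n|\le C_m$. Any finitely many negative indices are fixed by the boundary extension convention of Section~\ref{sec:remainder-telescoping}. Hence $|F_{N+1}G_{N+1}-F_0G_0|\le 2C_m^2$, uniformly in $N$, which proves \eqref{eq:redistribution-one-difference}.

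\emph{Step three.} For the equivalent pointwise form, the expansion in the proof of Lemma~\ref{lem:discrete-summation-by-parts} amounts to the discrete Leibniz rule \eqref{eq:discrete-leibniz-first}. Applied to the product $FG$ (with the roles arranged so that $F$ is the factor that receives the shift), it gives
\begin{equation*}
(\Delta F)_nG_n+F_{n+1}(\Delta G)_n=F_{n+1}G_{n+1}-F_nG_n=\bigl((P-1)(FG)\bigr)_n .
\end{equation*}
The product $B=FG$ is again $O_{\mathrm{loc},m}(1)$, so $(\Delta F)G+(PF)(\Delta G)\in\mathfrak T_m$ by definition of $\mathfrak T_m$. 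This is precisely $(\Delta F)G\equiv-(PF)(\Delta G)\pmod{\mathfrak T_m}$.

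The only point needing any care is the bookkeeping in step three: checking that the pointwise identity matches the shift convention of \eqref{eq:discrete-leibniz-first-shift}, where the shift falls on the undifferentiated factor. I would verify this by expanding both sides directly as above rather than quoting the Leibniz formula. No analytic input beyond $|\alpha_n|<1$ is required.
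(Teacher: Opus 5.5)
Your proposal is correct and follows the paper's route. The paper's proof simply cites the summation-by-parts formula \eqref{eq:summation-by-parts-mod-T}, which is exactly your Steps one and two. Your Step three adds the pointwise identity $(\Delta F)_nG_n+F_{n+1}(\Delta G)_n=((P-1)(FG))_n$ with $FG=O_{\mathrm{loc},m}(1)$. This is a useful addition: it justifies the congruence modulo $\mathfrak T_m$ directly, a step the paper's one-line proof leaves implicit.
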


\begin{proof}
This is exactly the discrete summation-by-parts formula
\eqref{eq:summation-by-parts-mod-T}.
\end{proof}

Iterating Lemma~\ref{lem:redistribution-one-difference} gives the following
finite-dimensional normal-form flexibility.
\begin{lemma}[Redistribution of finitely many differences]
\label{lem:redistribution-finitely-many-differences}
Let $M_n$ be a normal-form monomial of degree $2k$ and difference count
$q$. By applying discrete summation by parts finitely many times, one may
replace $M_n$, modulo $\mathfrak T_m$, by a finite linear combination of
normal-form monomials of the same degree and the same total difference
count $q$, but with the differences distributed among the factors in a
different way. All shifts and coefficients remain bounded in terms of $m$.
\end{lemma}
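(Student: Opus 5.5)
The plan is an induction on the number of summation-by-parts moves needed to reach a target distribution of differences. It suffices to treat a single move, because every further step is a finite iteration of that move.

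Fix a normal-form monomial
\[
M_n=\prod_{\nu=1}^{2k}\bigl(\Delta^{r_\nu}\gamma_\nu\bigr)_{n+\ell_\nu},\qquad \sum_\nu r_\nu=q .
\]
Pick one factor with $r_{\nu_0}\ge1$ and a target factor $\nu_1\neq\nu_0$. Write $M_n=(\Delta F)_n G_n$ with $F_n=(\Delta^{r_{\nu_0}-1}\gamma_{\nu_0})_{n+\ell_{\nu_0}}$ and $G_n$ the product of the remaining $2k-1$ factors. Both $F$ and $G$ are bounded local expressions, since $|\alpha_n|<1$ and the finite differences have binomial coefficients depending only on $m$. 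Lemma~\ref{lem:redistribution-one-difference} then gives
\[
M\equiv -(PF)(\Delta G)\pmod{\mathfrak T_m}.
\]
One point needs care: the shift $\ell_{\nu_0}$ commutes with $\Delta$, so $(\Delta F)_n=(\Delta^{r_{\nu_0}}\gamma_{\nu_0})_{n+\ell_{\nu_0}}$ exactly, and no extra bounded shift is introduced here.

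Next, expand $\Delta G$ using the first-order Leibniz rule \eqref{eq:discrete-leibniz-first-shift}, iterated over the $2k-1$ factors, which is the case $q=1$ of Lemma~\ref{lem:discrete-leibniz-rule}. This writes $\Delta G$ as a sum of $2k-1$ terms. In each term exactly one factor $\nu\neq\nu_0$ gains one difference, and the other factors acquire shifts of at most one. The product $(PF)\cdot(\text{such a term})$ is therefore a normal-form monomial of degree $2k$ in which $r_{\nu_0}$ has dropped by one and some $r_\nu$ has risen by one, so the total count stays $q$. Its coefficients are $\pm1$ and its shifts are increased by at most $1$. Thus one move replaces $M$, modulo $\mathfrak T_m$, by a combination of at most $2k-1$ monomials of the same degree and the same count.

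Iterate this move. Each step changes the difference vector by moving one unit from one factor to another, and any target vector with sum $q$ can be reached from any starting one in at most $q$ unit moves. A single move does not, however, send the difference only to the target factor $\nu_1$: it produces terms with the difference on every other factor. The clean way to state the conclusion is therefore as the lemma itself phrases it. After at most $q$ rounds, $M$ is congruent modulo $\mathfrak T_m$ to a finite combination, with at most $(2k-1)^q$ terms, of normal-form monomials of degree $2k$ and count $q$ with redistributed differences.

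If one instead wants to steer everything toward a prescribed vector, argue by induction on the distance $\sum_\nu|r_\nu-r'_\nu|$. The stray terms are handled the same way, and the induction terminates because all vectors lie in a finite set. This bookkeeping is the only mildly delicate point. The essential issue is uniformity: every step uses only finitely many operations, with shifts growing by at most $q\le 2m$ and coefficients bounded by $(2k-1)^q$. Each telescoping remainder has the form $(P-1)B$ with $B=-F\,G$ (up to shift) bounded and local. All constants therefore depend only on $m$, and the remainders lie in $\mathfrak T_m$, which is closed under finite sums by Lemma~\ref{lem:remainder-class-properties}.
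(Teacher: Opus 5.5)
Your core argument is the paper's proof: write $M=(\Delta F)G$, apply Lemma~\ref{lem:redistribution-one-difference}, expand $\Delta G$ by the discrete Leibniz rule so that the moved difference lands on one of the other factors with bounded shifts and coefficients $\pm1$, and iterate finitely many times, which is all the lemma asserts. You should, however, drop your closing aside about steering toward a prescribed difference vector by induction on $\sum_\nu|r_\nu-r'_\nu|$. The stray terms need not decrease that distance and can return to the starting vector, so finiteness of the set of vectors does not give termination, and such steering is impossible in general: for three independent bounded sequences with $h\equiv1$ and $f_n=g_n=(-1)^n$, one has $\sum_{n=0}^{N}f_n(\Delta g)_nh_n=-2(N+1)$, whereas every term carrying the difference on $h$ vanishes, so $f(\Delta g)h$ is not congruent modulo bounded telescoping terms to such a combination; the lemma itself never needs this.
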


\begin{proof}
It suffices to move one difference at a time. Suppose a factor is of the form
$\Delta^{r+1}\gamma
        =
        \Delta(\Delta^r\gamma),
        \,\,
        \gamma\in\{\alpha,\overline{\alpha}\}.$
Treat the remaining product as $G$ and apply
Lemma~\ref{lem:redistribution-one-difference}. The difference is transferred
from $\Delta^r\gamma$ to one of the factors inside $G$, after expanding
$\Delta G$ by the discrete Leibniz rule.  Each application transfers one difference from the chosen factor to the
remaining product. After expanding $\Delta G$ by the discrete Leibniz rule,
one obtains a finite linear combination of monomials in which the total
difference count is unchanged. Each application produces only bounded shifts
and a telescoping term. Since the number of factors and the number of
differences are bounded in terms of $m$, the iteration is finite.

\end{proof}

\begin{remark}
\label{rem:normal-form-redistribution-purpose}
The redistribution lemma will be used only in a soft way. It permits us to
choose convenient representatives of normal-form monomials modulo
$\mathfrak T_m$. For the absorption estimates, the precise distribution of
the differences is not important; only the total count and the degree matter.
\end{remark}

\subsection{BSZ normal form for the critical expressions}
\label{subsec:BSZ-normal-form-critical-expressions}

We now apply Proposition~\ref{prop:diagonal-ideal-to-normal-form} to Yan's
critical expressions.

By Theorem~\ref{thm:diagonal-vanishing-critical},
$\mathcal Y_{k,\mathrm{crit}}^{(m)}
        \in
        \mathfrak I_k^{\,m+1-k},
        \,\,
        1\le k\le m.$
Therefore the degree $2k$ critical coefficient density belongs to the
normal-form class with difference count at least $m+1-k$.

\begin{theorem}[BSZ normal form for critical terms]
\label{thm:BSZ-normal-form-critical}
Let $m\ge1$ and $1\le k\le m$. Then
\begin{equation}
\label{eq:BSZ-normal-form-critical-main}
        \bigl[
        \phi_{2k}(\mathcal Y_{k,\mathrm{crit}}^{(m)})
        \bigr]_n
        \in
        \mathcal N_{2k,\ge m+1-k}^{(m)}.
\end{equation}
Equivalently, it is a finite linear combination of local monomials of the
form
\begin{equation}
\label{eq:BSZ-normal-form-critical-term}
        \prod_{\nu=1}^{k}
        \left(\Delta^{a_\nu}\alpha\right)_{n+\ell_\nu}
        \prod_{\mu=1}^{k}
        \left(\Delta^{b_\mu}\overline{\alpha}\right)_{n+r_\mu},
\end{equation}
where
\begin{equation}
\label{eq:BSZ-normal-form-critical-count}
        \sum_{\nu=1}^{k}a_\nu
        +
        \sum_{\mu=1}^{k}b_\mu
        \ge
        m+1-k.
\end{equation}
All shifts and coefficients are bounded in terms of $m$.

Moreover, modulo the telescoping class $\mathfrak T_m$, one may apply
finite redistributions of differences by summation by parts. This changes
the density only by replacing it with a finite linear combination of
normal-form monomials of the same degree and the same total difference
count, with bounded shifts and coefficients.
\end{theorem}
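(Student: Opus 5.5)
The plan is to combine the two preceding ingredients directly: the algebraic input from Theorem~\ref{thm:diagonal-vanishing-critical} and the conversion mechanism of Proposition~\ref{prop:diagonal-ideal-to-normal-form}. The redistribution statement then follows from Lemma~\ref{lem:redistribution-finitely-many-differences}.

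\emph{Step 1.} Fix $1\le k\le m$. By Theorem~\ref{thm:diagonal-vanishing-critical},
$$\mathcal Y_{k,\mathrm{crit}}^{(m)}\in\mathfrak I_k^{\,m+1-k}.$$
Before applying Proposition~\ref{prop:diagonal-ideal-to-normal-form} with $q=m+1-k$, I will check that the Laurent coefficients $\mathcal P_{\mathbf a,\mathbf b}$ in the expansion \eqref{eq:normal-form-ideal-expansion} can be chosen with exponents and coefficients bounded in terms of $m$ alone. I would proceed as in Remark~\ref{rem:normal-form-laurent-polynomial-ideal}. First, multiply $\mathcal Y_{k,\mathrm{crit}}^{(m)}$ by a monomial $x^{\mathbf s}y^{\mathbf t}$ so that it becomes an honest polynomial of degree at most $d_m$, where $d_m$ depends only on the shift range of Yan's expression. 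Next, Taylor-expand this polynomial around the point $(1,\dots,1)$ in the variables $u_\nu=x_\nu-1$ and $v_\mu=y_\mu-1$. Vanishing to order $m+1-k$ means that only monomials $u^{\mathbf a}v^{\mathbf b}$ with $|\mathbf a|+|\mathbf b|\ge m+1-k$ appear. Group each such monomial by extracting a factor of total degree exactly $m+1-k$; the leftover factor is a polynomial of degree at most $d_m$. Finally, divide back by the monomial. This construction keeps all shifts and coefficients explicitly controlled by $d_m$ and by the (finitely many, $m$-dependent) coefficients of Yan's expression.

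\emph{Step 2.} Apply $\phi_{2k}$ term by term, using \eqref{eq:shift-generator-to-difference}. Each $(x_\nu-1)^{a_\nu}$ becomes $\Delta^{a_\nu}$ acting on the $\nu$-th $\alpha$-factor, and each $(y_\mu-1)^{b_\mu}$ becomes $\Delta^{b_\mu}$ acting on the $\mu$-th $\overline\alpha$-factor. The residual monomials $x^{\mathbf i}y^{\mathbf j}$ only produce the shifts $\ell_\nu=i_\nu$ and $r_\mu=j_\mu$. One point needs checking: $\Delta$ commutes with shifts and acts on separate factors independently. This is what lets multivariable factorization pass to factorwise differences, and it holds because $\phi_{2k}$ is multiplicative in the separated variables. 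The result is the displayed form \eqref{eq:BSZ-normal-form-critical-term}, with difference count \eqref{eq:BSZ-normal-form-critical-count} and with all data bounded in terms of $m$. Membership in $\mathcal N^{(m)}_{2k,\ge m+1-k}$ follows.

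\emph{Step 3.} For the ``moreover'' clause, apply Lemma~\ref{lem:redistribution-finitely-many-differences} to each of the finitely many monomials from Step 2. Every summation by parts changes the density by an element of $\mathfrak T_m$. This is justified by Lemma~\ref{lem:redistribution-one-difference}, since all factors are bounded because $|\alpha_n|<1$. The discrete Leibniz rule (Lemma~\ref{lem:discrete-leibniz-rule}) preserves both the degree and the total difference count. Finitely many such operations, with the number bounded in terms of $m$, keep all shifts and coefficients bounded.

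The main obstacle is the uniformity bookkeeping in Step 1. Ideal membership in a Laurent ring does not by itself bound the cofactors, so the Taylor-grouping argument is needed to make the constants depend only on $m$. Everything else is a direct application of results already stated in the paper.
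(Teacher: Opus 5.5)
Your proposal is correct and follows the paper's proof. The inclusion is Theorem~\ref{thm:diagonal-vanishing-critical} combined with Proposition~\ref{prop:diagonal-ideal-to-normal-form}, and the redistribution clause is Lemma~\ref{lem:redistribution-finitely-many-differences}; your Taylor-grouping in Step~1 correctly spells out the cofactor bookkeeping, but it is not a real obstacle, since for fixed $m$ the critical part is a single fixed Laurent polynomial and any choice of cofactors is automatically bounded in terms of $m$.
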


\begin{proof}
The inclusion \eqref{eq:BSZ-normal-form-critical-main} follows immediately
from Theorem~\ref{thm:diagonal-vanishing-critical} and
Proposition~\ref{prop:diagonal-ideal-to-normal-form}. The final statement is
Lemma~\ref{lem:redistribution-finitely-many-differences}.
\end{proof}

For the higher critical contribution, only the range $2\le k\le m$ is
relevant. Hence Theorem~\ref{thm:BSZ-normal-form-critical} gives
\begin{equation}
\label{eq:higher-critical-normal-form-section7}
        \bigl[
        \phi_{2k}(\mathcal Y_{k,\mathrm{crit}}^{(m)})
        \bigr]_n
        \in
        \mathcal N_{2k,\ge m+1-k}^{(m)},
        \,\,
        2\le k\le m.
\end{equation}

\subsection{At least versus exact difference count}
\label{subsec:at-least-vs-exact-difference-count}

The critical count for the degree $2k$ terms is
$q=m+1-k.$
The normal-form theorem gives monomials with at least this many differences.
This is the natural class for the analytic estimates in the next section.

Let $M_n$ be a normal-form monomial of degree $2k$ with total difference
count $s\ge q$. Since $s$ and all individual difference orders are
bounded in terms of $m$, every factor
$\Delta^r\alpha,
        \,\,
        \Delta^r\overline\alpha$
which appears in the normal form is uniformly bounded:
\begin{equation}
\label{eq:higher-difference-bounded}
        \|\Delta^r\alpha\|_{\ell^\infty}
        +
        \|\Delta^r\overline\alpha\|_{\ell^\infty}
        \le
        C_m,
\end{equation}
using $|\alpha_n|<1$. Thus additional differences do not create any loss
of pointwise boundedness. The interpolation estimates of the next section
will be stated for all monomials satisfying
$\sum a_\nu+\sum b_\mu
        \ge
        m+1-k,$
not only for equality. For bookkeeping, however, we keep
$q=m+1-k$ as the critical reference count.

We encode this convention as follows.

\begin{definition}[Critical BSZ class]
\label{def:critical-BSZ-class}
For $2\le k\le m$, define
$\mathcal C_{m,k}
        :=
        \mathcal N_{2k,\ge m+1-k}^{(m)}.$
Thus a monomial in $\mathcal C_{m,k}$ has degree $2k$ and at least
$m+1-k$ discrete differences distributed among its factors.
\end{definition}

With this notation,
\begin{equation}
\label{eq:critical-expression-in-Cmk}
        \bigl[
        \phi_{2k}(\mathcal Y_{k,\mathrm{crit}}^{(m)})
        \bigr]_n
        \in
        \mathcal C_{m,k},
        \,\,
        2\le k\le m.
\end{equation}

\subsection{Finite-volume normal form for $R_m^{(N)}$}
\label{subsec:finite-volume-normal-form-Rm}

Recall that
\begin{equation}
\label{eq:Rm-definition-section7}
        R_m^{(N)}(\alpha)
        =
        \sum_{k=2}^{m}
        \sum_{n=0}^{N}
        \bigl[
        \phi_{2k}(\mathcal Y_{k,\mathrm{crit}}^{(m)})
        \bigr]_n.
\end{equation}

By the preceding theorem, each summand is a finite linear combination of
critical BSZ monomials. After choosing representatives modulo the telescoping class
$\mathfrak T_m$, we obtain
\begin{equation}
\label{eq:Rm-normal-form-finite-combination}
        R_m^{(N)}(\alpha)
        =
        \sum_{k=2}^{m}
        \sum_{\rho\in\mathcal A_{m,k}}
        c_{\rho}
        \sum_{n=0}^{N}M_{\rho,n}
        +
        O_m(1),
\end{equation}
where:

\begin{enumerate}
\item $\mathcal A_{m,k}$ is a finite index set depending only on $m$;

\item $c_\rho\in\mathbb C$ are constants depending only on $m$;

\item each $M_{\rho,n}$ belongs to the critical BSZ class
$\mathcal C_{m,k}$;

\item the $O_m(1)$ term accounts for the harmless telescoping and endpoint
contributions allowed by the finite-section calculus.
\end{enumerate}

Equivalently, every $M_{\rho,n}$ has the form
\begin{equation}
\label{eq:Rm-normal-form-monomial}
        M_{\rho,n}
        =
        \prod_{\nu=1}^{k}
        \left(\Delta^{a_{\rho,\nu}}\alpha\right)_{n+\ell_{\rho,\nu}}
        \prod_{\mu=1}^{k}
        \left(\Delta^{b_{\rho,\mu}}\overline{\alpha}\right)_{n+r_{\rho,\mu}},
\end{equation}
with
\begin{equation}
\label{eq:Rm-normal-form-count}
        \sum_{\nu=1}^{k}a_{\rho,\nu}
        +
        \sum_{\mu=1}^{k}b_{\rho,\mu}
        \ge
        m+1-k.
\end{equation}

\begin{proposition}[Finite-volume BSZ normal form for $R_m^{(N)}$]
\label{prop:finite-volume-BSZ-normal-form-Rm}
For every $m\ge2$, the higher critical contribution admits the
finite-volume representation \eqref{eq:Rm-normal-form-finite-combination}.
In particular, to prove that
$R_m^{(N)}(\alpha)$
is an absorbable remainder, it suffices to prove the absorbability estimate
for each critical BSZ monomial $M_{\rho,n}$ appearing in
\eqref{eq:Rm-normal-form-monomial}.
\end{proposition}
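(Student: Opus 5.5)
The plan is to read off \eqref{eq:Rm-normal-form-finite-combination} directly from Theorem~\ref{thm:BSZ-normal-form-critical}, and then to reduce absorbability of $R_m^{(N)}$ to absorbability of the individual monomials using the linearity properties of $\mathfrak R_m$ in Lemma~\ref{lem:remainder-class-properties}.

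\textbf{Step 1: normal-form expansion of each $k$.} Fix $2\le k\le m$. By Theorem~\ref{thm:diagonal-vanishing-critical}, $\mathcal Y_{k,\mathrm{crit}}^{(m)}\in\mathfrak I_k^{\,m+1-k}$. By Proposition~\ref{prop:diagonal-ideal-to-normal-form}, the density $[\phi_{2k}(\mathcal Y_{k,\mathrm{crit}}^{(m)})]_n$ is then a finite linear combination of monomials of the form \eqref{eq:Rm-normal-form-monomial}, each satisfying the count \eqref{eq:Rm-normal-form-count}.

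The index set $\mathcal A_{m,k}$ and the coefficients $c_\rho$ depend only on $m$. The reason is that $\mathcal Y_k^{(m)}$ is a fixed Laurent polynomial whose shift range is bounded in terms of $m$. Hence the expansion \eqref{eq:normal-form-ideal-expansion} can be fixed once and for all, independently of $\alpha$ and of $N$. I will state this explicitly: the expansion is chosen at the purely algebraic level, before any summation.

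\textbf{Step 2: summation over $n$ and $k$.} I sum the pointwise identity over $0\le n\le N$ and over $2\le k\le m$. This already gives \eqref{eq:Rm-normal-form-finite-combination}, in fact with zero error term.

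If one instead wants representatives with differences redistributed, as allowed in the last part of Theorem~\ref{thm:BSZ-normal-form-critical}, each such change adds an element of $\mathfrak T_m$. By \eqref{eq:T-class-bound}, each element of $\mathfrak T_m$ contributes $O_m(1)$ uniformly in $N$. Only finitely many such changes occur, their number depending on $m$, so the total error is $O_m(1)$.

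Negative-index variables are handled by the fixed bounded extension convention of Section~\ref{sec:remainder-telescoping}. This also costs only $O_m(1)$.

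\textbf{Step 3: the reduction statement.} Suppose each $M_\rho\in\mathfrak R_m$. Lemma~\ref{lem:remainder-class-properties}(1), applied finitely many times, shows that $\sum_\rho c_\rho M_\rho\in\mathfrak R_m$. The $O_m(1)$ error is then absorbed into the constant $C_{\varepsilon,m}$, which is Lemma~\ref{lem:remainder-class-properties}(3)--(4). The uniform choice of $L_m$ and $C_{\varepsilon,m}$ is possible because the family of monomials is finite.

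\textbf{Main obstacle.} The only point needing care is bookkeeping: the expansion constants and the shifts must be independent of $N$. This holds because all choices are made in the finite-dimensional shift algebra attached to fixed $m$.

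Note that the genuinely hard analytic work, namely showing $M_\rho\in\mathfrak R_m$, is not part of this proposition; it is deferred to the interpolation estimates of the next section.
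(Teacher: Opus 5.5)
Your proposal is correct and follows essentially the same route as the paper. In both, the representation comes from the critical normal-form inclusion (diagonal vanishing plus Proposition~\ref{prop:diagonal-ideal-to-normal-form}), with finitely many shifts fixed at the algebraic level, and telescoping changes of representative cost only $O_m(1)$. The reduction to individual monomials then uses the stability of $\mathfrak R_m$ under finite linear combinations and under addition of telescoping or bounded terms. Your remark that the pointwise expansion already yields the identity with no error term, apart from the negative-index extension convention, is a small sharpening of the paper's terser argument.
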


\begin{proof}
The finite combination follows from
\eqref{eq:critical-expression-in-Cmk} and the fact that, for fixed $m$, only
finitely many shifts and degrees occur in Yan's algebraic expression.
Telescoping changes of representative contribute $O_m(1)$ to finite sums
by Section~\ref{sec:remainder-telescoping}. Since the class
$\mathfrak R_m$ is stable under finite linear combinations and under
addition of telescoping terms, it is enough to estimate each normal-form
monomial separately.
\end{proof}

\subsection{Summary of the normal-form reduction}
\label{subsec:normal-form-reduction-summary}

The outcome of this section is the following reduction.

For every $2\le k\le m$, the degree $2k$ higher critical density is a
finite linear combination, modulo bounded telescoping terms, of monomials
$$\prod_{\nu=1}^{k}
        \left(\Delta^{a_\nu}\alpha\right)_{n+\ell_\nu}
        \prod_{\mu=1}^{k}
        \left(\Delta^{b_\mu}\overline{\alpha}\right)_{n+r_\mu},$$
with
$\sum_{\nu=1}^{k}a_\nu
        +
        \sum_{\mu=1}^{k}b_\mu
        \ge
        m+1-k.$
Thus the proof of the higher critical estimate is reduced to the following
analytic statement: for every $2\le k\le m$, every balanced local monomial
of degree $2k$ with bounded shifts and with total difference count at
least $m+1-k$, and every $\varepsilon>0$,

\begin{equation}
\label{eq:normal-form-analytic-task}
        \left|
        \sum_{n=0}^{N}
        \prod_{\nu=1}^{k}
        \left(\Delta^{a_\nu}\alpha\right)_{n+\ell_\nu}
        \prod_{\mu=1}^{k}
        \left(\Delta^{b_\mu}\overline{\alpha}\right)_{n+r_\mu}
        \right|
\end{equation}
is bounded by
\begin{equation}
\label{eq:normal-form-analytic-task-bound}
        \varepsilon
        \sum_{n=0}^{N+L_m}|\Delta^m\alpha_n|^2
        +
        \varepsilon
        \sum_{n=0}^{N+L_m}|\alpha_n|^{2m+2}
        +
        C_{\varepsilon,m}.
\end{equation}
This will be proved in the next section using discrete interpolation and
Young-type inequalities.

\section{Absorption, the positive semidefinite quartic block, and the coercive finite-volume estimate}
\label{sec:absorption-coercive-estimate}
In this section we complete the finite-volume lower bound needed for the
necessity direction.  Here and below, PSD means positive semidefinite.

We first recall the notation for the critical algebraic densities.  In the
finite-volume Yan--BSZ expansion, the coefficient side is decomposed into
homogeneous algebraic densities
$G_2,G_4,\ldots,G_{2m}.$
The term $G_{2k}$ denotes the component of total degree $2k$ in the
critical variables.  Its critical part, denoted
$G_{2k}^{\rm crit},$
is the part which vanishes on the diagonal only to the critical order and
therefore requires special treatment in the finite-volume estimate.  In
particular,
$G_4^{\rm crit}$
is the quartic critical density corresponding to $k=2$.

There is one point which requires some care.  The quartic critical term
should not be treated by taking the raw $(m-1,m-1)$-bihomogeneous Taylor
component of $G_4^{\rm crit}$.  That raw representative does not in
general admit a positive semidefinite Gram representative, already for
$m=2$.  Instead, one must first use the Yan--BSZ quotient-algebra normal
form: we replace $G_4^{\rm crit}$ by an equivalent representative
$\widetilde G_4^{\rm crit}
        \equiv
        G_4^{\rm crit}
        \,\, \text{in } B_4.$
The lowest critical homogeneous part of
$\widetilde G_4^{\rm crit}$ has an explicit positive semidefinite Gram
representation.  The remaining normal-form monomials are absorbable
remainders.

Thus the relevant chain of objects in this section is
$G_4^{\rm crit}
        \,\,\leadsto\,\,
        \widetilde G_4^{\rm crit}
        \,\,\leadsto\,\,
        F_m^{\rm Yan}
        \,\,\leadsto\,\,
        \mathcal P_m=-F_m^{\rm Yan},$
where $F_m^{\rm Yan}$ denotes the lowest critical homogeneous part of
$\widetilde G_4^{\rm crit}$, and $\mathcal P_m$ is the positive
semidefinite quartic principal block.

Throughout this section $m\ge1$ is fixed.  Constants may depend on $m$,
but not on $N$.  We write
$\Delta = S-1.$
Finite shifts of indices are harmless and are absorbed by replacing $N$
with $N+L_m$ in positive sums.

\subsection{The quartic critical block and its PSD representative}
\label{subsec:quartic-critical-PSD-block}

We first isolate the quartic critical block.  In the single critical point
case we may, after rotation, place the critical point at $1$.  Near the
critical point we write
$x_j=1+X_j,\,\, y_j=1+Y_j,
        \,\, j=1,2.$
The quotient relation in $B_4$ is
$x_1x_2y_1y_2=1.$
At the level of the lowest critical homogeneous part this becomes the
linear constraint
\begin{equation}
\label{eq:linear-critical-constraint-section8}
        X_1+X_2+Y_1+Y_2=0.
\end{equation}

The Yan representative of the quartic critical term is obtained by first
rewriting $G_4^{\rm crit}$ in the quotient algebra $B_4$ and then taking
the lowest critical homogeneous part.  For $k=2$, the relevant variables
are
$a_1=y_1y_2x_2,\,\, a_2=y_2,
        \,\,
        b_1=x_1y_1,\,\, b_2=x_1y_1x_2y_2.$
Under the quotient relation $x_1x_2y_1y_2=1$, the four products
$a_pb_q$ have the following linear parts:
$a_1b_1-1\sim Y_1,\,\,
        a_1b_2-1\sim -X_1,$
$a_2b_1-1\sim -X_2,\,\,
        a_2b_2-1\sim Y_2.$
The two first-order difference-quotient denominators are
$Y_1+X_2,
        \,\,
        X_2+Y_2.$

For the weight of order $m$ at the single critical point,
$H_{\rm crit}(1+u)
        =
        2^{-m}u^{2m}+O(u^{2m+1}),
        \,\,
        Z_H=2^{-m}\binom{2m}{m}.$
Consequently the lowest critical homogeneous part of the Yan representative
is
\begin{equation}
\label{eq:Yan-quartic-critical-principal-part}
        F_m^{\rm Yan}
        =
        \frac{1}{2\binom{2m}{m}}
        \frac{
        Y_1^{2m}-X_1^{2m}-X_2^{2m}+Y_2^{2m}
        }{
        (Y_1+X_2)(X_2+Y_2)
        },
\end{equation}
under the linear constraint \eqref{eq:linear-critical-constraint-section8}.
Equivalently, set
$u=Y_1,\,\, v=Y_2,\,\, t=-X_2.$
Then \eqref{eq:linear-critical-constraint-section8} gives
$-X_1=u+v-t,$
and hence
\begin{equation}
\label{eq:Yan-quartic-critical-uvt}
        F_m^{\rm Yan}
        =
        \frac{1}{2\binom{2m}{m}}
        \frac{
        u^{2m}+v^{2m}-(u+v-t)^{2m}-t^{2m}
        }{
        (u-t)(v-t)
        }.
\end{equation}

The positive object is the negative of this principal part:
\begin{equation}
\label{eq:P_m-definition-section8}
        \mathcal P_m:=-F_m^{\rm Yan}.
\end{equation}
Thus
\begin{equation}
\label{eq:P_m-difference-quotient}
        \mathcal P_m
        =
        \frac{1}{2\binom{2m}{m}}
        \frac{
        (u+v-t)^{2m}+t^{2m}-u^{2m}-v^{2m}
        }{
        (u-t)(v-t)
        }.
\end{equation}

\begin{proposition}[PSD Gram representation of the quartic critical block]
\label{prop:quartic-critical-PSD-Gram}
Let
$Z_1=X_1,\,\, Z_2=X_2,\,\, Z_3=Y_1,$
so that, by \eqref{eq:linear-critical-constraint-section8},
$Y_2=-Z_1-Z_2-Z_3.$
Let
$n=m-1,
        \,\,
        \mathbf W_m(Z)
        =
        \bigl(Z^\alpha\bigr)_{|\alpha|=n},$
where
$\alpha=(\alpha_1,\alpha_2,\alpha_3)\in\mathbb N^3,
        \,\,
        |\alpha|=\alpha_1+\alpha_2+\alpha_3=n.$
Then
\begin{equation}
\label{eq:P_m-Gram-representation}
        \mathcal P_m(Z)
        =
        \mathbf W_m(Z)^T
        M_{2,m}^{\rm PSD}
        \mathbf W_m(Z),
\end{equation}
where
$M_{2,m}^{\rm PSD}\succeq0.$
Moreover, one may take
$M_{2,m}^{\rm PSD}
        =
        \bigl(M_{\alpha,\beta}^{(m)}\bigr)_{|\alpha|=|\beta|=m-1},$
with entries
\begin{equation}
\label{eq:PSD-matrix-entries-section8}
\begin{aligned}
M_{\alpha,\beta}^{(m)}
&=
\frac{m(2m-1)}{\binom{2m}{m}}
\binom{m-1}{\alpha_1,\alpha_2,\alpha_3}
\binom{m-1}{\beta_1,\beta_2,\beta_3}
\\
&\,\, \times
\sum_{p=0}^{\alpha_2+\beta_2}
\sum_{q=0}^{\alpha_3+\beta_3}
(-1)^{p+q}
\binom{\alpha_2+\beta_2}{p}
\binom{\alpha_3+\beta_3}{q}
\frac{1}{
(p+q+1)
(\alpha_1+\beta_1+\alpha_3+\beta_3-q+1)
}.
\end{aligned}
\end{equation}
\end{proposition}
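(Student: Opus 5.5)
The plan is to avoid manipulating the quotient \eqref{eq:P_m-difference-quotient} algebraically. Instead I will write it as an integral of a second derivative, which exhibits $\mathcal P_m$ as an average of squares of a single form of degree $m-1$. Put $f(x)=x^{2m}$, $p=u-t$, $q=v-t$. Then $u=t+p$, $v=t+q$ and $u+v-t=t+p+q$, so the numerator of \eqref{eq:P_m-difference-quotient} is the mixed second difference
$$f(t+p+q)-f(t+p)-f(t+q)+f(t)=pq\int_0^1\!\!\int_0^1 f''(t+sp+rq)\,ds\,dr .$$
This is the fundamental theorem of calculus applied once in each of $p$ and $q$. Hence, as a polynomial identity (so the division by $(u-t)(v-t)$ is harmless),
$$\mathcal P_m=\frac{2m(2m-1)}{2\binom{2m}{m}}\int_0^1\!\!\int_0^1\bigl(t+s(u-t)+r(v-t)\bigr)^{2m-2}\,ds\,dr .$$
The prefactor equals $m(2m-1)/\binom{2m}{m}$, which is exactly the constant appearing in \eqref{eq:PSD-matrix-entries-section8}. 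This is a strong indication that this is the intended route.

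Next I pass to the variables $Z$. We have $t=-X_2=-Z_2$ and $u=Y_1=Z_3$, and by \eqref{eq:linear-critical-constraint-section8}, $v=Y_2=-Z_1-Z_2-Z_3$. The integrand is therefore $\ell_{s,r}(Z)^{2(m-1)}$ for a real linear form
$$\ell_{s,r}(Z)=c_1(s,r)Z_1+c_2(s,r)Z_2+c_3(s,r)Z_3,$$
whose coefficients are affine in $(s,r)$. By the multinomial theorem,
$$\ell_{s,r}^{\,m-1}=\sum_{|\alpha|=m-1}\binom{m-1}{\alpha_1,\alpha_2,\alpha_3}c(s,r)^\alpha Z^\alpha=\mathbf W_m(Z)^T\mathbf w(s,r),$$
where $\mathbf w(s,r)$ is the vector with entries $\binom{m-1}{\alpha}c(s,r)^\alpha$. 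Consequently
$$\mathcal P_m=\mathbf W_m^T\Bigl(\tfrac{m(2m-1)}{\binom{2m}{m}}\int_0^1\!\!\int_0^1\mathbf w\,\mathbf w^T\,ds\,dr\Bigr)\mathbf W_m .$$
The matrix in parentheses is an integral of rank-one positive semidefinite matrices with a positive weight, so it is positive semidefinite. This proves \eqref{eq:P_m-Gram-representation} with $M_{2,m}^{\rm PSD}\succeq0$.

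The remaining step is to identify the entries with \eqref{eq:PSD-matrix-entries-section8}. Here
$$M_{\alpha,\beta}=\frac{m(2m-1)}{\binom{2m}{m}}\binom{m-1}{\alpha}\binom{m-1}{\beta}\int\!\!\int c_1^{\alpha_1+\beta_1}c_2^{\alpha_2+\beta_2}c_3^{\alpha_3+\beta_3}\,ds\,dr .$$
I would choose the orientation of the unit square (that is, replace $s\mapsto1-s$ or $r\mapsto1-r$, or swap the roles of $p$ and $q$) so that two of the coefficients become single variables or differences whose binomial expansion in $p,q$ produces the double sum with signs $(-1)^{p+q}$. Each monomial in $s,r$ then integrates to a product of reciprocals, which should give the factors $1/\bigl((p+q+1)(\alpha_1+\beta_1+\alpha_3+\beta_3-q+1)\bigr)$.

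This bookkeeping is the main obstacle. With the naive parametrization one gets $\ell=-rZ_1+(s-1)Z_2+(s-r)Z_3$, which does not literally match the displayed formula. I expect that a suitable affine change of variables on the square, possibly combined with the symmetry $p\leftrightarrow q$, will bring the expression to the stated form. If an exact match fails, the positive semidefiniteness conclusion still holds for the integral-Gram matrix above, and "one may take" only requires some valid positive semidefinite choice.
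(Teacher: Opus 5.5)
Your derivation of the Gram identity and of $M_{2,m}^{\rm PSD}\succeq0$ is the paper's argument. It uses the mixed second difference written as $\int_0^1\!\int_0^1 f''$, the constant $m(2m-1)/\binom{2m}{m}$, the multinomial expansion of $\ell_{s,r}^{\,m-1}$, and positivity of an average of rank-one matrices; that part is complete.

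What is missing is the ``Moreover'' clause. You never verify \eqref{eq:PSD-matrix-entries-section8}, you claim your parametrization does not match it, and your fallback misreads the statement. ``One may take'' the displayed matrix asserts that this specific matrix satisfies \eqref{eq:P_m-Gram-representation}. For $m\ge3$ the Gram matrix of $\mathcal P_m$ in the basis $\mathbf W_m$ is not unique (e.g.\ $Z_1^2\cdot Z_2^2=(Z_1Z_2)\cdot(Z_1Z_2)$). So exhibiting some PSD Gram matrix does not validate an unchecked explicit formula.

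The obstacle you describe does not exist. Your form $\ell_{s,r}=-rZ_1+(s-1)Z_2+(s-r)Z_3$ is exactly the paper's $L_{\lambda,\mu}$ with $(\lambda,\mu)=(s,r)$, and it gives the displayed entries directly. Put $A=\alpha_1+\beta_1$, $B=\alpha_2+\beta_2$, $C=\alpha_3+\beta_3$, and expand $(s-1)^B$ and $(s-r)^C$ binomially:
\[
(-r)^{A}(s-1)^{B}(s-r)^{C}=\sum_{p=0}^{B}\sum_{q=0}^{C}(-1)^{A+B+C-p-q}\binom{B}{p}\binom{C}{q}\,s^{p+q}\,r^{A+C-q}.
\]
Since $A+B+C=2m-2$ is even, the sign is $(-1)^{p+q}$. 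Also $\int_0^1\!\int_0^1 s^{p+q}r^{A+C-q}\,ds\,dr=\frac{1}{(p+q+1)(A+C-q+1)}$. Multiplying by $\binom{m-1}{\alpha}\binom{m-1}{\beta}$ and the prefactor gives exactly \eqref{eq:PSD-matrix-entries-section8}.

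The reparametrizations you contemplate ($s\mapsto1-s$, $r\mapsto1-r$, $s\leftrightarrow r$) are measure-preserving on $[0,1]^2$. They leave the numerical matrix $\int\!\int\mathbf w\,\mathbf w^T$ unchanged and only change how its entries are written. Your integral Gram matrix is therefore already the paper's matrix, and the only thing missing is the two-line expansion above.
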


\begin{proof}
Put
$a=u-t,\,\, b=v-t.$
Then
$u=t+a,\,\, v=t+b,\,\, u+v-t=t+a+b.$
The numerator in \eqref{eq:P_m-difference-quotient} becomes
$(t+a+b)^{2m}+t^{2m}-(t+a)^{2m}-(t+b)^{2m}.$
For $f(x)=x^{2m}$, the elementary second difference-quotient identity gives
$$\frac{
f(t+a+b)+f(t)-f(t+a)-f(t+b)
}{ab}
=
\int_0^1\int_0^1
f''(t+\lambda a+\mu b)
\,d\lambda\,d\mu .$$
Since
$f''(x)=2m(2m-1)x^{2m-2},$
we obtain
\begin{equation}
\label{eq:P_m-integral-form-section8}
        \mathcal P_m
        =
        \frac{m(2m-1)}{\binom{2m}{m}}
        \int_0^1\int_0^1
        \left[
        t+\lambda(u-t)+\mu(v-t)
        \right]^{2m-2}
        d\lambda\,d\mu.
\end{equation}

In the $Z$-variables,
$u=Z_3,\,\,
        v=-Z_1-Z_2-Z_3,\,\,
        t=-Z_2.$
Therefore
$t+\lambda(u-t)+\mu(v-t)
        =
        -\mu Z_1+(\lambda-1)Z_2+(\lambda-\mu)Z_3.$
Set
$L_{\lambda,\mu}(Z)
        =
        -\mu Z_1+(\lambda-1)Z_2+(\lambda-\mu)Z_3.$
Expanding
$$L_{\lambda,\mu}(Z)^{m-1}
        =
        \sum_{|\alpha|=m-1}
        c_\alpha(\lambda,\mu)Z^\alpha,$$
we have
$$c_\alpha(\lambda,\mu)
        =
        \binom{m-1}{\alpha_1,\alpha_2,\alpha_3}
        (-\mu)^{\alpha_1}
        (\lambda-1)^{\alpha_2}
        (\lambda-\mu)^{\alpha_3}.$$
Thus \eqref{eq:P_m-integral-form-section8} becomes
$$\mathcal P_m(Z)
        =
        \frac{m(2m-1)}{\binom{2m}{m}}
        \int_0^1\int_0^1
        \left(
        \sum_{|\alpha|=m-1}
        c_\alpha(\lambda,\mu)Z^\alpha
        \right)^2
        d\lambda\,d\mu.$$
This proves the Gram representation with
$$M_{\alpha,\beta}^{(m)}
        =
        \frac{m(2m-1)}{\binom{2m}{m}}
        \int_0^1\int_0^1
        c_\alpha(\lambda,\mu)c_\beta(\lambda,\mu)
        d\lambda\,d\mu.$$
The matrix is positive semidefinite because, for every real vector
$\xi=(\xi_\alpha)$,
$$\xi^T M_{2,m}^{\rm PSD}\xi
        =
        \frac{m(2m-1)}{\binom{2m}{m}}
        \int_0^1\int_0^1
        \left(
        \sum_{|\alpha|=m-1}
        \xi_\alpha c_\alpha(\lambda,\mu)
        \right)^2
        d\lambda\,d\mu
        \ge0.$$

It remains only to expand the integral.  Write
$A=\alpha_1+\beta_1,\,\,
        B=\alpha_2+\beta_2,\,\,
        C=\alpha_3+\beta_3.$
Then
$$c_\alpha c_\beta
        =
        \binom{m-1}{\alpha}
        \binom{m-1}{\beta}
        (-\mu)^A
        (\lambda-1)^B
        (\lambda-\mu)^C.$$
Expanding
$$(\lambda-1)^B
        =
        \sum_{p=0}^{B}
        \binom{B}{p}
        (-1)^{B-p}\lambda^p$$
and
$$(\lambda-\mu)^C
        =
        \sum_{q=0}^{C}
        \binom{C}{q}
        \lambda^q(-\mu)^{C-q},$$
we get
$$(-\mu)^A(\lambda-1)^B(\lambda-\mu)^C
        =
        \sum_{p=0}^{B}\sum_{q=0}^{C}
        (-1)^{A+B+C-p-q}
        \binom{B}{p}\binom{C}{q}
        \lambda^{p+q}
        \mu^{A+C-q}.$$
Since $A+B+C=2m-2$ is even, the sign is
$(-1)^{p+q}$.  Finally,
$$\int_0^1\int_0^1
        \lambda^{p+q}\mu^{A+C-q}
        d\lambda\,d\mu
        =
        \frac{1}{
        (p+q+1)(A+C-q+1)
        }.$$
This gives \eqref{eq:PSD-matrix-entries-section8}.
\end{proof}

\begin{remark}
\label{rem:raw-representative-failure-section8}
The use of the Yan representative is essential.  If one instead takes the
raw $(m-1,m-1)$-bihomogeneous Taylor component of $G_4^{\rm crit}$, the
resulting coefficient matrix need not even admit a symmetric representative
modulo the linearized quotient relation.  For example, when $m=2$, the raw
coefficient matrix is
$\begin{pmatrix}
        \frac56 & \frac5{12}\\
        \frac12 & \frac1{12}
        \end{pmatrix},$
and the symmetry constraints forced by
$X_1+X_2+Y_1+Y_2=0$ are inconsistent.  Thus the PSD block above is not a
property of the raw representative; it is a property of the quotient-algebra
normal representative.
\end{remark}

In the finite-volume expansion, Proposition
\ref{prop:quartic-critical-PSD-Gram} implies that the quartic principal
critical block contributes a nonnegative square sum, up to harmless endpoint
terms:
\begin{equation}
\label{eq:quartic-block-nonnegative-finite-volume}
        \mathcal P_{m}^{(N)}(\alpha)\ge -C_m.
\end{equation}
Here $\mathcal P_m^{(N)}$ denotes the finite-volume contribution obtained
by applying the coefficient-side map to the local polynomial
$\mathcal P_m$ and summing over $0\le n\le N$.  Boundary terms are
uniformly bounded because only finitely many shifts, depending on $m$, are
involved and $|\alpha_n|<1$.

\subsection{A discrete interpolation estimate}
\label{subsec:discrete-interpolation-estimate}

We next record the interpolation inequality used to absorb all remaining
normal-form monomials.  For $0\le r\le m$, define
\begin{equation}
\label{eq:interpolation-pr}
        p_r
        :=
        \frac{2(m+1)}{r+1}.
\end{equation}
Thus
$p_0=2m+2,\,\, p_m=2.$
The exponent $p_r$ is chosen so that the discrete
Gagliardo--Nirenberg interpolation between
$\alpha\in\ell^{2m+2}$ and $\Delta^m\alpha\in\ell^2$
controls $\Delta^r\alpha$ in $\ell^{p_r}$; see the remarks of \cite[Theorem 2.5] {BSZ-Duke2018}, and also see \cite[Lemma 4.3]{Yan2018} for references.

\begin{lemma}[Discrete Gagliardo--Nirenberg estimate]
\label{lem:discrete-GN}
Let $0\le r\le m$, and let $p_r$ be given by
\eqref{eq:interpolation-pr}.  Then there exist constants $C_m<\infty$ and an integer
$L_m\ge0$, depending only on $m$, such that for every $N\ge0$,
\begin{equation}
\label{eq:discrete-GN-finite}
        \left(
        \sum_{n=0}^{N}|\Delta^r\alpha_n|^{p_r}
        \right)^{1/p_r}
        \le
        C_m
        \left(
        \sum_{n=0}^{N+L_m}|\Delta^m\alpha_n|^2
        \right)^{\frac{r}{2m}}
        \left(
        \sum_{n=0}^{N+L_m}|\alpha_n|^{2m+2}
        \right)^{
        \frac{1}{2m+2}\left(1-\frac{r}{m}\right)}
        +
        C_m.
\end{equation}
The same estimate holds with $\alpha$ replaced by
$\overline\alpha$.
\end{lemma}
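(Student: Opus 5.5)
The plan is to reduce to the whole-line discrete Gagliardo--Nirenberg inequality for finitely supported sequences on $\mathbb Z$, and then localize to $[0,N]$ at the cost of endpoint terms. First we check that the exponents are consistent. With $\theta=r/m$,
$$
\frac{\theta}{2}+\frac{1-\theta}{2m+2}
=\frac{r(m+1)+(m-r)}{2m(m+1)}
=\frac{r+1}{2(m+1)}=\frac1{p_r}.
$$
Hence the target on $\mathbb Z$ is
\begin{equation*}
\|\Delta^r f\|_{\ell^{p_r}(\mathbb Z)}
\le C_m\,\|\Delta^m f\|_{\ell^2(\mathbb Z)}^{r/m}\,
\|f\|_{\ell^{2m+2}(\mathbb Z)}^{1-r/m}.
\end{equation*}
The right-hand side of \eqref{eq:discrete-GN-finite} has exactly this form, with $A=\sum_{n\le N+L_m}|\Delta^m\alpha_n|^2$ and $B=\sum_{n\le N+L_m}|\alpha_n|^{2m+2}$. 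For $r=0$ and $r=m$ the inequality is trivial, with the convention $A^0=1$.

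For $0<r<m$, I would prove the whole-line inequality by induction through the basic three-term estimate
$$
\|\Delta g\|_{p}^2\lesssim \|g\|_{p_-}\,\|\Delta^2 g\|_{p_+},
\qquad \frac2p=\frac1{p_-}+\frac1{p_+}.
$$
To get it, write
$$
\sum|\Delta g|^{p}=\sum \Delta g\cdot\overline{\Delta g}\,|\Delta g|^{p-2}
$$
and sum by parts using Lemma~\ref{lem:discrete-summation-by-parts}, which moves the difference onto $\overline{\Delta g}|\Delta g|^{p-2}$. The resulting discrete chain-rule error is controlled by $|\Delta^2 g|\,|\Delta g|^{p-2}$ up to bounded shifts, and Hölder's inequality finishes the step. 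Applying the three-term estimate to $g=\Delta^{j}f$ gives a log-convexity chain in $j$, and iterating it yields the stated interpolation with the exponents $p_r$. This is the standard route, as in \cite[Theorem 2.5]{BSZ-Duke2018} and \cite[Lemma 4.3]{Yan2018}, so this step can largely be cited.

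The localization takes $f_n=\alpha_n$ for $0\le n\le N+L_m$ and $f_n=0$ otherwise, with $L_m\ge m$. Then $\sum_{n=0}^N|\Delta^r\alpha_n|^{p_r}\le\|\Delta^r f\|_{p_r}^{p_r}$ and $\|f\|_{2m+2}^{2m+2}=B$. Moreover, $\Delta^m f$ differs from $\Delta^m\alpha$ only at $O(m)$ points near the two endpoints, each bounded by $2^m$, so $\|\Delta^m f\|_2^2\le A+C_m$.

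I expect the main obstacle to be the final bookkeeping. Subadditivity gives $(A+C_m)^{r/2m}\le A^{r/2m}+C_m^{r/2m}$, which produces the cross term $C\,B^{(1-r/m)/(2m+2)}$. That term is not a constant, because the sharp truncation creates an $O(1)$ jump in $\Delta^m f$ that is not proportional to $A$. I would try two fixes, in order.

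1. Replace the zero extension beyond $N+L_m$ by the degree-$(m-1)$ polynomial extrapolation of $\alpha$ near the endpoint, damped by a long cutoff of length $\ell$. Then $\Delta^m f$ beyond the endpoint is controlled by $\ell^{-1}$ and local $\Delta^j\alpha$ values, and letting $\ell$ grow makes the extra $\ell^2$ contribution small relative to the $\ell^{2m+2}$ contribution.
2. If that fails, settle for the weaker form with an extra summand $C_m B^{(1-r/m)/(2m+2)}$. This weaker form is all that the subsequent Young-inequality absorption needs, since that term has total homogeneity strictly below the critical count and Young's inequality still bounds it by $\varepsilon B+C_{\varepsilon,m}$.
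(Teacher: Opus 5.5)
Your route is the paper's: the whole-line discrete Gagliardo--Nirenberg inequality, applied to a finitely supported extension $f$ of $\alpha|_{[0,M]}$ with $M=N+L_m$. Your exponent check is correct. Your log-convexity sketch is sound, because $1/p_j=(j+1)/(2m+2)$ is affine in $j$, so $2/p_j=1/p_{j-1}+1/p_{j+1}$ holds exactly.

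The paper handles the localization in one sentence, declaring the boundary contributions $O_m(1)$. Your objection to that step is well founded. An $O(1)$ error in $\|\Delta^m f\|_2^2$ sits inside the power $r/2m$ and multiplies $B^{(1-r/m)/(2m+2)}$. For $\alpha_n\equiv\frac12$ one has $A=0$ and $B\asymp N$. So with any extension of bounded length, the whole-line inequality only yields a bound of order $N^{(m-r)/(m(2m+2))}$ for $1\le r<m$, whereas the lemma asserts $O(1)$.

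However, your proposal does not close this gap, so it does not prove the lemma as stated.
\begin{itemize}
\item Fix~2 replaces the lemma by a weaker inequality. Whatever its use downstream, it is not the statement.
\item Fix~1 fails as described. Its $\Delta^m$ side is fine: the degree-$(m-1)$ extrapolant $P$ is annihilated by $\Delta^m$, so there is no junction error, and a cutoff of length $\lambda$ costs only $O(\lambda^{-1})$. But $P(M+j)$ grows like $j^{m-1}$ times the local $(m-1)$-st difference of $\alpha$. Its $\ell^{2m+2}$-mass on the cutoff region can therefore be of order $\lambda^{(m-1)(2m+2)+1}$, which is not bounded by $C_mB+C_m$.
\item Nothing in your sketch controls these endpoint differences; that would need an extra Landau--Kolmogorov-type argument using smallness of $A$. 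The left endpoint is also not addressed.
\end{itemize}

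The missing idea is to extend on a length scale comparable to $N$, using an operator that is bounded on $\ell^{2m+2}$ and reproduces polynomials of degree $<m$. You then use $|\alpha_n|<1$ in the form $B\le M+1$. One concrete construction:
\begin{itemize}
\item Take $L_m\ge m$ and $\lambda\asymp M/m$, and let $\psi$ be smooth, equal to $1$ on $[0,\frac14]$ and to $0$ on $[\frac12,\infty)$.
\item Choose $c_1,\ldots,c_m$ with $\sum_{i=1}^m c_i(-i)^s=1$ for $0\le s\le m-1$; this is a Vandermonde system.
\item Set $f_n=\alpha_n$ on $[0,M]$, $f_{M+j}=\psi(j/\lambda)\sum_{i=1}^m c_i\alpha_{M-ij}$ for $j\ge1$, and on the left $f_{-j}=\psi(j/\lambda)\sum_{i}c_i\alpha_{ij}$.
\end{itemize}
Three observations then give the needed bounds.
\begin{itemize}
\item Because the reflection reproduces polynomials of degree $<m$, the finitely many junction values of $\Delta^m f$ are linear functionals annihilating such polynomials. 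Hence they are fixed combinations of finitely many $\Delta^m\alpha_k$ with $0\le k\le M$.
\item The identity $(P^{-i}-1)^m=(-1)^mP^{-im}(1+P+\cdots+P^{i-1})^m(P-1)^m$ expresses $\Delta^m$ of $j\mapsto\alpha_{M-ij}$ through $\Delta^m\alpha_k$, each $k$ appearing with bounded multiplicity.
\item Since $f$ is bounded, the Leibniz terms carrying $s\ge1$ differences on the cutoff contribute $O(\lambda^{1-2s})$.
\end{itemize}
Thus $\|f\|_{2m+2}^{2m+2}\le C_mB$ and $\|\Delta^m f\|_2^2\le C_mA+C_m\lambda^{-1}$. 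The whole-line inequality then gives the main term plus a cross term
$C_m\lambda^{-r/2m}B^{(1-r/m)/(2m+2)}\le C_m M^{(m-r(m+2))/(2m(m+1))}$.
This is bounded because $m-r(m+2)<0$ for $r\ge1$; the cases of bounded $M$ and $r=0$ are trivial.

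In short, the boundary cost must decay like a negative power of $N$ to beat the possible growth $B\asymp N$. No extension or cutoff of bounded length can achieve this.
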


\begin{proof}
The full-line version is the standard one-dimensional discrete
Gagliardo--Nirenberg inequality
$$\|\Delta^r u\|_{\ell^{p_r}}
        \le
        C_m
        \|\Delta^m u\|_{\ell^2}^{r/m}
        \|u\|_{\ell^{2m+2}}^{1-r/m}.$$
The exponents satisfy the scaling relation
$r-\frac1{p_r}
        =
        \frac{r}{m}
        \left(m-\frac12\right)
        -
        \left(1-\frac{r}{m}\right)\frac1{2m+2}.$
Applying the full-line inequality to a finitely supported extension of
$\{\alpha_n\}_{0\le n\le N+L_m}$ gives
\eqref{eq:discrete-GN-finite}.  The extension produces only finitely many
boundary terms.  Since $|\alpha_n|<1$ and every finite difference of fixed
order is a finite linear combination of bounded Verblunsky coefficients,
these boundary contributions are $O_m(1)$.
\end{proof}

For concision, set
\begin{equation}
\label{eq:A-B-definition}
        A_N
        :=
        \left(
        \sum_{n=0}^{N+L_m}|\Delta^m\alpha_n|^2
        \right)^{1/2},
        \,\,
        B_N
        :=
        \left(
        \sum_{n=0}^{N+L_m}|\alpha_n|^{2m+2}
        \right)^{1/(2m+2)}.
\end{equation}
Then Lemma~\ref{lem:discrete-GN} gives
\begin{equation}
\label{eq:GN-short-form}
        \|\Delta^r\alpha\|_{\ell^{p_r}(0,N)}
        \le
        C_m A_N^{r/m}B_N^{1-r/m}
        +
        C_m.
\end{equation}

\subsection{Absorption of the remaining normal-form monomials}
\label{subsec:absorption-normal-form-monomial}

We now treat the terms which remain after the quartic PSD principal block
has been separated.  A critical BSZ monomial of degree $2k$ has the form
\begin{equation}
\label{eq:absorption-monomial-form}
        M_n
        =
        \prod_{\nu=1}^{k}
        \left(\Delta^{a_\nu}\alpha\right)_{n+\ell_\nu}
        \prod_{\mu=1}^{k}
        \left(\Delta^{b_\mu}\overline{\alpha}\right)_{n+r_\mu},
\end{equation}
where the shifts are bounded in terms of $m$.  The normal-form construction
gives the difference count
\begin{equation}
\label{eq:absorption-critical-count}
        \sum_{\nu=1}^{k}a_\nu
        +
        \sum_{\mu=1}^{k}b_\mu
        \ge
        m+1-k.
\end{equation}
It is enough to treat the exact critical count
\begin{equation}
\label{eq:absorption-exact-critical-count}
        \sum_{\nu=1}^{k}a_\nu
        +
        \sum_{\mu=1}^{k}b_\mu
        =
        m+1-k.
\end{equation}
Terms with more differences are estimated in the same way, or by using
bounded shifts and bounded finite-difference estimates to reduce to the
same subcritical H\"older regime.

\begin{proposition}[Absorption of non-principal critical BSZ monomials]
\label{prop:absorption-critical-BSZ-monomials}
Let $2\le k\le m$, and let $M_n$ be a monomial of the form
\eqref{eq:absorption-monomial-form} satisfying
\eqref{eq:absorption-exact-critical-count}.  Assume that $M_n$ is not part
of the quartic PSD principal block isolated in
Subsection~\ref{subsec:quartic-critical-PSD-block}.  Then, for every
$\varepsilon>0$, there exists
$C_{\varepsilon,m}<\infty$ such that
\begin{equation}
\label{eq:absorption-critical-BSZ-monomials}
        \left|
        \sum_{n=0}^{N}M_n
        \right|
        \le
        \varepsilon
        \sum_{n=0}^{N+L_m}|\Delta^m\alpha_n|^2
        +
        \varepsilon
        \sum_{n=0}^{N+L_m}|\alpha_n|^{2m+2}
        +
        C_{\varepsilon,m}
\end{equation}
for all $N\ge0$.
\end{proposition}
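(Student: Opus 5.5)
The plan is to estimate $\sum_n M_n$ by H\"older's inequality across the $2k$ factors, bound each factor with the discrete Gagliardo--Nirenberg estimate (Lemma~\ref{lem:discrete-GN}), and finish with Young's inequality, whose strict subcriticality supplies the arbitrary $\varepsilon$. Write the difference orders as $s_1,\dots,s_{2k}$ (these are the $a_\nu$ and $b_\mu$), with $s:=\sum_i s_i$. First shift each factor back to index $n$: by Lemma~\ref{lem:bounded-shifts-finite-sums} and Lemma~\ref{lem:remainder-class-properties}, the bounded shifts $\ell_\nu,r_\mu$ only change the summation range to $[0,N+L_m]$ and add $O_m(1)$. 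Next reduce every factor with $s_i>m$ to order exactly $m$. Since $|\alpha_n|<1$, we have $|\Delta^{s_i}\alpha_n|\le C_m\sum_{j\le s_i-m}|\Delta^m\alpha_{n+j}|$, and such a factor is at least as good as a $\Delta^m$ factor. This leaves every $s_i\in\{0,\dots,m\}$.

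H\"older step. Factor $i$ is placed in $\ell^{q_i}$ with $q_i\ge p_{s_i}=2(m+1)/(s_i+1)$ and $\sum_i 1/q_i=1$. This is possible as soon as
$$\sum_{i=1}^{2k}\frac1{p_{s_i}}=\frac{s+2k}{2m+2}\ge1,$$
because the finite-volume norms are monotone, $\|f\|_{\ell^{q}(0,N)}\le\|f\|_{\ell^{p}(0,N)}$ for $q\ge p$. Lemma~\ref{lem:discrete-GN} then gives
$$\Bigl|\sum_{n=0}^N M_n\Bigr|\le\prod_{i}\bigl(C_mA_N^{s_i/m}B_N^{1-s_i/m}+C_m\bigr).$$
Expanding the product yields a finite sum of terms $A_N^{\sigma/m}B_N^{j-\sigma/m}$, where $\sigma\le s$ is the difference count of a subfamily of $j\le 2k$ factors.

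Young step. For each such term, apply Young's inequality against $A_N^2$ and $B_N^{2m+2}$. The term is then $\le\varepsilon(A_N^2+B_N^{2m+2})+C_{\varepsilon,m}$ precisely when the scaling weight is strictly below one:
$$\theta:=\frac{\sigma}{2m}+\frac{j-\sigma/m}{2m+2}<1.$$
For the full product ($j=2k$, $\sigma=s$) this reads $s+2mk<2m(m+1)$, i.e.\ $s<2m(m+1-k)$. This is comfortably true when $s$ equals the exact critical count $m+1-k$ and $k\ge2$, since then the left side is $m+1-k+2mk<2m^2+2m$. The subproducts have smaller $\theta$. Strictness of $\theta<1$ is what produces the arbitrary $\varepsilon$, as Lemma~\ref{lem:absorption-principle} requires.

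Main obstacle. The H\"older condition $s+2k\ge 2m+2$ is \emph{not} implied by the exact count $s=m+1-k$: that count gives only $(m+1+k)/(2m+2)<1$. So the key step is to gain integrability, and my first attempt is the following. When $\sum_i 1/p_{s_i}<1$, bound the surplus part of the monomial pointwise by $|\alpha|$-powers instead of placing it in $\ell^1$. Concretely, I would use $|\Delta^{s_i}\alpha_n|\le C_m\max_{|j|\le m}|\alpha_{n+j}|$ for the undifferentiated-type factors, and then pair factors through Cauchy--Schwarz against the quadratic energy $\sum|\Delta^m\alpha|^2$ together with the strict Taylor tail $\sum_j|\alpha_n|^{2j}/j$. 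Any deficit this cannot cover would have to be closed by redistributing differences with Lemma~\ref{lem:redistribution-finitely-many-differences}, so that some factor reaches order $m$ and then carries $\ell^2$-weight $1/2$. The dangerous monomials are exactly the lowest-order quartic ones with $k=2$ and difference count $m-1$ split as $(m-1)$ differences across factors, where no slack remains. These are the terms sitting in the principal block $\mathcal P_m$. The hypothesis that $M_n$ is not part of that block is what must guarantee either an extra difference or an extra undifferentiated degree, i.e.\ a configuration with $\sum_i 1/p_{s_i}\ge1$. Verifying this dichotomy from the structure of $\widetilde G_4^{\rm crit}$ minus $F_m^{\rm Yan}$, and its analogue for $k\ge3$, is the step I expect to be hardest.
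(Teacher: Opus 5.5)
\textbf{Gap: the deferred step cannot be supplied, and the estimate is false for individual exact-count monomials.}

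Your diagnosis of the H\"older step is correct, and it is exactly where the paper's own proof fails. The paper runs the same H\"older--Gagliardo--Nirenberg--Young chain with the exponents $p_{a_\nu},p_{b_\mu}$. It applies H\"older even though $\sum 1/p=\frac{m+1+k}{2(m+1)}<1$, appealing to ``an additional harmless constant factor''. On $[0,N]$ the missing factor is $(N+L_m+1)^{\frac{m+1-k}{2(m+1)}}$, which is unbounded.

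Your proposed repair does not work either.
\begin{itemize}
\item Your Young exponent is miscomputed. In fact $\theta=\frac{\sigma}{2m}+\frac{j-\sigma/m}{2m+2}=\frac{\sigma+j}{2m+2}$. For the full product this gives $\theta=\sum_i 1/p_{s_i}$, so $\theta<1$ means $s+2k<2m+2$, not $s<2m(m+1-k)$. (The paper's own subcriticality number $\frac{m+1+k}{2(m+1)}$ is literally its H\"older exponent sum.)
\item Consequently the condition that legitimizes H\"older, $\sum_i1/p_{s_i}\ge1$, is exactly the negation of the strict subcriticality that produces an arbitrary $\varepsilon$. Redistributing differences until H\"older applies leaves $\theta\ge1$, and Young then gives at best a fixed constant.
\item Bounding factors pointwise by $\max_j|\alpha_{n+j}|$ puts them in $\ell^\infty$, which only lowers the exponent sum further.
\item The non-principal hypothesis excludes nothing. $\mathcal P_m$ is homogeneous of degree $2m-2$ in $(X,Y)$, i.e.\ it carries $2m-2$ differences. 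An exact-count quartic monomial carries only $m-1<2m-2$. So the monomials you call dangerous are not in $\mathcal P_m$.
\end{itemize}

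The estimate is in fact false monomial by monomial. Take $m=k=2$ and $M_n=\alpha_n^2\,\overline{\alpha_n}\,\overline{(\Delta\alpha)_n}$, which has count $1=m+1-k$. Let $\alpha_n=ce^{in\omega}$ with $0<c<1$ and $2\sin(\omega/2)=c$.
\begin{itemize}
\item Then $M_n=c^4(e^{-i\omega}-1)$, so $|\sum_{n=0}^{N}M_n|=(N+1)c^5$.
\item Also $|\Delta^2\alpha_n|^2=|\alpha_n|^6=c^6$, so the right side is $2\varepsilon(N+L_m+1)c^6+C_{\varepsilon,m}$.
\item Letting $N\to\infty$ forces $c\ge 1/(2\varepsilon)$, which fails for small $c$.
\end{itemize}
Now concatenate such plane-wave blocks with amplitudes $c_j=2^{-j}$ and lengths about $2^{6j}j^{-2}$. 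This gives $\alpha_n\to0$, $\Delta^2\alpha\in\ell^2$ and $\alpha\in\ell^6$. Yet the block contributions, each of size about $2^{j}j^{-2}$ and pointing in nearly the same direction, make $|\sum_{n\le N}M_n|\to\infty$. So neither an $\alpha$-dependent constant nor Rakhmanov-type smallness can rescue the bound.

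The scaling $\alpha_n=\delta f(\delta n)$ shows that the scale-invariant difference count in degree $2k$ is $2(m+1-k)$, not $m+1-k$. Any valid absorption would therefore have to use cancellations in the full critical expressions: more diagonal vanishing, reality, or positivity at the borderline count. Neither your proposal nor the paper's proof supplies this.
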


\begin{proof}
Finite shifts in \eqref{eq:absorption-monomial-form} are harmless, so all
factors may be estimated over the common interval $[0,N+L_m]$.

For each holomorphic factor with difference order $a_\nu$, use the
exponent
$p_{a_\nu}
        =
        \frac{2(m+1)}{a_\nu+1},$
and for each antiholomorphic factor with difference order $b_\mu$, use
$p_{b_\mu}
        =
        \frac{2(m+1)}{b_\mu+1}.$
By \eqref{eq:absorption-exact-critical-count},
$$\begin{aligned}
        \sum_{\nu=1}^{k}\frac1{p_{a_\nu}}
        +
        \sum_{\mu=1}^{k}\frac1{p_{b_\mu}}
        &=
        \frac{
        \sum_{\nu=1}^{k}(a_\nu+1)
        +
        \sum_{\mu=1}^{k}(b_\mu+1)}
        {2(m+1)}
\\
        &=
        \frac{(m+1-k)+2k}{2(m+1)}
\\
        &=
        \frac{m+1+k}{2(m+1)}
        <
        1.
\end{aligned}$$
Hence H\"older's inequality, with an additional harmless constant factor,
gives
$$\left|
        \sum_{n=0}^{N}M_n
        \right|
        \le
        C_m
        \prod_{\nu=1}^{k}
        \|\Delta^{a_\nu}\alpha\|_{\ell^{p_{a_\nu}}(0,N+L_m)}
        \prod_{\mu=1}^{k}
        \|\Delta^{b_\mu}\alpha\|_{\ell^{p_{b_\mu}}(0,N+L_m)}
        +
        C_m.$$
Applying Lemma~\ref{lem:discrete-GN} to every factor yields
$$\left|
        \sum_{n=0}^{N}M_n
        \right|
        \le
        C_m
        A_N^{\sigma/m}
        B_N^{2k-\sigma/m}
        +
        C_m
        \sum_{\rho} A_N^{\eta_\rho}B_N^{\zeta_\rho}
        +
        C_m,$$
where
$\sigma
        =
        \sum_{\nu=1}^{k}a_\nu
        +
        \sum_{\mu=1}^{k}b_\mu
        =
        m+1-k.$
The additional terms arise from expanding the additive constants in
\eqref{eq:GN-short-form}; their exponent pairs satisfy
$0\le \eta_\rho\le \frac{\sigma}{m},
        \,\,
        0\le \zeta_\rho\le 2k-\frac{\sigma}{m}.$

The leading product is subcritical relative to
$A_N^2+B_N^{2m+2}$.  Indeed,
$$\begin{aligned}
        \frac{\sigma/m}{2}
        +
        \frac{2k-\sigma/m}{2m+2}
        &=
        \frac{\sigma}{2m}
        +
        \frac{k}{m+1}
        -
        \frac{\sigma}{2m(m+1)}
\\
        &=
        \frac{\sigma}{2(m+1)}
        +
        \frac{k}{m+1}
\\
        &=
        \frac{m+1-k+2k}{2(m+1)}
\\
        &=
        \frac{m+1+k}{2(m+1)}
        <
        1.
\end{aligned}$$
Therefore Young's inequality gives, for every $\varepsilon>0$,
$C_m A_N^{\sigma/m}B_N^{2k-\sigma/m}
        \le
        \varepsilon A_N^2
        +
        \varepsilon B_N^{2m+2}
        +
        C_{\varepsilon,m}.$
The lower-order terms are handled in the same way.  Using the definitions of
$A_N$ and $B_N$ gives
\eqref{eq:absorption-critical-BSZ-monomials}.
\end{proof}

\subsection{The higher critical remainder is absorbable}
\label{subsec:higher-critical-absorbable}

Let
$R_m^{(N)}(\alpha)$
denote the non-principal higher critical contribution after the quartic PSD
block $\mathcal P_m^{(N)}$ has been separated.  By the BSZ normal-form
decomposition from Section~\ref{sec:bsz-normal-form}, this remainder has
the finite-volume representation
\begin{equation}
\label{eq:R-normal-form-section8}
        R_m^{(N)}(\alpha)
        =
        \sum_{k=2}^{m}
        \sum_{\rho\in\mathcal A_{m,k}}
        c_\rho
        \sum_{n=0}^{N}M_{\rho,n}
        +
        O_m(1),
\end{equation}
where each $M_{\rho,n}$ is a non-principal critical BSZ monomial of degree
$2k$.  The index sets $\mathcal A_{m,k}$ are finite and depend only on
$m$.

\begin{theorem}[Absorbability of the non-principal critical remainder]
\label{thm:higher-critical-absorbable}
For every $\varepsilon>0$, there exists
$C_{\varepsilon,m}<\infty$ such that
\begin{equation}
\label{eq:higher-critical-absorbable}
        |R_m^{(N)}(\alpha)|
        \le
        \varepsilon
        \sum_{n=0}^{N+L_m}|\Delta^m\alpha_n|^2
        +
        \varepsilon
        \sum_{n=0}^{N+L_m}|\alpha_n|^{2m+2}
        +
        C_{\varepsilon,m}
\end{equation}
for all $N\ge0$.  Equivalently,
$R_m\in\mathfrak R_m.$
\end{theorem}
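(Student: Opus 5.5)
The estimate follows by combining the finite-volume normal-form representation \eqref{eq:R-normal-form-section8} with Proposition~\ref{prop:absorption-critical-BSZ-monomials} and the stability properties of the absorbable class $\mathfrak R_m$ (Lemma~\ref{lem:remainder-class-properties}). I would start from \eqref{eq:R-normal-form-section8}. It writes $R_m^{(N)}(\alpha)$ as a finite linear combination, over the index sets $\mathcal A_{m,k}$ with $2\le k\le m$, of finite sums of non-principal critical BSZ monomials $M_{\rho,n}$, plus an $O_m(1)$ term. That $O_m(1)$ term collects the telescoping changes of representative (class $\mathfrak T_m$) and the endpoint corrections. Since the index sets and the coefficients $c_\rho$ depend only on $m$, it suffices to prove \eqref{eq:higher-critical-absorbable} for one monomial $\sum_{n\le N}M_{\rho,n}$ at a time. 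Parts (1), (3) and (4) of Lemma~\ref{lem:remainder-class-properties} then assemble the pieces. In part (1) we replace $\varepsilon$ by $\varepsilon/(\#\mathcal A\cdot\max|c_\rho|)$ and take the maximum of the finitely many shift allowances $L_{\rho,m}$ as a common $L_m$. Any smaller shift allowance is enlarged to $L_m$ for free, because the coercive sums are nonnegative.

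For each individual monomial, Proposition~\ref{prop:absorption-critical-BSZ-monomials} gives the estimate when the difference count is exactly $m+1-k$. The remaining case has difference count $s>m+1-k$, and I would reduce it to one of two situations. If every individual order satisfies $a_\nu,b_\mu\le m$, the H\"older and Gagliardo--Nirenberg argument in that proof applies verbatim with $\sigma=s$. The Hölder exponent sum is $(s+2k)/(2m+2)$ and the Young scaling exponent is $(s+2k)/(2(m+1))$. Both stay below $1$ as long as $s+2k<2m+2$, and then Young's inequality still produces the $\varepsilon$-bound.

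If instead $s+2k\ge 2m+2$, or some factor carries more than $m$ differences, I would first move differences by Lemma~\ref{lem:redistribution-finitely-many-differences}, working modulo $\mathfrak T_m$. The aim is to put at least $m$ differences on one holomorphic factor and at least $m$ on one antiholomorphic factor. Using \eqref{eq:higher-difference-bounded}, any excess $\Delta^{r}$ with $r>m$ is rewritten as $\Delta^{r-m}$ applied to $\Delta^m\alpha$, and $\Delta^{r-m}$ of a bounded sequence is a bounded combination of shifts of $\Delta^m\alpha$. The other factors are bounded in $\ell^\infty$. The monomial is then bounded by $C_m\sum|\Delta^m\alpha_{n+\ell}|\,|\Delta^m\alpha_{n+\ell'}|$, which Cauchy--Schwarz controls by the energy with a constant but not with an $\varepsilon$. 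To get the $\varepsilon$, I would keep one undifferentiated or low-order factor in $\ell^{p_r}$ norm and use Hölder together with Lemma~\ref{lem:discrete-GN}. The exponent then becomes strictly subcritical, and Young's inequality applies.

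The principal quartic block has already been removed by hypothesis (it is handled by Proposition~\ref{prop:quartic-critical-PSD-Gram}), so no monomial of $R_m^{(N)}$ lies at the critical scaling threshold $(s+2k)/(2m+2)=1$. The step I expect to be the main obstacle is exactly this bookkeeping. One must confirm that after the principal block is separated, every surviving monomial either has $s+2k<2m+2$ or can be redistributed into a strictly subcritical Hölder configuration. This is necessary because the $\varepsilon$-smallness comes only from the strict inequality in the Young exponent. I would check it by comparing scaling degrees: a degree-$2k$ monomial with $s$ differences scales like $A_N^{s/m}B_N^{2k-s/m}$, and the only configurations reaching total weight $1$ are those of the quartic principal type, which were removed. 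Summing the finitely many resulting bounds gives \eqref{eq:higher-critical-absorbable}, and hence $R_m\in\mathfrak R_m$.
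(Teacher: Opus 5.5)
Your top-level reduction is the paper's proof of this theorem: expand $R_m^{(N)}$ by \eqref{eq:R-normal-form-section8}, apply Proposition~\ref{prop:absorption-critical-BSZ-monomials} to each monomial with $\varepsilon$ divided by a constant depending on the number of terms and the $c_\rho$, and absorb the $O_m(1)$ terms. The problem is the H\"older/Young step you use for the remaining cases, and the cited proposition rests on the same step.

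On $[0,N+L_m]$ with counting measure, $\sum_n|f_1\cdots f_{2k}|\le\prod_i\|f_i\|_{\ell^{p_i}}$ holds with an $N$-independent constant if and only if $\theta:=\sum_i 1/p_i\ge1$. For $\theta<1$ the sharp constant is $(N+L_m+1)^{1-\theta}$ (take every $f_i\equiv1$). So the ``harmless constant factor'' in that proof actually grows with $N$.

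Moreover, a factor $\Delta^r\alpha$ estimated in $\ell^{p_r}$ contributes $1/p_r=\frac{r+1}{2m+2}$ to the H\"older sum. Via Lemma~\ref{lem:discrete-GN} it contributes exactly $\frac{r}{2m}+\frac{m-r}{m(2m+2)}=\frac{r+1}{2m+2}$ to the Young weight. Factors bounded in $\ell^\infty$ contribute $0$ to both, and the same equality persists for $r>m$ and for interpolation with $\ell^\infty$.

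Hence ``H\"older is valid'' and ``Young weight $<1$'' are mutually exclusive. Your case $s+2k<2m+2$ is exactly where H\"older loses a power of $N$. For $s+2k\ge2m+2$ the best attainable is weight $1$, i.e.\ a bound $C_m(A_N^2+B_N^{2m+2})$ with no $\varepsilon$; keeping a low-order factor in $\ell^{p_r}$ cannot make the bound strictly subcritical. Your claim that only quartic-principal configurations reach weight $1$ is also wrong: any degree $2k$ with $s=2m+2-2k$ has weight $1$, e.g.\ sextic monomials with $s=2$ in $\mathcal C_{3,3}$.

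The per-monomial estimate itself is false, not just this method of proving it. Take $m=k=2$ and $M_n=|\alpha_n|^2\overline{\alpha}_n(\Delta\alpha)_n$. It has difference count $1=m+1-k$ and is not in the quartic block, which carries $2m-2=2$ differences. Let $\alpha_n=ce^{icn}$ with $0<c<1$. Then
\[
\Bigl|\sum_{n=0}^N M_n\Bigr|=(N+1)c^4|e^{ic}-1|\ge\tfrac12(N+1)c^5 .
\]
On the other side, $|\Delta^2\alpha_n|^2=c^2|e^{ic}-1|^4\le c^6$ and $|\alpha_n|^6=c^6$. So the right side of \eqref{eq:absorption-critical-BSZ-monomials} is at most $2\varepsilon(N+L_m+1)c^6+C_{\varepsilon,m}$. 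Letting $N\to\infty$ forces $c\ge1/(4\varepsilon)$, which fails for small $c$, even if the constant were allowed to depend on $\alpha$.

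More generally, place infinitely many widely separated copies of a fixed finitely supported profile $\beta$. Then both $\sum_{n\le N}M_n$ and the coercive sums grow like (number of copies)$\times$(their values on $\beta$). So any $N$-uniform $\varepsilon$-bound forces $\sum_{n\in\mathbb Z}M_n(\beta)=0$ for every such $\beta$. Only essentially telescoping densities can satisfy it, whatever their difference count.

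Thus citing Proposition~\ref{prop:absorption-critical-BSZ-monomials} does not close the gap either. Establishing \eqref{eq:higher-critical-absorbable} would require cancellation within the specific combination $R_m$, sign information, or smallness of $\alpha_n$ for large $n$; term-by-term exponent counting cannot supply the $\varepsilon$.
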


\begin{proof}
Apply Proposition~\ref{prop:absorption-critical-BSZ-monomials} to each
monomial in the finite expansion \eqref{eq:R-normal-form-section8}, with
$\varepsilon$ divided by a sufficiently large constant depending only on
the number of terms and their coefficients.  The endpoint and telescoping
terms are $O_m(1)$ and are absorbed into $C_{\varepsilon,m}$.
\end{proof}

\subsection{The finite-volume coercive estimate}
\label{subsec:finite-volume-coercive-estimate}

We now combine the preceding estimates.  The finite-volume sum rule
decomposition has the form
\begin{equation}
\label{eq:finite-volume-decomposition-section8}
        \mathcal K_m^{(N)}(\mu)
        =
        Q_m^{(N)}(\alpha)
        +
        \mathcal P_m^{(N)}(\alpha)
        +
        R_m^{(N)}(\alpha)
        +
        \sum_{n=0}^{N}L_{m,n}
        +
        O_m(1),
\end{equation}
where $Q_m^{(N)}$ is the quadratic coercive term,
$\mathcal P_m^{(N)}$ is the quartic PSD principal block,
$R_m^{(N)}$ is the absorbable non-principal critical remainder, and
$L_{m,n}
        =
        \log\frac1{1-|\alpha_n|^2}
        -
        \sum_{j=1}^{m}\frac{|\alpha_n|^{2j}}{j}.$
By the logarithmic tail estimate,
\begin{equation}
\label{eq:tail-section8}
        \sum_{n=0}^{N}L_{m,n}
        \ge
        \frac1{m+1}
        \sum_{n=0}^{N}|\alpha_n|^{2m+2}.
\end{equation}
By quadratic coercivity,
\begin{equation}
\label{eq:quadratic-section8}
        Q_m^{(N)}(\alpha)
        \ge
        2^{-m}
        \sum_{n=0}^{N}|\Delta^m\alpha_n|^2
        -
        C_m.
\end{equation}
By \eqref{eq:quartic-block-nonnegative-finite-volume},
\begin{equation}
\label{eq:quartic-PSD-lower-section8}
        \mathcal P_m^{(N)}(\alpha)\ge -C_m.
\end{equation}
Finally, by Theorem~\ref{thm:higher-critical-absorbable}, for every
$\varepsilon>0$,
\begin{equation}
\label{eq:R-lower-section8}
        R_m^{(N)}(\alpha)
        \ge
        -
        \varepsilon
        \sum_{n=0}^{N+L_m}|\Delta^m\alpha_n|^2
        -
        \varepsilon
        \sum_{n=0}^{N+L_m}|\alpha_n|^{2m+2}
        -
        C_{\varepsilon,m}.
\end{equation}

Substituting
\eqref{eq:tail-section8},
\eqref{eq:quadratic-section8},
\eqref{eq:quartic-PSD-lower-section8}, and
\eqref{eq:R-lower-section8}
into \eqref{eq:finite-volume-decomposition-section8}, we obtain
$$\begin{aligned}
        \mathcal K_m^{(N)}(\mu)
        &\ge
        \left(2^{-m}-\varepsilon\right)
        \sum_{n=0}^{N}|\Delta^m\alpha_n|^2
        +
        \left(\frac1{m+1}-\varepsilon\right)
        \sum_{n=0}^{N}|\alpha_n|^{2m+2}
\\
        &\,\,
        -
        \varepsilon
        \sum_{n=N+1}^{N+L_m}|\Delta^m\alpha_n|^2
        -
        \varepsilon
        \sum_{n=N+1}^{N+L_m}|\alpha_n|^{2m+2}
        -
        C_{\varepsilon,m}.
\end{aligned}$$
The endpoint sums are uniformly bounded because $|\alpha_n|<1$ and
$\Delta^m\alpha_n$ is a fixed finite linear combination of Verblunsky
coefficients with coefficients depending only on $m$.  Hence
$\sum_{n=N+1}^{N+L_m}|\Delta^m\alpha_n|^2
        +
        \sum_{n=N+1}^{N+L_m}|\alpha_n|^{2m+2}
        \le C_m.$
Choose
$\varepsilon
        =
        \frac12
        \min\left\{
        2^{-m},\frac1{m+1}
        \right\}.$
Then the preceding lower bound gives the following theorem.

\begin{theorem}[Coercive finite-volume estimate]
\label{thm:coercive-finite-volume-estimate}
For every $m\ge1$, there exist constants $c_m>0$ and
$C_m<\infty$ such that, for every $N\ge0$,
\begin{equation}
\label{eq:coercive-finite-volume-estimate}
        \mathcal K_m^{(N)}(\mu)
        \ge
        c_m
        \sum_{n=0}^{N}|\Delta^m\alpha_n|^2
        +
        c_m
        \sum_{n=0}^{N}|\alpha_n|^{2m+2}
        -
        C_m.
\end{equation}
\end{theorem}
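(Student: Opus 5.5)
The plan is to assemble the four ingredients already established in this section into the decomposition \eqref{eq:finite-volume-decomposition-section8}; no new estimate should be needed. I start from Yan's finite-volume sum rule, Theorem~\ref{thm:yan-finite-volume-sum-rule}, in the rewritten form \eqref{eq:constant-section-critical-form}. There the diagonal constants $-1/k$ of Proposition~\ref{prop:constant-part-minus-one-over-k} have been merged with the logarithm into $\sum_n L_{m,n}$.

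The higher critical sum is then split into two parts. The first is the quartic principal block $\mathcal P_m^{(N)}$, obtained after passing to the quotient-algebra representative $\widetilde G_4^{\rm crit}$. The second is the non-principal remainder, which is written in BSZ normal form via Theorem~\ref{thm:BSZ-normal-form-critical} and Proposition~\ref{prop:finite-volume-BSZ-normal-form-Rm}. Replacing $G_4^{\rm crit}$ by $\widetilde G_4^{\rm crit}$ changes the density only by a telescoping term in $\mathfrak T_m$. By \eqref{eq:T-class-bound}, this is $O_m(1)$ uniformly in $N$.

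Each piece is bounded below using an earlier result:
\begin{itemize}
\item For the logarithmic tail, Lemma~\ref{lem:logarithmic-tail} gives $\sum_{n\le N} L_{m,n}\ge \frac1{m+1}\sum_{n\le N}|\alpha_n|^{2m+2}$.
\item For the quadratic term, Proposition~\ref{prop:quadratic-coercivity} gives $Q_m^{(N)}\ge 2^{-m}\sum_{n\le N}|\Delta^m\alpha_n|^2-C_m$.
\item For the quartic block, Proposition~\ref{prop:quartic-critical-PSD-Gram} writes $\mathcal P_m$ as an integral of squares $\int\!\!\int (\sum_\alpha c_\alpha Z^\alpha)^2$. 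Under the coefficient map this becomes an integral of sums of terms of the form $|\cdot|^2$, up to telescoping redistribution of shifts. Hence \eqref{eq:quartic-block-nonnegative-finite-volume} gives $\mathcal P_m^{(N)}\ge -C_m$.
\item For the remainder, Theorem~\ref{thm:higher-critical-absorbable} gives, for any $\varepsilon>0$, the lower bound $R_m^{(N)}\ge -\varepsilon(A_N^2+B_N^{2m+2})-C_{\varepsilon,m}$.
\end{itemize}

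Adding these four bounds, the positive sums enter with coefficients $2^{-m}-\varepsilon$ and $\frac1{m+1}-\varepsilon$. The overflow sums over $N<n\le N+L_m$ come with a negative sign, so they must be controlled. Each such term is bounded by $C_m$, since $|\alpha_n|<1$ and $\Delta^m\alpha_n$ is a fixed binomial combination of Verblunsky coefficients; alternatively one can invoke Lemma~\ref{lem:absorption-principle}. Taking $\varepsilon=\frac12\min\{2^{-m},\frac1{m+1}\}$ then gives the claimed estimate with $c_m=\varepsilon$. When $m=1$ the block $\mathcal P_m$ and the remainder $R_m$ are absent, and the same argument applies directly.

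The main obstacle is the transfer step for the quartic block, from the algebraic Gram identity to the finite-volume bound $\mathcal P_m^{(N)}\ge -C_m$. The Gram form is written in the linearized variables $X_j,Y_j$ under the constraint $X_1+X_2+Y_1+Y_2=0$. One has to check two things. First, the constraint corresponds to summation-invariance, that is, to the quotient by $x_1x_2y_1y_2=1$, which is exactly a telescoping identity. Second, each square $(\sum c_\alpha Z^\alpha)^2$ maps to a term of the form $|\sum c_\alpha(\Delta\text{-monomial})|^2$, and the pairing is genuinely Hermitian, holomorphic against antiholomorphic. The higher homogeneous corrections to the lowest part belong to the remainder and are covered by Theorem~\ref{thm:higher-critical-absorbable}. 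I would accept \eqref{eq:quartic-block-nonnegative-finite-volume} as stated and use it as the input here.
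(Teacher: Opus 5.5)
Your assembly is the paper's proof of this theorem essentially verbatim. It uses the same decomposition \eqref{eq:finite-volume-decomposition-section8}, the same four lower bounds, the same handling of the overflow sums over $N<n\le N+L_m$, and the same $\varepsilon=\frac12\min\{2^{-m},\frac1{m+1}\}$, so the bookkeeping is not the issue. The gap is in two of the imported bounds, and the first is exactly the check you list and then waive. Your first condition is fine to leading order, since the quotient relation $x_1x_2y_1y_2=1$ is a telescoping identity. Your second condition fails. The Gram vector $\mathbf W_m$ in Proposition~\ref{prop:quartic-critical-PSD-Gram} is built from monomials in $Z=(X_1,X_2,Y_1)$, which mixes variables attached to $\alpha$-factors with one attached to an $\overline{\alpha}$-factor. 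A real sum of squares in $Z$ is not a pairing of a polynomial in $(x_1,x_2)$ with the conjugate polynomial in $(y_1,y_2)$, so the coefficient map produces no $|\cdot|^2$. Concretely, for $\alpha_n=\delta e^{i\omega n}$ one has $[\phi_4(\mathcal P)]_n=\delta^4\mathcal P(e^{i\omega},e^{i\omega},e^{-i\omega},e^{-i\omega})$. All $Z_j$ are then purely imaginary to first order, and \eqref{eq:P_m-integral-form-section8} gives the density $\delta^4\bigl((-1)^{m-1}\frac{m(2m-1)}{\binom{2m}{m}}\omega^{2m-2}+O(\omega^{2m-1})\bigr)$. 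For $m=2$ the Gram representative gives exactly $\delta^4(\frac16|a|^4+a^2)$ with $a=e^{i\omega}-1$. Its real part, $-\delta^4|a|^2(\cos\omega-\frac16|a|^2)$, is negative for $0<|\omega|<\pi/3$. Hence $\operatorname{Re}\mathcal P_2^{(N)}\to-\infty$, and \eqref{eq:quartic-block-nonnegative-finite-volume} already fails for $m=2$.

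The absorption input fails as well. In Proposition~\ref{prop:absorption-critical-BSZ-monomials} the H\"older exponents satisfy $\sum 1/p=\frac{m+1+k}{2(m+1)}<1$. On $[0,N+L_m]$ the missing exponent must then be carried by the constant sequence, at the cost of a factor $(N+L_m+1)^{(m+1-k)/(2(m+1))}$, which is not a harmless constant. H\"older on sequences needs $\sum 1/p\ge1$, while the subsequent Young step needs the very same number to be $<1$. The proposition is in fact false. For $m=k=2$, the monomial $M_n=\alpha_n^2\,\overline{\alpha_n}\,\overline{(\Delta\alpha)_n}$ has the required single difference. Yet for $\alpha_n=\delta e^{i\delta n}$ one gets $|\sum_{n\le N}M_n|\approx N\delta^5$, against $\sum_{n\le N}(|\Delta^2\alpha_n|^2+|\alpha_n|^6)\approx 2N\delta^6$. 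Scaling ($|\alpha_n|\sim\delta$, each difference costing a factor $\sim\delta$, coercive density $\sim\delta^{2m+2}$) shows that the natural threshold for $\varepsilon$-absorption of a degree-$2k$ term is $\sigma>2(m+1-k)$ differences, not $\sigma\ge m+1-k$. Terms at $\sigma=2(m+1-k)$, e.g.\ $|\alpha_n|^4|\Delta\alpha_n|^2$ for $m=k=3$, are as large as the coercive terms and need genuine Hermitian sign information for every $2\le k\le m$. Neither your proposal nor the paper supplies that structural input. Only for $m=1$, where both inputs are vacuous, is your argument complete.
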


\begin{proof}
The proof is exactly the preceding argument.  Equivalently, one may view the
estimate as the combination of the quadratic coercive lower bound, the
positive semidefinite quartic block, the logarithmic tail lower bound, and
the abstract absorption principle applied to
$R_m\in\mathfrak R_m$.
\end{proof}

\subsection{Consequences for the necessity direction}
\label{subsec:consequences-necessity}

The estimate \eqref{eq:coercive-finite-volume-estimate} is the main
finite-volume inequality needed for the necessity direction.  If the
spectral side of the sum rule is finite, then the finite-volume quantities
$\mathcal K_m^{(N)}(\mu)$ are bounded above uniformly in $N$.  Hence
\eqref{eq:coercive-finite-volume-estimate} implies
$\sup_{N\ge0}
        \sum_{n=0}^{N}|\Delta^m\alpha_n|^2
        <
        \infty,
        \,\,
        \sup_{N\ge0}
        \sum_{n=0}^{N}|\alpha_n|^{2m+2}
        <
        \infty.$
Therefore
\begin{equation}
\label{eq:necessity-conclusion-section8}
        \Delta^m\alpha\in\ell^2,
        \,\,
        \alpha\in\ell^{2m+2}.
\end{equation}

This is the desired coefficient-side conclusion.  The remaining step in the
paper is to connect the boundedness of the finite-volume spectral quantities
$\mathcal K_m^{(N)}(\mu)$ with the assumed finiteness of the higher-order
Szeg\H{o} integral and then pass from the finite-volume estimate to the
final necessity theorem.

\section{Proof of the arbitrary-$m$ necessity theorem}
\label{sec:proof-necessity-arbitrary-m}

We now prove the necessity direction for arbitrary $m$.  The argument is a
direct consequence of the coercive finite-volume estimate proved in
Section~\ref{sec:absorption-coercive-estimate}, together with the
finite-volume higher-order sum rule recalled in
Section~\ref{sec:yan-finite-volume}.

The main point is simple.  The spectral hypothesis gives a uniform upper
bound on the finite-volume spectral quantities
$\mathcal K_m^{(N)}(\mu)$.  The coercive finite-volume estimate then
forces uniform boundedness of the partial sums
$\sum_{n=0}^{N}|\Delta^m\alpha_n|^2,
        \,\,
        \sum_{n=0}^{N}|\alpha_n|^{2m+2}.$
Letting $N\to\infty$ gives
$\Delta^m\alpha\in\ell^2,
        \,\,
        \alpha\in\ell^{2m+2}.$

\subsection{Statement of the necessity theorem}
\label{subsec:statement-necessity-theorem}

Let
$d\mu(e^{i\theta})
        =
        w(\theta)\frac{d\theta}{2\pi}
        +
        d\mu_{\mathrm s}$
be a nontrivial probability measure on the unit circle, and let
$\{\alpha_n\}_{n=0}^{\infty}$ be its Verblunsky coefficients.  For
$m\ge1$, set
$H_m(e^{i\theta})
        =
        (1-\cos\theta)^m.$
The corresponding higher-order Szeg\H{o} integral is
\begin{equation}
\label{eq:Km-spectral-integral}
        \int_{0}^{2\pi}
        H_m(e^{i\theta})
        \log w(\theta)
        \frac{d\theta}{2\pi}.
\end{equation}
Depending on normalization, the finite-volume sum rule may be written either
with this quantity or with its nonnegative entropy normalization.  In this
paper we use the finite-volume normalization
$\mathcal K_m^{(N)}(\mu)$ from
Section~\ref{sec:yan-finite-volume}.  Under that normalization, the
coefficient-side lower bound proved in
Section~\ref{sec:absorption-coercive-estimate} is
$$\mathcal K_m^{(N)}(\mu)
        \ge
        c_m
        \sum_{n=0}^{N}|\Delta^m\alpha_n|^2
        +
        c_m
        \sum_{n=0}^{N}|\alpha_n|^{2m+2}
        -
        C_m.$$
The spectral finiteness assumption implies, by the finite-volume sum rule,
that
\begin{equation}
\label{eq:finite-volume-spectral-boundedness}
        \sup_{N\ge0}\mathcal K_m^{(N)}(\mu)<\infty.
\end{equation}

\begin{theorem}[Necessity for arbitrary $m$]
\label{thm:arbitrary-m-necessity}
Let $m\ge1$.  Assume that the higher-order Szeg\H{o} spectral quantity
associated with
$H_m(e^{i\theta})=(1-\cos\theta)^m$
is finite, in the sense that the finite-volume spectral quantities satisfy
\eqref{eq:finite-volume-spectral-boundedness}.  Then the Verblunsky
coefficients satisfy
\begin{equation}
\label{eq:necessity-main-conclusion}
        \Delta^m\alpha\in\ell^2,
        \,\,
        \alpha\in\ell^{2m+2}.
\end{equation}
Equivalently,
\begin{equation}
\label{eq:necessity-main-conclusion-expanded}
        \sum_{n=0}^{\infty}|\Delta^m\alpha_n|^2<\infty,
        \,\,
        \sum_{n=0}^{\infty}|\alpha_n|^{2m+2}<\infty.
\end{equation}
\end{theorem}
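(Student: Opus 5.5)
The plan is to deduce Theorem~\ref{thm:arbitrary-m-necessity} directly from the coercive finite-volume estimate, Theorem~\ref{thm:coercive-finite-volume-estimate}, combined with the uniform upper bound \eqref{eq:finite-volume-spectral-boundedness}. First, set $K:=\sup_{N\ge0}\mathcal K_m^{(N)}(\mu)$, which is finite by hypothesis. This finiteness is guaranteed by Theorem~\ref{thm:yan-finite-volume-sum-rule}, specifically \eqref{eq:yan-finite-volume-upper-bound}, whenever $\mathcal K_m(\mu)<\infty$; that covers the formulation of Theorem~\ref{thm:intro-main-necessity} as well. Then for every $N\ge0$, \eqref{eq:coercive-finite-volume-estimate} gives
\begin{equation*}
c_m\sum_{n=0}^{N}|\Delta^m\alpha_n|^2+c_m\sum_{n=0}^{N}|\alpha_n|^{2m+2}\le \mathcal K_m^{(N)}(\mu)+C_m\le K+C_m .
\end{equation*}
Since both sums on the left are nonnegative, each is bounded separately by $(K+C_m)/c_m$. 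This bound holds uniformly in $N$.

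Second, I pass to the limit. The partial sums $\sum_{n=0}^{N}|\Delta^m\alpha_n|^2$ and $\sum_{n=0}^{N}|\alpha_n|^{2m+2}$ are nondecreasing in $N$ and bounded above. By monotone convergence they converge, so $\sum_{n\ge0}|\Delta^m\alpha_n|^2\le (K+C_m)/c_m$ and $\sum_{n\ge0}|\alpha_n|^{2m+2}\le (K+C_m)/c_m$. This is exactly \eqref{eq:necessity-main-conclusion-expanded}, and hence \eqref{eq:necessity-main-conclusion}. No boundary conventions interfere here: $\Delta^m\alpha_n$ only involves $\alpha_n,\dots,\alpha_{n+m}$ with $n\ge0$, so no extension to negative indices enters the final sums.

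The only delicate point is that the constants $c_m>0$ and $C_m$ in Theorem~\ref{thm:coercive-finite-volume-estimate} must be genuinely independent of $N$ and of the particular measure beyond $|\alpha_n|<1$. I would check this by recalling how they arise. They come from the quadratic identity of Proposition~\ref{prop:quadratic-coercivity}, the pointwise logarithmic tail bound of Lemma~\ref{lem:logarithmic-tail}, the lower bound \eqref{eq:quartic-PSD-lower-section8} for the PSD quartic block, and the absorption bound of Theorem~\ref{thm:higher-critical-absorbable} with the fixed choice $\varepsilon=\tfrac12\min\{2^{-m},1/(m+1)\}$. All endpoint contributions are bounded using only $|\alpha_n|<1$ and the $m$-dependent shift range.

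Accepting these earlier results, the proof is a two-line monotone-limit argument. The real content, and the step I would regard as the main obstacle if it were not already available, lies upstream in the coercive estimate. There, the main difficulty is the absorption of the non-principal critical terms and the correct isolation of the quartic PSD block.
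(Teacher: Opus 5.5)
Your proposal is correct and follows the paper's own proof essentially verbatim. It combines the coercive estimate of Theorem~\ref{thm:coercive-finite-volume-estimate} with the uniform upper bound \eqref{eq:finite-volume-spectral-boundedness} to get $N$-uniform bounds on the two nonnegative partial sums, and then concludes by monotone convergence, with the $N$-independence of $c_m, C_m$ checked exactly as in the paper's remarks on boundary terms and finite shifts.
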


\subsection{Uniform finite-volume bounds}
\label{subsec:uniform-finite-volume-bounds}

By Theorem~\ref{thm:coercive-finite-volume-estimate}, there exist constants
$c_m>0$ and $C_m<\infty$, depending only on $m$, such that for every
$N\ge0$,
\begin{equation}
\label{eq:coercive-estimate-recalled-section9}
        \mathcal K_m^{(N)}(\mu)
        \ge
        c_m
        \sum_{n=0}^{N}|\Delta^m\alpha_n|^2
        +
        c_m
        \sum_{n=0}^{N}|\alpha_n|^{2m+2}
        -
        C_m.
\end{equation}
Assume now that the spectral side is finite.  By
\eqref{eq:finite-volume-spectral-boundedness}, there exists a finite
constant $C_m'$ such that
$\sup_{N\ge0}\mathcal K_m^{(N)}(\mu)
        \le
        C_m'.$
Combining this upper bound with
\eqref{eq:coercive-estimate-recalled-section9}, we obtain
\begin{equation}
\label{eq:uniform-partial-sum-bound}
        c_m
        \sum_{n=0}^{N}|\Delta^m\alpha_n|^2
        +
        c_m
        \sum_{n=0}^{N}|\alpha_n|^{2m+2}
        \le
        C_m+C_m'
\end{equation}
for every $N\ge0$.  Hence
\begin{equation}
\label{eq:uniform-two-bounds}
        \sup_{N\ge0}
        \sum_{n=0}^{N}|\Delta^m\alpha_n|^2
        <
        \infty,
        \,\,
        \sup_{N\ge0}
        \sum_{n=0}^{N}|\alpha_n|^{2m+2}
        <
        \infty.
\end{equation}
Since both summands are nonnegative, monotone convergence gives
\begin{equation}
\label{eq:monotone-convergence-conclusion}
        \sum_{n=0}^{\infty}|\Delta^m\alpha_n|^2
        <
        \infty,
        \,\,
        \sum_{n=0}^{\infty}|\alpha_n|^{2m+2}
        <
        \infty.
\end{equation}
This proves Theorem~\ref{thm:arbitrary-m-necessity}.

\subsection{Boundary terms and finite shifts}
\label{subsec:boundary-terms-finite-shifts-section9}

For completeness, we explain why the passage from finite volume to infinite
volume requires no additional limiting argument.  In the preceding sections,
local expressions were often replaced by equivalent expressions modulo
telescoping terms and bounded endpoint contributions.  These operations
produced errors of the form
$O_m(1),$
uniformly in $N$.  They were absorbed into the constant $C_m$ in
\eqref{eq:coercive-estimate-recalled-section9}.

Similarly, some estimates involved slightly enlarged intervals, such as
$[0,N+L_m]$.  This causes no difficulty.  Since
$|\alpha_n|<1$
for all Verblunsky coefficients, every fixed finite difference
$\Delta^r\alpha_n$ is uniformly bounded by a constant depending only on
$r$.  Hence adding or removing finitely many endpoint terms changes the
finite-volume estimates only by another uniformly bounded error.  Thus all
constants in the coercive estimate are independent of $N$.

\subsection{What has been proved}
\label{subsec:what-has-been-proved}

Combining the structural and analytic ingredients, the proof can be
summarized as follows.

First, Yan's finite-volume higher-order sum rule gives a decomposition of
the form
\begin{equation}
\label{eq:summary-decomposition-section9}
        \mathcal K_m^{(N)}(\mu)
        =
        Q_m^{(N)}(\alpha)
        +
        \mathcal P_m^{(N)}(\alpha)
        +
        R_m^{(N)}(\alpha)
        +
        \sum_{n=0}^{N}L_{m,n}
        +
        O_m(1).
\end{equation}
Here $Q_m^{(N)}$ is the quadratic critical term,
$\mathcal P_m^{(N)}$ is the quartic principal critical block obtained from
the Yan quotient-algebra representative, and $R_m^{(N)}$ is the remaining
non-principal critical remainder.

Second, the logarithmic tail satisfies
\begin{equation}
\label{eq:summary-log-tail-section9}
        \sum_{n=0}^{N}L_{m,n}
        \ge
        \frac1{m+1}
        \sum_{n=0}^{N}|\alpha_n|^{2m+2}.
\end{equation}
Third, the quadratic term is coercive:
\begin{equation}
\label{eq:summary-quadratic-section9}
        Q_m^{(N)}(\alpha)
        \ge
        2^{-m}
        \sum_{n=0}^{N}|\Delta^m\alpha_n|^2
        -
        C_m.
\end{equation}
Fourth, the quartic principal critical block is positive semidefinite, up to
endpoint errors:
\begin{equation}
\label{eq:summary-PSD-block-section9}
        \mathcal P_m^{(N)}(\alpha)
        \ge
        -C_m.
\end{equation}
This is precisely where the Yan representative is essential.  The raw
$(m-1,m-1)$-bihomogeneous Taylor component of $G_4^{\rm crit}$ need not
admit a PSD representative; the quotient-algebra normal representative does.

Fifth, the non-principal critical remainder is absorbable:
\begin{equation}
\label{eq:summary-remainder-section9}
        |R_m^{(N)}(\alpha)|
        \le
        \varepsilon
        \sum_{n=0}^{N+L_m}|\Delta^m\alpha_n|^2
        +
        \varepsilon
        \sum_{n=0}^{N+L_m}|\alpha_n|^{2m+2}
        +
        C_{\varepsilon,m}.
\end{equation}
Choosing $\varepsilon>0$ sufficiently small yields
\eqref{eq:coercive-estimate-recalled-section9}.  The spectral boundedness
\eqref{eq:finite-volume-spectral-boundedness} then implies the desired
coefficient-side summability.

Thus the necessity direction follows for every integer $m\ge1$:
$\mathcal K_m(\mu)<\infty
        \,\,\Longrightarrow\,\,
        \Delta^m\alpha\in\ell^2
        \ \text{ and }\
        \alpha\in\ell^{2m+2},$
with the understanding that $\mathcal K_m(\mu)<\infty$ denotes finiteness
of the spectral side in the normalization used in this paper.

\begin{remark}[Relation to the explicit low-order cases]
For $m=1,2,3$, the cancellations in the finite-volume densities can be
verified directly by expansion; see \cite{Piao-arxiv2026}.  These low-order
computations are not used in the proof of the present theorem.  Rather, they
are concrete manifestations of the general mechanism proved above.

The quadratic critical part gives the coercive difference energy
$\sum_n|\Delta^m\alpha_n|^2,$
while the constant part cancels the first $m$ Taylor terms of
$\log\frac1{1-|\alpha_n|^2},$
leaving a positive tail which controls
$\sum_n|\alpha_n|^{2m+2}.$
The quartic principal obstruction is handled by the PSD Gram representative
constructed from the Yan quotient-algebra representative in
Section~\ref{sec:absorption-coercive-estimate}.  After this PSD block is
separated, the remaining higher critical terms are controlled by the
diagonal-vanishing statement
$\mathcal Y^{(m)}_{k,\mathrm{crit}}\in I_k^{\,m+1-k},
        \,\,
        2\le k\le m.$
After applying the coefficient map, this means that every non-principal
degree $2k$ critical monomial contains at least $m+1-k$ discrete
differences.  This is exactly what is seen in the low-order cases: for
$m=2$, quartic terms contain one difference after the principal PSD block
is separated; for $m=3$, quartic terms contain two differences and sextic
terms contain one.  The present argument replaces those explicit
case-by-case cancellations by a uniform normal-form, PSD-block, and
interpolation-absorption argument valid for all $m$.
\end{remark}

\vskip2mm
\section*{Acknowledgments}
\vskip2mm
This work was supported by NSFC (No.11571327, 11971059).
\vskip2mm


\section*{Notation Index}

\renewcommand*{\arraystretch}{1.3}
\begin{longtable}{lp{11cm}}
$P$ & Forward shift: $(P\alpha)_n=\alpha_{n+1}$. \\
$\Delta$ & Discrete forward difference: $\Delta=P-1$, $(\Delta\alpha)_n=\alpha_{n+1}-\alpha_n$. \\
$\Delta^m\alpha$ & $m$-th forward difference: $(P-1)^m\alpha = \sum_{j=0}^m (-1)^{m-j}\binom{m}{j}\alpha_{n+j}$. \\
$P^{-1}$ & Backward shift. \\
\addlinespace
$H_m(e^{i\theta})$ & Weight function: $(1-\cos\theta)^m$, $m\ge1$. \\
$1-\cos\theta$ & Equals $\frac12|1-e^{i\theta}|^2$. \\
$\mathcal K_m(\mu)$ & Infinite-volume weighted Szeg\H{o} functional: $\int_0^{2\pi} (1-\cos\theta)^m \log w(\theta)\frac{d\theta}{2\pi}$ (sign as in text). \\
$\mathcal K_m^{(N)}(\mu)$ & Finite-volume spectral quantity in Yan's truncated sum rule, $N\ge0$. \\
\addlinespace
$\mathcal Y_k^{(m)}$ & Yan's homogeneous shift polynomial of total degree $2k$ for order $m$, $1\le k\le m$. \\
$\phi_{2k}$ & Coefficient map: sends a shift monomial to a local expression in $\alpha,\overline{\alpha}$. \\
$\bigl[\phi_{2k}(\mathcal Y_k^{(m)})\bigr]_n$ & Local density at index $n$ obtained from $\mathcal Y_k^{(m)}$ via $\phi_{2k}$. \\
$\mathcal Y_{k,\mathrm{const}}^{(m)}$ & Constant (diagonal) part: value of $\mathcal Y_k^{(m)}$ when all shift variables $=1$. \\
$\mathcal Y_{k,\mathrm{crit}}^{(m)}$ & Critical part: $\mathcal Y_k^{(m)}-\mathcal Y_{k,\mathrm{const}}^{(m)}$, vanishes on the diagonal. \\
$\operatorname{diag}\mathcal Y_k^{(m)}$ & Same as $\mathcal Y_{k,\mathrm{const}}^{(m)}$, scalar $-\frac1k$. \\
\addlinespace
$x_1,\dots,x_k$ & Formal shift variables acting on the $k$ holomorphic ($\alpha$) factors. \\
$y_1,\dots,y_k$ & Formal shift variables acting on the $k$ antiholomorphic ($\overline{\alpha}$) factors. \\
$\mathfrak I_k$ & Diagonal ideal generated by $x_1-1,\dots,x_k-1,y_1-1,\dots,y_k-1$ in the Laurent polynomial ring. \\
$\mathfrak I_k^q$ & $q$-th power of the diagonal ideal; vanishing to order at least $q$ on the diagonal. \\
$D_{x_\nu}=x_\nu\frac{\partial}{\partial x_\nu}$ & Euler derivative, used to detect diagonal vanishing. \\
\addlinespace
$\mathfrak T_m$ & Bounded telescoping class: sequences of the form $(P-1)B_n$ with $B$ a bounded local expression. \\
$O_m(1)$ & Uniformly bounded constant depending only on $m$; may vary from line to line. \\
$O_{\mathrm{fs},m}(1)$ & Finite-section remainder: uniformly bounded after summation over $0\le n\le N$. \\
$F\equiv G\pmod{\mathfrak T_m}$ & $F-G\in\mathfrak T_m$; finite sums differ by $O_m(1)$. \\
$L_m$ & Shift allowance depending only on $m$, used to absorb finite endpoint shifts. \\
\addlinespace
$\mathfrak R_m$ & Absorbable remainder class: local expressions whose finite sums are infinitesimally bounded by $\sum|\Delta^m\alpha|^2 + \sum|\alpha|^{2m+2}$. \\
$C_{\varepsilon,m}$ & Constant depending on $\varepsilon$ and $m$, independent of $N$. \\
\addlinespace
$A_{m,n}$ & Non-logarithmic coefficient-side density: $\sum_{k=1}^m [\phi_{2k}(\mathcal Y_k^{(m)})]_n$. \\
$L_{m,n}$ & Logarithmic tail: $\log\frac1{1-|\alpha_n|^2} - \sum_{k=1}^m\frac{|\alpha_n|^{2k}}{k} = \sum_{j=m+1}^\infty \frac{|\alpha_n|^{2j}}{j}$. \\
$Q_m^{(N)}(\alpha)$ & Quadratic critical contribution: $\sum_{n=0}^N [\phi_2(\mathcal Y_{1,\mathrm{crit}}^{(m)})]_n$. \\
$R_m^{(N)}(\alpha)$ & Higher critical contribution ($k\ge2$): $\sum_{k=2}^m\sum_{n=0}^N [\phi_{2k}(\mathcal Y_{k,\mathrm{crit}}^{(m)})]_n$. \\
$\mathcal P_m$ & Quartic principal critical block (positive semidefinite after Yan quotient-algebra normal form). \\
$F_m^{\mathrm{Yan}}$ & Lowest critical homogeneous part of the Yan representative of $G_4^{\mathrm{crit}}$, equal to $-\mathcal P_m$. \\
$G_{2k}$ & Homogeneous density of total degree $2k$ in the critical variables. \\
$G_{2k}^{\mathrm{crit}}$ & Critical part of $G_{2k}$ (vanishes on diagonal only to critical order). \\
\addlinespace
$\mathcal N_{2k,q}^{(m)}$ & Span of BSZ normal-form monomials: degree $2k$, exactly $q$ differences, bounded shifts. \\
$\mathcal N_{2k,\ge q}^{(m)}$ & Union (finite sum) of $\mathcal N_{2k,s}^{(m)}$ for $s\ge q$. \\
$\mathcal C_{m,k}$ & Critical BSZ class for $2\le k\le m$: $\mathcal N_{2k,\ge m+1-k}^{(m)}$. \\
$M_n$, $M_{\rho,n}$ & A normal-form monomial. \\
$a_\nu,b_\mu$ & Nonnegative integers indicating order of difference on each factor. \\
$\ell_\nu,r_\mu$ & Bounded shifts in normal-form monomials. \\
\addlinespace
$X_1,X_2,Y_1,Y_2$ & Linearized shift variables near diagonal: $x_j=1+X_j$, $y_j=1+Y_j$. \\
$Z_1,Z_2,Z_3$ & Alternative variables: $Z_1=X_1$, $Z_2=X_2$, $Z_3=Y_1$. \\
$u,v,t$ & Variables in the PSD block: $u=Y_1$, $v=Y_2$, $t=-X_2$. \\
$L_{\lambda,\mu}(Z)$ & Linear form appearing in the integral representation of $\mathcal P_m$. \\
$M_{2,m}^{\mathrm{PSD}}$ & Positive semidefinite Gram matrix for the quartic block, entries $M_{\alpha,\beta}^{(m)}$. \\
\addlinespace
$p_r$ & Interpolation exponent: $p_r=\frac{2(m+1)}{r+1}$, $0\le r\le m$. \\
$A_N$ & $\displaystyle \bigl(\sum_{n=0}^{N+L_m}|\Delta^m\alpha_n|^2\bigr)^{1/2}$. \\
$B_N$ & $\displaystyle \bigl(\sum_{n=0}^{N+L_m}|\alpha_n|^{2m+2}\bigr)^{1/(2m+2)}$. \\

\end{longtable}

\end{document}